\newtheorem*{notation}{Notation}
\newtheorem*{prop:reverse}{Proposition \ref{prop:generalized_reverse_sobolev}}
\newtheorem*{def:MGram}{Definition \ref{def:MGram}}
\definecolor{MYORANGE}{rgb}{1,0.5,0}
\crefname{hypothesis}{Hypothesis}{Hypotheses}
\newcommand{\ip}[2]{\left \langle #1,\ #2\right \rangle} 
\DeclareMathOperator*{\argmin}{argmin}
\def\projK{\mathcal{P}_K}
\def\projJ{\mathcal{P}_J}
\def\op{\textup{op}}
\title{Convergence of weak-SINDy Surrogate Models\thanks{This manuscript has been authored by UT-Battelle, LLC, under contract DE-AC05-00OR22725 with the US Department of Energy (DOE). The US government retains and the publisher, by accepting the work for publication, acknowledges that the US government retains a non-exclusive, paid-up, irrevocable, world-wide license to publish or reproduce the submitted manuscript version of this work, or allow others to do so, for US government purposes. DOE will provide public access to these results of federally sponsored research in accordance with the DOE Public Access Plan (http://energy.gov/downloads/doe-public-access-plan).
\funding{The work of B.~Russo was sponsored by the Laboratory Directed Research and Development Program of Oak Ridge National Laboratory, managed by UT-Battelle, LLC for the US Department of Energy under contract DE-AC05-00OR22725. The work of M.~P.~Laiu was supported by the Office of Advanced Scientific Computing Research and performed at the Oak Ridge National Laboratory, which is managed by UT-Battelle, LLC under Contract No. DE-AC05-00OR22725.
}}}
\author{Benjamin P. Russo\thanks{Mathematics in Computation Section, Computer Science and Mathematics Division, Oak Ridge National Laboratory, Oak Ridge, TN 37831, USA.
  (\email{russobp@ornl.gov}, \email{laiump@ornl.gov}).}
\and M. Paul Laiu\footnotemark[2]}
\begin{document}

\maketitle

\begin{abstract}
In this paper, we give an in-depth error analysis for surrogate models generated by a variant of the Sparse Identification of Nonlinear Dynamics (SINDy) method. 
We start with an overview of a variety of nonlinear system identification techniques, namely, SINDy, weak-SINDy, and the occupation kernel method.  Under the assumption that the dynamics are a finite linear combination of a set of basis functions, these methods establish a linear system to recover coefficients. We illuminate the structural similarities between these techniques and establish a projection property for the weak-SINDy technique.  Following the overview, we analyze the error of surrogate models generated by a simplified version of weak-SINDy. In particular, under the assumption of boundedness of a composition operator given by the solution, we show that (i) the surrogate dynamics converges towards the true dynamics and (ii) the solution of the surrogate model is reasonably close to the true solution. Finally, as an application, we discuss the use of a combination of weak-SINDy surrogate modeling and proper orthogonal decomposition (POD) to build a surrogate model for partial differential equations (PDEs).
\end{abstract}

\begin{keywords}
surrogate modeling, system identification, proper orthogonal decomposition, error estimates 
\end{keywords}

\begin{MSCcodes}
37M10, 62J99, 62-07, 65L60, 41A10
\end{MSCcodes}

\section{Introduction}
Dynamical systems have an important position in science and engineering as a way to describe the evolution of a system in a quantifiable way. Occasionally, a system may have some unknown parameters or be completely unknown. In this setting, system identification techniques are leveraged to identify the system. If the dynamics are suspected to be linear, then a variety of techniques exist to identify the dynamics via the Fourier transform or Laplace transform \cite{ljung1998system}. However, when the dynamics are expected to be nonlinear, these techniques do not apply. To combat this, a variety of nonlinear identification techniques have been developed, including the SINDy technique \cite{brunton2016discovering}. 

SINDy (Sparse Identification of Nonlinear Dynamics) essentially makes two fundamental assumptions. One is that the dynamics can be represented as a finite linear combination of functions and the other is that this representation is sparse. Under these assumptions, the problem of system identification is reduced to a parameter estimation/identification problem. As a data-driven technique, SINDy uses pointwise data from the system to create a linear system which describes the dynamics at the chosen points. This linear system is then solved using sparsity enforcing techniques, such as LASSO \cite{tibshirani1996regression, hastie2009elements} and sequentially thresholded least-squares \cite{brunton2016discovering, convergence_of_SINDy}. 

One drawback to the SINDy technique is that the time derivative of system states is needed to build the linear system. These time derivatives often need to be estimated from the system states, which is untenable in the presence of noise. A common approach to address this issue is to apply low-pass filters to reduce the noise. However, it is known \cite{filters} that low-pass filtering does not apply in certain situations such as handling EEG data \cite{EEG} in neuroscience or turbulence data \cite{smagorinsky1963general} in computational fluid dynamics, where high-frequency data is present. 
Hence, a technique which is robust to noise is preferable in these situations. Integral formulations of SINDy, such as the weak-SINDy technique developed by Messenger and Bortz \cite{messenger2021weak, messenger2021weakpde} and the Occupation Kernel techniques developed by Rosenfeld et~al.~\cite{rosenfeld2019occupation, SCC.Rosenfeld.Kamalapurkar.ea2019a}, are naturally robust to noise as the derivative data is accessed in a circuitous fashion via integral equations. Weak-SINDy utilizes test functions to access the derivative data via integration by parts. Similarly, the Occupation Kernel technique uses test functions to access the derivative data via the fundamental theorem of calculus. 

As stated above, a fundamental assumption of these nonlinear system identification techniques is that the dynamics are describable by a \emph{chosen} finite basis of functions. However, if the dynamics are truly unknown, a practitioner is left to guess or use domain knowledge to choose a reasonable basis. Additionally, one may not be interested in identifying a system but in reducing a known but computationally expensive system to an inexpensive model which retains some properties of the original system. As the only required inputs for these techniques are choices of basis and system data, it is clear that these techniques will return \emph{some} model, but the relevancy of this model to the underlying system is unclear.

This paper asserts in the case of weak-SINDy that if a reasonable basis, composed of functions known to be dense in an underlying Hilbert space, is chosen, then the returned system is a good approximation of the original and has solutions reasonably close to the original. Section~\ref{sec:A general formulation of SINDy-type techniques} gives an overview of SINDy-type techniques, illuminates a projection property of weak-SINDy, and exemplifies the structural similarities between these SINDy-type techniques giving possible avenues for extensions of this work. In Section~\ref{sec:ODE case}, we give an in-depth error analysis of the surrogate models generated by a simplified version of weak-SINDy. In particular, in the scalar ordinary differential equation (ODE) case, we show that (i) the surrogate dynamics converges towards the true dynamics and (ii) the solution of the surrogate model is reasonably close to the true solution, under the assumption of a bounded composition operator. In Section~\ref{sec:system of odes}, this error analysis is extended to systems of ODEs, and the results are applied to identify a surrogate ODE system over coordinates defined by proper orthogonal decomposition (POD) for solutions to partial differential equations (PDEs), similar to the approach taken in \cite{qian2022reduced}. Section~\ref{sec:Numerical Experiments} contains a variety of numerical examples exemplifying the main results of this paper.

\section{A general formulation of SINDy-type techniques}\label{sec:A general formulation of SINDy-type techniques}
In this section, we review three SINDy-type techniques (Section~\ref{sec:SINDy_review}), provide a unified formulation for these techniques (Section~\ref{sec:general_formulation}), and analyze the projection property of the weak-SINDy method (Section~\ref{sec:projection}).

\subsection{Review of three SINDy-type techniques}
\label{sec:SINDy_review}
We begin with an overview of three SINDy-type techniques, including the original SINDy method \cite{brunton2016discovering}. These system identification techniques aim to identify an underlying dynamical system
$\dot{x}= f(x(t))$ from measured or simulated solution data $x$ in a time interval $[a,b]$. Here $\dot{x}$ denotes the time derivative of $x$, which will be used throughout this paper.
An overarching assumption in these techniques is that $f$ is assumed to be a linear combination of a finite set of functions $\{\varphi_j\}_{j=0}^J$, i.e.,
\[\textstyle f(\cdot) = \sum_{j=0}^J w_j\varphi_j(\cdot), \quad w_j \in \mathbb{R}, \quad j=0,\dots,J.\]
The SINDy-type technique then estimate the weights $\{w_j\}_{j=0}^J$ from data that contains (part of) the solution $x$ to the underlying system.

\paragraph{SINDy}
Given $\{x(t_k)\}_{k=0}^K$ at time steps $\{t_k\}_{k=0}^K$, the original SINDy method \cite{brunton2016discovering} seeks weights $\{w_j\}_{j=0}^J$ that satisfy
\[\textstyle\dot{x}(t_k) = \sum_{j=0}^J w_j \varphi_j(x(t_k)),\quad k=0,\dots,K,\]
which is then formulated as a linear system
\begin{equation}\label{eq:SINDy}
    \bm{b} = \bm{G}\bm{w} \quad\text{with entries}\quad
 \bm{b}_k = \dot{x}(t_k),\quad
 \bm{G}_{k,j} = \varphi_j(x(t_k)),\quad\text{and}\quad
 \bm{w}_j = w_j.
\end{equation}

Additionally, the SINDy method makes a second fundamental assumption that the underlying dynamics are sparsely represented in the correct choice of basis. Hence the above linear system can be solved using a sparsity enforcing solver such as LASSO. 

Astute readers will realize that the vector $\bm{b}$ is populated with the derivative data $\dot{x}(t)$ at the prescribed time-steps. Ideally, the derivative data is measured, but in many cases it must be estimated using a variety of techniques. In the presence of measurement noise, this becomes increasingly harder to do. To circumvent this obstruction, integral formulations of SINDy have been created. Two techniques to the author's knowledge are Messenger and Bortz's weak formulation \cite{messenger2021weak, messenger2021weakpde} and Rosenfeld et~al.'s operator theoretic technique \cite{SCC.Rosenfeld.Kamalapurkar.ea2019a, rosenfeld2019occupation}, which we introduce below.  

\paragraph{weak-SINDy}
The weak-SINDy method proposed in \cite{messenger2021weak, messenger2021weakpde} introduces a class of \emph{test} functions $\{\psi_k\}_{k=0}^K$ in addition the basis functions $\{\varphi_j\}_{j=0}^J$ in the SINDy method. Throughout this paper, we refer to $\{\varphi_j\}_{j=0}^J$ as a \emph{projection basis} and $\{\psi_k\}_{k=0}^K$ as a \emph{test function basis}. 

Given $x(t)$, $\forall t\in[a,b]$, the weak-SINDy method finds the weights $\{w_j\}_{j=0}^J$ by solving
\begin{equation}\label{eq:wSINDy}
\textstyle
    \ip{\dot{x}}{\psi_k} = \ip{\sum_{j=0}^J w_j\varphi_j(x)}{\psi_k},\quad k=0,\dots,K,
\end{equation}   
where $\ip{\cdot}{\cdot}$ denotes the $L^2$ inner product on $[a,b]$.
In matrix form, Eq.~\eqref{eq:wSINDy} is written as 
\begin{equation}
\label{weak-SINDy_form}
\bm{b} = \bm{G} \bm{w}\quad\text{with entries}\quad
 \bm{b}_k = \ip{\dot{x}}{\psi_k}\quad\text{and}\quad
 \bm{G}_{k,j} = \ip{\varphi_j(x)}{\psi_k},
\end{equation}
where the entries of $\bm{b}$ can be computed using integration by parts, i.e.,
\begin{equation}\label{eq:integration_by_parts}
\textstyle
\ip{\dot{x}}{\psi_k} = -\ip{x}{\dot{\psi_k}},
\end{equation}
under the assumption that $\{\psi_k\}_{k=0}^K$ have compact support in $[a,b]$.
Therefore, the weak-SINDy method avoids direct computations of $\dot{x}$ by invoking $\dot{\psi_k}$, the derivative of the test function, which is often known a priori.

\begin{remark} While not the main focus of the paper, it should be noted that inaccuracies in the numerical approximations of the inner products will perturb both $\bm{G}$ and $\bm{b}$ and affect the final weights $\bm{w}$. For a reference on peturbed least squares problems see \cite{WEI1990177}.
\end{remark}

\paragraph{Occupation kernel method}
The basic form of the operator-theoretic technique developed in \cite{rosenfeld2019occupation, SCC.Rosenfeld.Kamalapurkar.ea2019a} is similar to weak-SINDy. 
Given $x(t)$, $\forall t\in[a,b]$ and a set of test functions $\{\psi_k\}_{k=0}^K$, the occupation kernel method solves
\begin{equation}\label{eq:occ_kernel}
\textstyle
  \ip{\dot{\psi_k}(x)}{\dot{x}} = \ip{\dot{\psi_k}(x)}{\sum_{j=0}^J w_j\varphi_j(x)}\:,\quad k=0,\dots,K,
\end{equation}
where the left-hand side can be reformulated by
\begin{equation}
\textstyle
  \ip{\dot{\psi_k}(x)}{\dot{x}} = \int_a^b \dot{\psi}_k(x(t)) \dot{x} \,dt = \int^b_a \frac{d}{dt}\left(\psi_k(x(t)\right) dt  = \psi_k(x(b))-\psi_k(x(a))\:.
\end{equation}
Thus, the matrix formulation of Eq.~\eqref{eq:occ_kernel} becomes
\begin{equation}
\label{occkernel_form}
\bm{b} = \bm{G} \bm{w}\quad\text{with entries}\quad
\bm{b}_k = \psi_k(x(b))-\psi_k(x(a))\quad\text{and}\quad
\bm{G}_{k,j} =\ip{\varphi_j(x)}{\dot{\psi_k}(x)}, 
\end{equation}
which also avoid direct measurements or evaluations of $\dot{x}$.

It should be noted that in \cite{rosenfeld2019occupation} the test basis is chosen from a particular class of functions called reproducing kernels and the method is framed in the language of operators (see Section~\ref{sec:general_formulation}).

\subsection{A general class of techniques}
\label{sec:general_formulation}
In this section, we point out the structural similarities between the three techniques discussed in Section~\ref{sec:SINDy_review} by taking a Hilbert-space theory approach. Although this paper primarily focuses on the weak-SINDy method, the formulation presented in this section is expected to illuminate ways in which the primary analysis can be extended. In general, the SINDy-type techniques resolve the dynamics over a finite dimensional subspace of a Hilbert function space generated by the test functions, and system identification is achieved via enforcing a rule over this finite dimensional space. The weak-SINDy technique most obviously exemplifies this point. The original SINDy method and the occupation kernel method can be cast in this light as well. 

\begin{definition}
A (real) Hilbert function space over a domain $X$ is a Hilbert space of functions $f:X\rightarrow \mathbb{R}^n$. A reproducing kernel Hilbert space is a Hilbert function space $H$ in which the evaluation functional, $E_x(f) :=f(x)$, is continuous. By the Riesz representation theorem, there exists functions $K_x$, called reproducing kernels, such that $f(x)=\ip{f}{K_x}_H$ for all $f\in H$. 
\end{definition}
For a good reference on reproducing kernel Hilbert spaces consult \cite{paulsen2016introduction}.

Original SINDy has the same form as weak-SINDy (Eq.~\eqref{weak-SINDy_form}), under the assumptions that $x$, $\dot{x}$, and $\varphi_j(x)$ lie in the same reproducing kernel Hilbert space and that the test functions are chosen to be the reproducing kernels at the time-steps $t_k$. Thus,
\[\ip{\varphi_j(x(t))}{\psi_k}_H = \ip{\varphi\circ x }{K_{t_k}}_H = \varphi_j(x(t_k)).\] 
It should be pointed out that the test functions could have also been chosen to be Dirac delta distributions. However, this paper takes the view point that the test functions will form a finite dimensional subspace of the underlying Hilbert function space such that Hilbert space theory applies to the analysis. The theoretical sacrifice made here is the assumption that the solution and the basis functions lie in a reproducing kernel Hilbert space which equates to the assumption of some increased regularity. 

Rosenfeld et al.~also introduce theory along side their technique using the language of reproducing kernel Hilbert spaces. We include part of the theory here to further illustrate the similarities between these techniques. In \cite{rosenfeld2019occupation, SCC.Rosenfeld.Kamalapurkar.ea2019a}, solutions (considered as trajectories) are encoded as occupation kernels. 

\begin{definition}
Let $H$ be a reproducing kernel Hilbert space in which the linear functional 
\[g\mapsto \int_{a}^b g(x(t)) dt \] is bounded. By Riesz representation theorem, there exists a function $\Gamma_x\in H$ such that 
\[\int_{a}^b g(x(t)) dt = \ip{g}{\Gamma_x}.\]
We will call $\Gamma_x$ the occupation kernel for trajectory $x$. 
\end{definition}
Intuitively, we can think of occupation kernels as the output of a feature map which takes trajectories to a function in a Hilbert space. The action of the dynamics are encoded as an operator called the Liouville operator in a similar fashion to which dynamics are encoded as Koopman/composition operators in Dynamic Mode Decomposition (DMD) \cite{kutz2016dynamic, rosenfeld2022dynamic}. 
\begin{definition}
The Liouville operator on $H$ with symbol $f$ is given by 
$A_f(g)= \dot{g} \cdot f$.
\end{definition}
Under this viewpoint, $\int_a^b \dot{g}(x(t)) f(x(t)) dt = \ip{A_f(g)}{\Gamma_x}_H$. Therefore, the entries in the linear system Eq.~\eqref{occkernel_form} can be written as
\begin{equation}
\bm{b}_k = \ip{\psi_k}{A^*_f(\Gamma_x)},\quad\text{and}\quad
\bm{G}_{k,j}=\ip{\psi_k}{A_{\varphi_j}^*(\Gamma_x)}.
\end{equation}

A few points should be made at this stage. Generally, $A_f$ is a densely defined operator and the properties of the symbol influence the properties of the operator, see \cite{russo2022liouville}. Secondly, under this view point, the success of Rosenfeld et al.~method is tied to how similar the functions $A_f^*(\Gamma_x)$ and $A^*_{\sum_j w_j\varphi_j}(\Gamma_x)$ on the finite dimensional space generated by the test functions $\psi_k$.  Finally, the entries in Eq.~\eqref{weak-SINDy_form} can also be written in an operator form by invoking a composition operator $C_x(f) := f\circ x$, i.e., 
\begin{equation}
 \bm{b}_k = \ip{C_x(f)}{\psi_k},\quad\text{and}\quad
 \bm{G}_{j,k} = \ip{C_{x}(\varphi_j)}{\psi_k}.
\end{equation}

Even though this paper does not take full advantage of this operator formulation, we expect the operator theoretic viewpoints of the SINDy-type techniques to open avenues for further analysis.

\subsection{The projection property of the weak-SINDy technique}
\label{sec:projection}
In the context of system identification, the techniques presented in Section~\ref{sec:SINDy_review} are shown to be successful when the dynamics can be written as a linear combination of the choice of basis functions. However, this assumption does not hold in general when generating a surrogate model for complex systems with these techniques. In this section, we show that the weak-SINDy technique acts as a projection operator that projects the underlying dynamics to the space spanned by the chosen basis. 

As shown above, the weak-SINDy technique solves functional equations represented by a linear system Eq.~\eqref{eq:wSINDy} and then returns a linear combination of basis functions. 
Here we separate the weak-SINDy technique into three steps, referred to as the encoding, solving, and decoding steps, and formalize these steps as three mappings which we will call the \emph{encoder}, \emph{solver}, and \emph{decoder} mappings, respectively.
In the following definition, we will give precise descriptions of the space of the basis and test functions and define each mapping under the choice of basis and test functions.

\begin{definition}\label{def:projection_operators}
Let $H$ be a Hilbert function space on a set $X$ and $\{\varphi_j\}_{j=0}^J$ be a finite collection of functions in $H$. Let $\mathcal{H}$ be a Hilbert function space on a set $[a,b]$ and
$\mathcal{F} = \{\psi_k\}_{k=0}^K$ be a finite subset of basis functions in $\mathcal{H}$. Suppose that $x:[a,b]\rightarrow X$ defines a bounded composition operator $C_x:H\rightarrow \mathcal{H}$. We then define the following maps:
\begin{equation}\label{eq:operator_def}
    \begin{alignedat}{3}
&E:H \rightarrow \mathbb{R}^{K+1},  \quad 
&&E(g)= \left[ \ip{g(x)}{\psi_0}_\mathcal{H}\ldots \ip{g(x)}{\psi_K}_\mathcal{H}\right]^\top,\\
&S:\mathbb{R}^{K+1}\rightarrow \mathbb{R}^{J+1}, \quad 
&&S(\bm{b})=\argmin_{\bm{w}\in \mathbb{R}^{J+1}}\|\bm{G}\bm{w}-\bm{b}\| + \lambda\|\bm{w}\|,\\
&D:\mathbb{R}^{J+1} \rightarrow H,  \quad 
&&D(\bm{w})=\textstyle\sum_{j=0}^J w_j\varphi_j(\cdot), 
    \end{alignedat}
\end{equation}
where the entries of the matrix $\bm{G}\in \mathbb{R}^{K+1\times J+1}$ are given by $G_{k,j} = \ip{\varphi_j(x)}{\psi_k}_\mathcal{H}$ and $\lambda\geq 0$ is a regularization parameter.
We will refer to the maps $E$, $S$, and $D$ as the \emph{encoder}, \emph{solver}, and \emph{decoder} maps, respectively. 
\end{definition}
We note that the solver map $S$ is well-defined only if the problem $\min_{\bm{w}\in \mathbb{R}^{J+1}}\|\bm{G}\bm{w}-\bm{b}\| + \lambda\|\bm{w}\|$ has a unique minimizer. This requirement is usually satisfied in practice as $K$ is often chosen to be much greater than $J$. We assume that $S$ is well-defined throughout this paper.

\begin{lemma}\label{lem:solver_projection}
Let $S$ be the solver map defined in Definition~\ref{def:projection_operators}.
Let $\hat{\bm{w}} = S(\bm{b})$ for some $\bm{b}\in\mathbb{R}^{K+1}$, then $S(\bm{G}\hat{\bm{w}}) = \hat{\bm{w}} $.
\end{lemma}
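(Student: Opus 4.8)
The plan is to exploit that $\hat{\bm{w}}$ is an optimal candidate for the problem defining $S(\bm{G}\hat{\bm{w}})$ and to transfer optimality back and forth using the triangle inequality for $\|\cdot\|$, together with the standing assumption that $S$ is well-defined (so that each relevant minimization has a \emph{unique} minimizer).

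First I would set up notation. Write $f(\bm{w}) = \|\bm{G}\bm{w}-\bm{b}\| + \lambda\|\bm{w}\|$ for the objective whose unique minimizer is $\hat{\bm{w}} = S(\bm{b})$, and $g(\bm{w}) = \|\bm{G}\bm{w}-\bm{G}\hat{\bm{w}}\| + \lambda\|\bm{w}\|$ for the objective whose unique minimizer is $\bm{w}^\star := S(\bm{G}\hat{\bm{w}})$; set $r := \|\bm{G}\hat{\bm{w}}-\bm{b}\|$. Then record two elementary facts: (a) the residual term of $g$ vanishes at $\hat{\bm{w}}$, so $g(\hat{\bm{w}}) = \lambda\|\hat{\bm{w}}\|$; and (b) for every $\bm{w}$, the triangle inequality $\|\bm{G}\bm{w}-\bm{b}\| \le \|\bm{G}\bm{w}-\bm{G}\hat{\bm{w}}\| + r$ gives $f(\bm{w}) \le g(\bm{w}) + r$.

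Next I would chain these together. Using (b) at $\bm{w}^\star$, then optimality of $\bm{w}^\star$ for $g$, then (a), and finally $f(\hat{\bm{w}}) = r + \lambda\|\hat{\bm{w}}\|$, one gets
\[
f(\bm{w}^\star) \;\le\; g(\bm{w}^\star) + r \;\le\; g(\hat{\bm{w}}) + r \;=\; \lambda\|\hat{\bm{w}}\| + r \;=\; f(\hat{\bm{w}}).
\]
Since $\hat{\bm{w}}$ minimizes $f$, we also have $f(\hat{\bm{w}}) \le f(\bm{w}^\star)$, so all inequalities are equalities and $f(\bm{w}^\star) = f(\hat{\bm{w}})$ is the minimum value of $f$. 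Because $f$ has a unique minimizer, $\bm{w}^\star = \hat{\bm{w}}$, i.e., $S(\bm{G}\hat{\bm{w}}) = \hat{\bm{w}}$.

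I do not expect a real obstacle here; the only points needing care are (i) invoking well-definedness of $S$ twice — once so that $\hat{\bm{w}}$ and $\bm{w}^\star$ are genuine points, and once to pass from equality of objective values to equality of minimizers — and (ii) checking that the argument still works when $\lambda = 0$, which it does, since we rely on the uniqueness assumption rather than on strict convexity of the objective.
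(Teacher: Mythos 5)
Your proof is correct and uses essentially the same argument as the paper: the same triangle inequality $\|\bm{G}\bm{w}-\bm{b}\|\le\|\bm{G}\bm{w}-\bm{G}\hat{\bm{w}}\|+\|\bm{G}\hat{\bm{w}}-\bm{b}\|$ transfers optimality between the two objectives, with uniqueness of the minimizer closing the argument. The only difference is presentational — you run the inequality chain directly and conclude from equality of objective values, whereas the paper phrases it as a proof by contradiction.
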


\begin{proof}
By contradiction, suppose $S(\bm{G}\hat{\bm{w}}) = \bm{w}^*\neq \hat{\bm{w}}$, then, since $S$ is well-defined,
\begin{equation}
\begin{alignedat}{2}
    \|\bm{G}\bm{w}^* - \bm{G}\hat{\bm{w}}\| + \lambda\|\bm{w}^*\| 
    &< 
    \|\bm{G}\hat{\bm{w}} - \bm{G}\hat{\bm{w}}\| + \lambda\|\hat{\bm{w}}\|\\
    \Rightarrow\quad
    \|\bm{G} (\bm{w}^* - \hat{\bm{w}})\| + \lambda\|\bm{w}^*\| 
    &< 
    \lambda\|\hat{\bm{w}}\|
\end{alignedat}
\end{equation}
Thus, by the triangle inequality,
\begin{equation}
\begin{alignedat}{2}
    \|\bm{G}\bm{w}^* - {\bm{b}}\| + \lambda\|\bm{w}^*\| &\leq 
    \|\bm{G}\hat{\bm{w}} - {\bm{b}}\| + \|\bm{G} (\bm{w}^* - \hat{\bm{w}})\| + \lambda\|\bm{w}^*\| \\
    &<\|\bm{G}\hat{\bm{w}} - {\bm{b}}\| + \lambda\|\hat{\bm{w}}\|,
\end{alignedat}
\end{equation}
which contradicts to the assumption that $\hat{\bm{w}}=S(\bm{b})$.
\end{proof}
Here we do not specify the choice of the norms in the fidelity term $\|\bm{G}\bm{w}-\bm{b}\|$ and the regularization term $\lambda\|\bm{w}\|$ in the solver. In fact, the result in Lemma~\ref{lem:solver_projection} holds for arbitrary norms in $\mathbb{R}^{K+1}$ and $\mathbb{R}^{J+1}$ as shown in the above proof. 

With Lemma~\ref{lem:solver_projection}, we next show in Proposition~\ref{prop:projection} that the weak-SINDy technique is indeed a projection of the underlying dynamics to the space spanned by the basis function.
\begin{proposition}\label{prop:projection}
Let $V = \operatorname{span}\{\varphi_j\}_{j=0}^J\subset H$. Using the notation in Definition~\ref{def:projection_operators}, the mapping defined by
\[P:H\rightarrow V\subset H \quad \quad  P = D\circ S\circ E\]
is a well-defined projection map. 
\end{proposition}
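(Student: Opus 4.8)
The goal is to show that $P = D \circ S \circ E$ is a well-defined projection, i.e., $P$ is well-defined (its image lies in $V$), and $P^2 = P$. Let me sketch the plan.

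Let me think about well-definedness first. $E : H \to \mathbb{R}^{K+1}$ is clearly well-defined since $C_x$ is bounded, so $g(x) = C_x(g) \in \mathcal{H}$ and the inner products make sense. $S$ is assumed well-defined (stated after the definition). $D$ is clearly well-defined. So $P$ is well-defined as a composition, and its image is $D(\mathbb{R}^{J+1}) = \operatorname{span}\{\varphi_j\} = V$. So the "well-defined projection map" claim reduces to showing idempotency $P \circ P = P$.

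Now for idempotency. Take any $g \in H$. Write $\hat{\bm{w}} = S(E(g))$, so $P(g) = D(\hat{\bm{w}}) = \sum_j \hat w_j \varphi_j$. I need $P(P(g)) = P(g)$, equivalently $S(E(D(\hat{\bm{w}}))) = \hat{\bm{w}}$ (then apply $D$ to both sides). The key observation is that $E \circ D$ acts as multiplication by $\bm{G}$: for any $\bm{w} \in \mathbb{R}^{J+1}$,
\[
E(D(\bm{w}))_k = \ip{\big(\textstyle\sum_j w_j \varphi_j\big)(x)}{\psi_k}_{\mathcal H}
= \textstyle\sum_j w_j \ip{\varphi_j(x)}{\psi_k}_{\mathcal H}
= \textstyle\sum_j \bm{G}_{k,j} w_j = (\bm{G}\bm{w})_k,
\]
where the second equality uses linearity of the composition operator $C_x$ (so that $(\sum_j w_j\varphi_j)(x) = \sum_j w_j\, \varphi_j(x)$ in $\mathcal H$) together with linearity and continuity of the inner product. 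Hence $E \circ D = \bm{G}\,\cdot$. Therefore $S(E(D(\hat{\bm{w}}))) = S(\bm{G}\hat{\bm{w}})$, and by Lemma~\ref{lem:solver_projection} this equals $\hat{\bm{w}}$ since $\hat{\bm{w}} = S(E(g))$ is itself in the image of $S$. Applying $D$ gives $P(P(g)) = D(\hat{\bm{w}}) = P(g)$, as desired.

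The main (and only mildly delicate) point is the identity $E \circ D = \bm{G}\,\cdot\,$, which hinges on the linearity of the composition operator $C_x$ — that is, evaluating a linear combination of basis functions along the trajectory $x$ and then pairing with a test function is the same as assembling the Gram-type matrix $\bm{G}$ and multiplying. Everything else is bookkeeping: once that identity is in hand, Lemma~\ref{lem:solver_projection} does the real work. I would present the proof in three short beats: (1) note image of $P$ is $V$ so $P$ is well-defined; (2) establish $E \circ D = \bm{G}\,\cdot$; (3) combine with Lemma~\ref{lem:solver_projection} to conclude $P^2 = P$.
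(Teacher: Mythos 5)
Your proposal is correct and follows essentially the same route as the paper: both compute that $E$ applied to $P(F)=\sum_j\hat w_j\varphi_j$ yields $\bm{G}\hat{\bm{w}}$ by linearity, and then invoke Lemma~\ref{lem:solver_projection} to conclude $S(\bm{G}\hat{\bm{w}})=\hat{\bm{w}}$ and hence $P^2=P$. Your explicit isolation of the identity $E\circ D=\bm{G}\,\cdot$ and the remark on well-definedness are minor presentational additions, not a different argument.
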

\begin{proof}
We show that $P$ is a projection map by proving that $P^2 = P$. Let $F\in H$ and suppose $P(F) = (D \circ S \circ E)(F)=\sum_{j=0}^J \hat{w}_j \varphi_j(\cdot)$ for some $\hat{\bm{w}}\in\mathbb{R}^{J+1}$. From the definition of the solver operator $S$ in Eq.~\eqref{eq:operator_def}, $\hat{\bm{w}}=S(E(F))$.
Applying $E$ to $P(F)$ gives
\[\textstyle
(E\circ P)(F) = \left[\ip{\sum_{j=0}^J \hat{w}_j \varphi_j(x)}{\psi_0}_\mathcal{H}, \ldots, \ip{\sum_{j=0}^J \hat{w}_j\varphi_j(x)}{\psi_K}_\mathcal{H}\right]^\top=\bm{G}\hat{\bm{w}}.\]
It then follows from Lemma~\ref{lem:solver_projection} that
\begin{equation}
    (S\circ E \circ P)(F) = S(\bm{G}\hat{\bm{w}}) = \hat{\bm{w}}\:.
\end{equation}
Thus, we have
\begin{equation}
    P^2(F) = (D\circ S\circ E \circ P)(F) = D(\hat{\bm{w}}) = \sum_{j=0}^J \hat{w}_j \varphi_j(\cdot) = P(F)\:.
\end{equation}
\end{proof}

\subsection{weak-SINDy in the context of this paper}
\label{subsec:assumption}
The SINDy-type methods have been shown to be very efficient at system identification under the assumption that the underlying dynamics is a linear combination of the chosen finite set of basis functions. In this paper, we will abandon this assumption and rephrase the SINDy-type methods in the context of surrogate modeling. For system $\dot{x}=f(x)$, if $f$ is \emph{not} in the span of the chosen finite set of basis functions $\varphi_j$, the SINDy-type methods will return a model, which is shown in the preceding section to be a projection of the underlying dynamics in the case of weak-SINDy. In this section, we study the viability of the surrogate model and investigate the difference between the solution to the projected surrogate model and the original solution. 

For the remainder of the paper, we focus on the weak-SINDy method, which is the most amenable to Hilbert space analysis techniques. We also make several simplifying assumptions. Namely, we assume: 
\begin{enumerate}[(i)]
    \item no regularization is used in solving the least squares problem 
    Eq.~\eqref{weak-SINDy_form}, 
    \item the chosen set of basis functions $\{\varphi_j\}_{j=0}^J$ spans $\mathbb{P}_J$, where $\mathbb{P}_J$ is the space of polynomials up to degree $J$ over a domain,
    \item the test functions form an orthonormal basis,
    \item and the matrix $\bm{G}$ in Eq.~\eqref{weak-SINDy_form} is full column rank. Together with assumptions (ii) and (iii), this assumption is effectively an assumption on the trajectory $x(t)$.
\end{enumerate}
We note that, unlike \cite{messenger2021weak}, we do not assume in the analysis that the test functions have compact support in the time interval. In this case, data at the endpoints are needed for integration by parts. In addition, the choice of polynomial basis functions allows us to leverage well-known polynomial approximation properties in the analysis. Finally, as portions of this paper contain multivariable results, we define the notion of ``degree" used for multi-variable polynomials in this paper. 

\begin{definition}[Degree of a multi-variable polynomial]
Let $X\subset \mathbb{R}^N$. We say a multi-variable polynomial is of \emph{max degree} $J$ if all variables are of degree $J$ or less. We define
\begin{equation}\textstyle
    \mathbb{P}_J(X) =\left\{ p(\mathbf{x})= \sum_{\bm{j}} \alpha_{\bm{j}} \mathbf{x}^{\bm{j}} \ :\  \mathbf{x}\in X\:,\,\bm{j} = (j_1,\ldots, j_N)\:,\,\max_\ell j_\ell \leq J \right\}\:,
\end{equation}
where $J$ is the max degree. For convenience, $|\bm{j}|_\infty:=\max_\ell j_\ell$.
\end{definition}

\section{Analysis of weak-SINDy approximation to scalar ODEs}\label{sec:ODE case}
In this section, we analyze the weak-SINDy method in the case of approximating a scalar ODE
\begin{equation}\label{eq:ode}
\dot{x}(t) = f(x(t)),\quad x(a) = \eta\:,\quad \text{for } t\in[a,b]\:,
\end{equation}
where $f:X\to \mathbb{R}$ is the dynamics with $X$ the range of $x$, which for this paper we assume is bounded.
We start from the following proposition, which bounds the difference between the approximate and original solutions by the error in the approximate system. 
\begin{proposition}\label{prop:ODE_sol_bound}
Suppose $x:[a,b]\rightarrow X\subseteq\mathbb{R}$ on some compact region $[a,b]$ satisfies the ODE Eq.~\eqref{eq:ode}. Let $\hat{x}:[a,b]\rightarrow X$ satisfy
\begin{equation}\label{eq:approx_ODE}
    \dot{\hat{x}}(t) = p(\hat{x}(t)),\quad \quad \hat{x}(a) = \eta
\end{equation}
for some $p:X\to\mathbb{R}$. Then,
$\|x - \hat{x}\|_{L^\infty([a,b])} 
\leq (b-a)^{1/2}\|f\circ x -p\circ \hat{x}\|_{L^2([a,b])}$.
\end{proposition}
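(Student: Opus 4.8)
The plan is to write the difference $x - \hat{x}$ as an integral of the difference of the right-hand sides, using the common initial condition at $\xi$, and then bound the resulting integral uniformly in $t$. First I would observe that both $x$ and $\hat{x}$ satisfy integral equations: for any $t \in [a,b]$,
\begin{equation}
x(t) - \hat{x}(t) = \int_\xi^t \bigl( f(x(s)) - p(\hat{x}(s)) \bigr)\, ds,
\end{equation}
since the initial values $x(\xi) = \hat{x}(\xi) = \eta$ cancel. The key step is then to estimate the absolute value of the right-hand side pointwise in $t$: by the triangle inequality for integrals and then Cauchy--Schwarz on $[a,b]$ (extending the domain of integration from $[\xi,t]$ to $[a,b]$ costs nothing since the integrand is nonnegative in absolute value),
\begin{equation}
|x(t) - \hat{x}(t)| \leq \int_a^b | f(x(s)) - p(\hat{x}(s)) |\, ds \leq (b-a)^{1/2} \| f\circ x - p\circ \hat{x} \|_{L^2([a,b])}.
\end{equation}
Since the right-hand side is independent of $t$, taking the supremum over $t \in [a,b]$ gives the claimed bound on $\|x - \hat{x}\|_{L^\infty([a,b])}$.

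I do not anticipate a serious obstacle here; this is essentially a direct computation once the integral representation is in place. The only points requiring care are (i) making sure the integral equation is valid, which follows from the fundamental theorem of calculus given that $x$ and $\hat{x}$ are (absolutely) continuous solutions of their respective ODEs on the compact interval, so that $f\circ x$ and $p\circ\hat{x}$ are integrable; and (ii) handling both orientations $t \geq \xi$ and $t < \xi$, which is absorbed cleanly by passing to the absolute value inside the integral and enlarging the interval of integration to all of $[a,b]$. Note in particular that no Lipschitz or Grönwall-type argument is needed: the estimate compares $f\circ x$ to $p\circ\hat{x}$ directly (each composed with its own trajectory), rather than comparing $f$ to $p$, so there is no feedback term in $x - \hat{x}$ to control.

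One remark worth including in the proof is that this is why the statement is phrased in terms of $\|f\circ x - p\circ\hat{x}\|_{L^2}$ rather than, say, $\|f - p\|$ on $X$: the bound is exactly the quantity that the subsequent weak-SINDy error analysis will control, since $p\circ\hat{x}$ is the right-hand side of the surrogate evaluated along the surrogate trajectory. The factor $(b-a)^{1/2}$ is the norm of the inclusion $L^2([a,b]) \hookrightarrow L^1([a,b])$ and is sharp.
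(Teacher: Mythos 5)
Your proposal is correct and follows essentially the same route as the paper's proof: the integral representation $x(t)-\hat{x}(t)=\int_\xi^t\bigl(f(x(s))-p(\hat{x}(s))\bigr)\,ds$, enlargement of the integration interval to $[a,b]$, and H\"older/Cauchy--Schwarz, followed by taking the supremum over $t$. The additional remarks on absolute continuity and on why no Gr\"onwall argument is needed are accurate but not required.
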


 \begin{proof}
 For each $t\in[a,b]$, we have
  \begin{equation}
  \begin{alignedat}{2}
  |x(t) - \hat{x}(t)| 
  &=\left|\int_{a}^t f(x(s)) - p(\hat{x}(s)) \, ds\right |\\
  &\leq \int_{a}^b |f(x(s))-p(\hat{x}(s))|ds 
  \leq (b-a)^{1/2}  \|f\circ x -p\circ \hat{x}\|_{L^2([a,b])}\:
  \end{alignedat}
  \end{equation}
  by H\"{o}lder's inequality. This concludes the proof since the bound holds for all $t\in[a,b]$. 
\end{proof}
Motivated by Proposition~\ref{prop:ODE_sol_bound}, the goal of this section is to estimate
\begin{equation}\label{eq:thingtoest}
    \|f\circ x -p\circ \hat{x}\|_{L^2([a,b])},
\end{equation} 
where $p$ is the polynomial projection of the dynamics $f$ given by the weak-SINDy method.
 
Applying the triangle inequality to Eq.~\eqref{eq:thingtoest} gives
\begin{equation}
  \|f \circ x - p \circ \hat{x}\|_{L^2([a,b])}\leq \|f\circ x - p\circ x\|_{L^2([a,b])} + \|p\circ x - p \circ \hat{x}\|_{L^2([a,b])}.  
\end{equation}
We then provide an estimate on the first term on the right-hand side in Section~\ref{sec:scalar_ODE_analysis} and take the second term into account in Section~\ref{sec:Lipschitz argument}.

Boundedness of composition operators will play an important role in this paper in establishing several inequalities using the standard operator bound.
\begin{equation}\label{opnorm}
\|A x\|_\mathcal{H} \leq \|A\|_{\op}\|x\|_H\, 
\,\, \text{where} \,\,
\|A\|_{\op} = \sup\{ \|Ax\|_\mathcal{H}\colon \|x\|_H \leq 1\}
\,\,\text{for}\,\, A:H\rightarrow \mathcal{H}.
\end{equation}
Additionally, boundedness of a linear operator is equivalent to continuity (in the norm topology). 

A well-known theorem in \cite{dunford1988linear} proves boundedness of $L^2$ composition operators under reasonable assumptions in the setting that the flowmap $\phi$ is a self-mapping of a set $S$ into itself. 
In the following Proposition~\ref{prop:compmapboundcondition}, we give a slightly generalized version of this well-known theorem that is not restricted to self-mappings.
\begin{remark}
Here Proposition~\ref{prop:compmapboundcondition} is stated in the multi-dimensional setting. While the analysis in Section~\ref{sec:ODE case} focuses only on the one-dimensional case, the multi-dimensional statement is applicable in Section~\ref{sec:system of odes}.
\end{remark}
\begin{notation}
Let $\mu$ be a measure on $\mathbb{R}^n$  with $\sigma$-algebra $\Sigma$ and $A$ a finite measurable subset of $\mathbb{R}^n$.  Let $\Sigma_A$ and $\mu_A$ denote the restriction of the $\Sigma$ and $\mu$ to subsets of $A$.
\end{notation}

\begin{proposition}\label{prop:compmapboundcondition}
Let $\phi:B\rightarrow A:=\phi(B)$ where $A$ and $B$ are finite measurable subsets of $\mathbb{R}^n$. The composition operator $C_\phi:L^p(A, \Sigma_A, \mu_A)\rightarrow L^p(B, \Sigma_B, \mu_B)$
given by 
\[L^p(A, \Sigma_A, \mu_A)\ni f\mapsto f\circ\phi \in L^p(B, \Sigma_B, \mu_B) \]
is well defined and bounded if $\mu_B(\phi^{-1}(E))=0$ whenever $\mu_A(E)=0$ and 
\[\sup_{E\in \Sigma_A} \frac{\mu_B(\phi^{-1}(E))}{\mu_A(E)}=M<\infty.\]
Here, $\|C_\phi\| = M^{\frac{1}{p}}$.
\end{proposition}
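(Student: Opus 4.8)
The plan is to establish first that $C_\phi$ is well-defined on equivalence classes, then to prove the norm identity directly via the change-of-variables induced by $\phi$ on indicator functions, and finally extend to all of $L^p$ by the standard density/monotone-convergence machinery. First I would verify that $C_\phi$ respects null sets: if $f = g$ $\mu_A$-a.e., then $\{f \neq g\}$ is $\mu_A$-null, so by hypothesis $\phi^{-1}(\{f\neq g\}) = \{f\circ\phi \neq g\circ\phi\}$ is $\mu_B$-null, hence $f\circ\phi = g\circ\phi$ as elements of $L^p(B)$. This shows $C_\phi$ is a well-defined linear map on equivalence classes.

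Next I would compute the $L^p$ norm on simple functions. For a measurable $E \in \Sigma_A$, the indicator satisfies $\mathbf{1}_E \circ \phi = \mathbf{1}_{\phi^{-1}(E)}$, so
\begin{equation}
\|C_\phi \mathbf{1}_E\|_{L^p(B)}^p = \mu_B(\phi^{-1}(E)) \leq M\,\mu_A(E) = M\,\|\mathbf{1}_E\|_{L^p(A)}^p\:.
\end{equation}
Passing to a nonnegative simple function $s = \sum_{i=1}^m c_i \mathbf{1}_{E_i}$ with the $E_i$ disjoint, one has $s \circ \phi = \sum_i c_i \mathbf{1}_{\phi^{-1}(E_i)}$ with the $\phi^{-1}(E_i)$ disjoint, so
\begin{equation}
\|C_\phi s\|_{L^p(B)}^p = \sum_{i=1}^m c_i^p\, \mu_B(\phi^{-1}(E_i)) \leq M \sum_{i=1}^m c_i^p\, \mu_A(E_i) = M\,\|s\|_{L^p(A)}^p\:.
\end{equation}
Thus $\|C_\phi s\|_{L^p(B)} \leq M^{1/p}\|s\|_{L^p(A)}$ on simple functions. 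For general $f \in L^p(A)$, choose simple functions $s_n$ with $|s_n| \leq |f|$ and $s_n \to f$ pointwise $\mu_A$-a.e.; then $s_n\circ\phi \to f\circ\phi$ pointwise $\mu_B$-a.e.\ (using again that $\phi$ pulls null sets to null sets), and by Fatou's lemma together with the simple-function bound, $\|C_\phi f\|_{L^p(B)}^p \leq \liminf_n \|C_\phi s_n\|_{L^p(B)}^p \leq M\|f\|_{L^p(A)}^p$. This proves $C_\phi$ maps into $L^p(B)$ and $\|C_\phi\|_{\op} \leq M^{1/p}$.

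For the reverse inequality $\|C_\phi\|_{\op} \geq M^{1/p}$, I would take a supremizing sequence $E_n \in \Sigma_A$ with $\mu_B(\phi^{-1}(E_n))/\mu_A(E_n) \to M$ and test $C_\phi$ on the normalized indicators $\mathbf{1}_{E_n}/\mu_A(E_n)^{1/p}$, whose $L^p(A)$-norm is $1$ and whose image has $L^p(B)$-norm $\big(\mu_B(\phi^{-1}(E_n))/\mu_A(E_n)\big)^{1/p} \to M^{1/p}$; one should note that the sets $E_n$ with $\mu_A(E_n) = 0$ can be discarded (they contribute nothing by the null-set hypothesis) so the ratios are well-defined along the sequence. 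The main obstacle, and the only genuinely delicate point, is handling the measure-theoretic bookkeeping cleanly — in particular ensuring the a.e.\ convergence $s_n \circ \phi \to f \circ \phi$ is legitimate (it rests squarely on the null-set pushforward hypothesis) and confirming that $\phi^{-1}(E) \in \Sigma_B$ for every $E \in \Sigma_A$ so the quotient $M$ is even meaningful; the latter is implicitly part of the standing assumption that $C_\phi$ is defined at the level of measurable functions, which I would state explicitly. Everything else is the routine simple-function approximation argument.
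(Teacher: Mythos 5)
Your proposal is correct and follows essentially the same route as the paper: well-definedness from the null-set pullback hypothesis, the computation $\|C_\phi \chi_E\|^p=\mu_B(\phi^{-1}(E))$ on indicators and simple functions for the upper bound, and the same indicator inequality (read in reverse) for the lower bound $\|C_\phi\|\geq M^{1/p}$. You are in fact more complete than the paper, which stops at simple functions and leaves the density/Fatou extension to general $f\in L^p$ implicit.
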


\begin{proof}
Note, if $f\circ \phi(s)\neq g\circ \phi(s)$ then $f(r) \neq g(r)$ for $r=\phi(s)$. Let \[S= \{s\in B \mid f\circ \phi(s)\neq g\circ \phi(s)\}\]
and
\[R = \phi(S) \subset A.\]
Note, if $f=g$ almost everywhere in $\mu_A$, then the condition $\mu_B(\phi^{-1}(E))=0$ whenever $\mu_A(E) = 0$ gives us that 
\[0\leq \mu_B(S)\leq \mu_B(\phi^{-1}(R))= 0 \]
since $\mu_A(R)=0$. Therefore the operator is well-defined. For simple functions $f= \sum_{i}\alpha_i \chi_{E_i}$, the inequality
\[\|C_\phi f\|^p =\sum_{i}|\alpha_i|^p\mu_B(\phi^{-1}(E_i))\leq \sum_{i}|\alpha_i|^pM\mu_A(E_i)=M\|f\|^p \]
 shows that $\|C_\phi\|$ is bounded by $M^{\frac{1}{p}}$, where the first equality follows from $ \|C_\phi \chi_E\|^p = \mu_B(\phi^{-1}(E))$. With the boundedness of $C_\phi$,
\[\mu_B(\phi^{-1}(E)) = \|C_\phi \chi_E\|^p\leq \|C_\phi\|^p\|\chi_E\|^p=\|C_\phi\|^p\mu_A(E)\]
shows that $\|C_\phi\|\geq M^{\frac{1}{p}}$. Hence, $\|C_\phi\| = M^{\frac{1}{p}}$.

\end{proof}

\subsection{Convergence of weak-SINDy approximation} 
\label{sec:scalar_ODE_analysis}
In this section, we provide estimates of the approximation error $\|f\circ x-p\circ x\|_{L^2([a,b])}$ as the degrees of projection basis functions and test functions increase. Recall that the projection basis functions are denoted as $\varphi_j:X\to \mathbb{R}$, $j=0,\dots,J$ and the test functions are denoted as $\psi_k:[a,b]\to\mathbb{R}$, $k=0,\dots,K$. 
To simplify the presentation, here we focus on the case that both $\{\varphi_j\}$ and $\{\psi_k\}$ are chosen to be polynomials. The analysis extends to other choices of test functions, e.g., Fourier basis, as long as similar convergence properties are available.

The polynomial space spanned by the basis and test functions are denoted as $\mathbb{P}_J(X)$ and $\mathbb{P}_K([a,b])$, respectively.
We define the operator that projects functions in $\mathcal{H}$ onto $\mathbb{P}_K([a,b])$ in the $L^2$ sense as $\projK$. 
It is then straightforward to verify that the weak-SINDy approximation obtained by solving the least squares problem 
$\min_{\bm{w}\in \mathbb{R}^{J+1}}\|\bm{G}\bm{w}-\bm{b}\|_2$
with $\bm{G}$ and $\bm{b}$ defined in Eq.~\eqref{weak-SINDy_form} is equivalent to the minimizing polynomial
\begin{equation}\label{eq:p_JK_def}
    p_{J,K}^* := \argmin_{p\in\mathbb{P}_J(X)}\|\projK (f\circ x - p \circ x) \|_{L^2([a,b])}\:, 
\end{equation}
which is used throughout this section. Here the uniqueness of $p_{J,K}^*$ is guaranteed by the full column rank assumption of $\bm{G}$ stated in Section~\ref{subsec:assumption}, which is assumed throughout the paper along with other assumptions given therein.

To start the analysis, we first introduce the following notation.

\begin{notation}
Let $H^m(\Omega)$ denote the Hilbert Sobolev space of $L^2(\Omega)$ functions with $L^2$ distributional derivatives up to and including order $m$. Let $C^s(\Omega)$ be the space of $s$ times continuously differentiable functions on a set $\Omega$.
\end{notation}

The following two estimates in Lemmas~\ref{lem:polyapproxindeg} and \ref{lem:SobolevPolyest} for polynomial approximation errors are essential tools in the analysis in this section.
In these two lemmas, the operator $\projJ$ is defined as the $L_2$ projection operator from $H^m(\Omega)$ onto $\mathbb{P}_J(\Omega)$.
\begin{remark} In this section, we only need the single variable ($n=1$) version of Lemmas~\ref{lem:polyapproxindeg} and \ref{lem:SobolevPolyest}. However, we state these two lemmas in the general multi-variable form since they are needed in the analysis of ODE system approximations considered in Section~\ref{sec:system of odes} (specifically for Theorem~\ref{multivar convergence theorem}).
In these lemmas, $J$ refers to the max degree when multi-variable polynomials are considered.
\end{remark}

\begin{lemma}\label{lem:polyapproxindeg}
Let $\Omega$ a bounded cube in $\mathbb{R}^{n}$. Suppose $u\in H^m(\Omega)$, for any real $m\geq 0$ then 
\begin{equation}
    \|u-\projJ u\|_{L^2(\Omega)}
    \leq C\, J^{-m} \, \|u\|_{H^m(\Omega)},
\end{equation}
where the constant $C$ is independent of $J$ and $u$. 
\end{lemma}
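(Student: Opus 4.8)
The plan is to reduce this to a classical spectral/Jackson-type approximation estimate on a cube. The statement that $\|u - \projJ u\|_{L^2(\Omega)} \leq C J^{-m}\|u\|_{H^m(\Omega)}$ for the $L^2$-orthogonal projection onto $\mathbb{P}_J(\Omega)$ is, up to normalization of the degree parameter (recall $\mathbb{P}_J$ here is the \emph{max-degree} space, i.e.\ a tensor-product polynomial space), exactly the standard polynomial approximation result underlying spectral methods. So the first thing I would do is pin down that $\projJ u$ is the best $L^2$ approximation to $u$ from $\mathbb{P}_J(\Omega)$, hence $\|u-\projJ u\|_{L^2(\Omega)} = \inf_{p\in\mathbb{P}_J(\Omega)}\|u-p\|_{L^2(\Omega)}$, which lets me replace the projection by \emph{any} convenient polynomial of max degree $J$.

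Next I would produce such a polynomial. The cleanest route is via a tensor-product construction on the reference cube $[-1,1]^d$ (then rescale, absorbing the $\Omega$-dependence into $C$). Expand $u$ in a tensorized orthogonal polynomial basis — Legendre is the natural choice since we are in $L^2$ — and let $\projJ u$ be the truncation keeping all multi-indices $\bm{j}$ with $\max_\ell j_\ell \le J$. Then $\|u - \projJ u\|_{L^2}^2$ is the sum of squared coefficients over the complementary index set, every element of which has at least one component exceeding $J$. The key analytic input is the decay rate of Legendre coefficients of an $H^m$ function: integrating by parts against the Legendre differential operator (or citing the standard one-dimensional estimate, e.g.\ from Canuto–Hussaini–Quarteroni–Zang or the spectral-methods literature) gives that the one-dimensional truncation error in the $\ell$-th variable is bounded by $C J^{-m}$ times the $H^m$-norm in that variable. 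Summing a finite-dimensional ($d$ fixed) collection of such one-directional truncation errors, controlled by mixed/ordinary derivatives up to order $m$, yields the claimed bound with $C=C(d,m,\Omega)$ independent of $J$ and $u$; the non-integer $m$ case is handled by the usual interpolation-space argument between $H^{\lceil m\rceil}$ and $H^{\lfloor m\rfloor}$ (or directly via the Bochner–Riesz-type estimate on coefficient decay), noting $L^2 = H^0$ so the endpoint is trivial.

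The main obstacle — really the only nontrivial point — is the tensor-product bookkeeping: because $\mathbb{P}_J$ here is the max-degree (tensor-product) space rather than the total-degree space, the complementary index set is $\{\bm{j} : \exists \ell,\, j_\ell > J\}$, and I must bound the tail sum uniformly in $u$ using only $\|u\|_{H^m(\Omega)}$ rather than a stronger mixed-derivative norm. The standard fix is to dominate the full tail by $\sum_{\ell=1}^d (\text{tail in direction } \ell\text{ alone})$, each of which is a one-dimensional problem controlled by $\partial_\ell^m u \in L^2$, hence by $\|u\|_{H^m}$; the factor $d$ is harmless since $d$ is fixed. I would present this as: (1) invoke best-approximation property of $\projJ$; (2) state the one-dimensional Jackson-type estimate for Legendre truncation on an interval as a known lemma with citation to \cite{dunford1988linear}-adjacent spectral references already in the bibliography (or simply cite it); (3) tensorize and sum over the $d$ coordinate directions; (4) rescale from the reference cube and absorb constants. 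Since the problem says I may assume results stated earlier and treat routine calculations lightly, I would keep steps (2)–(4) terse and flag that the constant's dependence on $d$, $m$, and $\mathrm{diam}(\Omega)$ is explicit but immaterial to the statement.
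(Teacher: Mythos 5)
Your sketch is mathematically sound, but it is worth knowing that the paper does not prove this lemma at all: its entire ``proof'' is the citation \cite[Theorem~2.3]{canuto1982approximation}, so what you have written is essentially a reconstruction of the argument behind the cited result rather than an alternative to anything in the paper. Your outline matches the standard proof: reduce to best $L^2$ approximation, tensorize Legendre expansions on the reference cube, bound the complementary index set $\{\bm{j}:\exists\,\ell,\ j_\ell>J\}$ by a union bound over the $d$ one-directional tails (each controlled fiber-wise by the univariate derivative $\partial_\ell^{m}u$, hence by $\|u\|_{H^m(\Omega)}$), and interpolate for non-integer $m$. You correctly identify the only delicate point, namely that the max-degree space requires the directional union bound so that only the isotropic $H^m$ norm, not a mixed-derivative norm, appears on the right. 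Two small remarks: the reference you gesture at for the one-dimensional Jackson-type estimate should be \cite{canuto1982approximation} (already in the bibliography), not anything adjacent to \cite{dunford1988linear}, which is Dunford--Schwartz and is used elsewhere in the paper for boundedness of composition operators; and if you intend the lemma to be citable as the paper uses it, it would be cleaner to simply cite Canuto--Quarteroni as the authors do, since a fully self-contained proof of the coefficient-decay estimate and the interpolation step would roughly double the length of the section for no gain.
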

\begin{proof}
See \cite[Theorem~2.3]{canuto1982approximation}.
\end{proof}

\begin{lemma}\label{lem:SobolevPolyest}
Let $\Omega$ be a bounded cube of $\mathbb{R}^{n}$, Suppose $u\in H^m(\Omega)$, then for any real $0\leq s\leq m$ we have 
\[\|u-\projJ u\|_{H^s(\Omega)} \leq C J^{e(s,m)}\|u\|_{H^{m}(\Omega)}
\quad\text{with}\quad
e(s,m) =\left\{ \begin{array}{lc} 2s - m - \frac{1}{2} & s\geq 1\\
\frac{3s}{2} - m & 0\leq s \leq 1\end{array}\right.,\]
where the constant $C$ is independent of $J$ and $u$.
\end{lemma}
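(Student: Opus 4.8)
The plan is to reduce the multivariate estimate to the one-dimensional case and then combine the $s=0$ bound of Lemma~\ref{lem:polyapproxindeg} with a single sharp $H^1$ estimate via real interpolation of the Sobolev scale. Because the paper adopts the \emph{max degree} convention, on a cube $\Omega = I_1\times\cdots\times I_d$ the space $\mathbb{P}_J(\Omega)$ is the full tensor product $\mathbb{P}_J(I_1)\otimes\cdots\otimes\mathbb{P}_J(I_d)$, and the $L^2$-projection $\projJ$ factors as a composition of the one-dimensional $L^2$-projections $\projJ^{(\ell)}$ in each variable. Writing the error as a telescoping sum,
\[
u - \projJ u \;=\; \sum_{\ell=1}^{d} \bigl(\projJ^{(1)}\cdots\projJ^{(\ell-1)}\bigr)\bigl(I-\projJ^{(\ell)}\bigr)u,
\]
one estimates each summand by applying the one-dimensional bound in the $\ell$-th variable and integrating over the remaining ones, using that the one-dimensional projections are stable in the norms involved; this is a standard tensor-product argument (see, e.g., the reduction used in \cite{canuto1982approximation}) and costs only a $d$-dependent constant absorbed into $C$. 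It therefore suffices to prove the estimate on a single interval, which after an affine rescaling we may take to be $(-1,1)$.

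On the interval I would proceed in three stages according to $s$. The case $s=0$ is exactly Lemma~\ref{lem:polyapproxindeg}. For $s=1$ (and, more generally, integer $s\ge 1$) I would expand $u$ in Legendre polynomials and estimate the $H^s$-norm of the truncation tail directly, using the decay of the Legendre coefficients of a function in $H^m$ together with the explicit growth of the $H^s$-norms of the Legendre polynomials themselves; this yields $\|u-\projJ u\|_{H^1}\le C\,J^{3/2-m}\|u\|_{H^m}$, i.e., the exponent $e(1,m)=2-m-\tfrac12$. The remaining real $s$ are then obtained by real ($K$-method) interpolation along the Sobolev scale: interpolating the map $u\mapsto u-\projJ u$ between $H^m\to H^0$ (exponent $-m$) and $H^m\to H^1$ (exponent $\tfrac32-m$) with parameter $\theta=s$ produces exponent $(1-\theta)(-m)+\theta(\tfrac32-m)=\tfrac{3s}{2}-m$ on $0\le s\le 1$, and interpolating between consecutive integers $n$ and $n+1\ge 1$ reproduces the linear-in-$s$ formula $2s-m-\tfrac12$ for $s\ge 1$. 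Non-integer $m$ is handled the same way, writing $H^m$ as an interpolation space between integer-order Sobolev spaces.

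The main obstacle is the sharp exponent $2s-m-\tfrac12$ for $s\ge 1$: a purely soft argument — combining Lemma~\ref{lem:polyapproxindeg} with the polynomial inverse inequality $\|p'\|_{L^2(I)}\le C\,J^{2}\,\|p\|_{L^2(I)}$ for $p\in\mathbb{P}_J(I)$ — only delivers the weaker exponent $2s-m$, and recovering the additional factor $J^{-1/2}$ genuinely requires the Jacobi/Legendre coefficient asymptotics rather than a black-box interpolation argument. For this reason, and to keep the exposition short, the route I would actually adopt in the paper is to cite the corresponding theorem of \cite{canuto1982approximation} for the one-dimensional estimate (where it is proved for integer $m$, with real $m$ and real $s$ recovered by the interpolation argument above) and to supply only the tensor-product reduction needed to pass from the interval to the cube $\Omega\subset\mathbb{R}^d$.
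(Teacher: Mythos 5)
The paper's own ``proof'' of this lemma is a single line: a citation to \cite[Theorem~2.4]{canuto1982approximation}. Your plan therefore ends in essentially the same place --- defer the one-dimensional estimate to that reference --- but you add two things worth commenting on. First, your diagnosis of where the difficulty lies is exactly right: combining Lemma~\ref{lem:polyapproxindeg} with the inverse inequality of Lemma~\ref{lem:polyinverseineq} only yields the exponent $2s-m$, and the extra factor $J^{-1/2}$ for $s\ge 1$ genuinely requires the Legendre coefficient asymptotics; the interpolation bookkeeping you give for non-integer $s$ and $m$ is also the standard (and correct) way the piecewise-linear exponent $e(s,m)$ arises. Second, you correctly notice that the cited theorem is one-dimensional while the lemma is stated on a cube in $\mathbb{R}^d$, a gap the paper papers over. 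However, the telescoping tensor-product reduction you propose to fill it does not work as written for $s\ge 1$: bounding $\bigl(\projJ^{(1)}\cdots\projJ^{(\ell-1)}\bigr)(I-\projJ^{(\ell)})u$ in $H^s(\Omega)$ requires the one-dimensional $L^2$-projections to be $H^s$-stable uniformly in $J$, and the Legendre $L^2$-projection is not uniformly $H^1$-stable (its $H^1\to H^1$ norm grows like $J^{1/2}$, consistent with the $s=m=1$ case of the lemma itself), so the ``standard tensor-product argument'' loses positive powers of $J$ exactly in the regime where the exponent is delicate. The fix is either to route the reduction through intermediate one-dimensional operators that are simultaneously $L^2$- and $H^1$-stable, or simply to cite the multidimensional results of \cite{canuto1982approximation} (their Section~3) rather than the one-dimensional Theorem~2.4 --- which is arguably what the paper should have done as well.
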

\begin{proof}
See \cite[Theorem~2.4]{canuto1982approximation}. 
\end{proof}

We are now ready to estimate the approximation error $\|f\circ x - p_{J,K}^*\circ x\|_{L^2([a,b])}$. To start, we apply the triangle inequality
\begin{align}
\|f\circ x - p_{J,K}^*\circ x\|_{L^2([a,b])}\leq 
&\|f\circ x - \projK(f\circ x)\|_{L^2([a,b])}
+ \|\projK(f\circ x-p^*_{J,K}\circ x)\|_{L^2([a,b])}\nonumber\\
\label{eq:triangle_ineq}
&+\|p^*_{J,K}\circ x - \projK(p^*_{J,K}\circ x)\|_{L^2([a,b])},
\end{align}
and provide upper bounds for each of the three terms on the right-hand side independently. The upper bound for the first term follows from a direct application of Lemma~\ref{lem:polyapproxindeg}, which requires the regularity information of $f\circ x$. Here, we derive the regularity of $f\circ x$ from regularity assumptions on the solution $x$ and the composition operator $C_x$.

\begin{remark}\label{fixed_reg}
Let $x:[a,b]\rightarrow X$ be a function in $C^s([a,b])$. Since all polynomials $p$ are $C^\infty$, it follows that $p\circ x\in C^s([a,b])$ (and thus in $H^s([a,b])$).
Further, suppose $f\in H^m(X)$ and $C_x:H^s(X)\rightarrow H^s([a,b])$ is a bounded operator for some $s\in (0,m]$, then the boundedness of $C_x$ implies that $f\circ x\in H^s([a,b])$.  
\end{remark}
\begin{corollary}\label{cor:subspacenormest}
Suppose $f\in H^m(X)$ and $x\in C^s([a,b])$ for some $s\leq m$, then 
\begin{equation}\label{eq:subspacenormest}
\|f\circ x -\projK(f\circ x)\|_{L^2([a,b])}\leq C K^{-s}\|(f\circ x)\|_{H^s([a,b])}\:.
\end{equation}
provided $C_x:H^s(X)\rightarrow H^s([a,b])$ is bounded. 
\end{corollary}
Next, we give an estimate of the second term in Eq.~\eqref{eq:triangle_ineq} in the following proposition.
\begin{proposition}\label{prop:minproblem} 
Let $f\in H^m(X)$  for some $m>0$ and $x:[a,b]\to X$, then
\begin{equation}
    \|\projK(f\circ x-p_{J,K}^*\circ x )\|_{L^2([a,b])}\leq 
    C \, \|C_x\|_{\op}\, J^{-m} \,  \|f\|_{H^m(X)}\:,
\end{equation}
provided that the composition operator $C_x$ is bounded in the $L^2$ sense.
\end{proposition}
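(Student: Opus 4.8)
The plan is to exploit the defining optimality of $p_{J,K}^*$ in Eq.~\eqref{eq:p_JK_def} by testing it against a well-chosen competitor, namely the $L^2$-orthogonal polynomial projection $\projJ f\in\mathbb{P}_J(X)$ of $f$ onto $\mathbb{P}_J(X)$. Since $p_{J,K}^*$ minimizes $\|\projK(f\circ x - p\circ x)\|_{L^2([a,b])}$ over all $p\in\mathbb{P}_J(X)$ and $\projJ f$ is admissible, one immediately obtains
\[
\|\projK(f\circ x - p_{J,K}^*\circ x)\|_{L^2([a,b])}
\le \bigl\|\projK\bigl(f\circ x - (\projJ f)\circ x\bigr)\bigr\|_{L^2([a,b])}.
\]

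Next I would discard the projector $\projK$ using the fact that the $L^2$-orthogonal projection onto $\mathbb{P}_K([a,b])$ is norm non-expansive, so the right-hand side is at most $\|f\circ x - (\projJ f)\circ x\|_{L^2([a,b])}$. Writing this difference as $(f - \projJ f)\circ x = C_x(f - \projJ f)$ and invoking the assumed $L^2$-boundedness of the composition operator (whose existence is guaranteed by Proposition~\ref{prop:compmapboundcondition} under the stated Radon–Nikodym-type condition), I bound it by $\|C_x\|_{\op}\,\|f - \projJ f\|_{L^2(X)}$ via the standard operator bound Eq.~\eqref{opnorm}.

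Finally, I would apply the single-variable ($d=1$) case of Lemma~\ref{lem:polyapproxindeg} on the bounded interval $X$ (here $X$, the range of $x$, is bounded, hence contained in a bounded cube) to conclude $\|f - \projJ f\|_{L^2(X)}\le C\,J^{-m}\,\|f\|_{H^m(X)}$ for $f\in H^m(X)$. Chaining the three inequalities gives the claimed estimate, with a constant $C$ independent of $J$ and $f$.

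All the computations here are routine; the only points requiring care are (a) verifying that $\projJ f$ is a genuine competitor, i.e.\ that the minimization in Eq.~\eqref{eq:p_JK_def} ranges over all of $\mathbb{P}_J(X)$, and (b) the domain hypotheses of Lemma~\ref{lem:polyapproxindeg}, namely that $X$ be a bounded cube so that the Jackson-type estimate applies — for a bounded interval this is automatic, but the constant $C$ then absorbs a dependence on $X$. I do not anticipate any substantial obstacle beyond this bookkeeping.
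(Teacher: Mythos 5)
Your proposal is correct and follows essentially the same route as the paper's proof: test the optimality of $p_{J,K}^*$ against the competitor $\projJ f$, drop $\projK$ by non-expansiveness of the $L^2$ projection, pull out $\|C_x\|_{\op}$, and finish with Lemma~\ref{lem:polyapproxindeg}. No meaningful differences.
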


\begin{proof}
    By definition of $p_{J,K}^*$ in Eq.~\eqref{eq:p_JK_def}, we have that
    \begin{equation}\label{eq:prop_5_1}
        \|\projK (f\circ x - p_{J,K}^* \circ x) \|_{L^2([a,b])}\leq
        \|\projK (f\circ x - p \circ x) \|_{L^2([a,b])},\quad 
        \forall p\in \mathbb{P}_J(X)\:.
    \end{equation}
    Since $\projK$ is the minimum $L^2$ projection from $\mathcal{H}$ to $\mathbb{P}_K([a,b])$, we have, $\forall p\in \mathbb{P}_J(X)$,
    \begin{equation}\label{eq:prop_5_2}
        \|\projK(f\circ x-p\circ x )\|_{L^2([a,b])} \leq\|f\circ x-p\circ x \|_{L^2([a,b])}\leq  \|C_x\|_{\op}\|f-p\|_{L^2(X)},
    \end{equation}
    where $\|C_x\|_{\op}$ denotes the operator norm of $C_x$ (see Eq.~\eqref{opnorm}).
    Now, define $p_J:=\projJ f \in \mathbb{P}_J(X)$, it then follows from Eq.~\eqref{eq:prop_5_2} and Lemma~\ref{lem:polyapproxindeg} that
    \begin{equation}\label{eq:prop_5_3}
        \|\projK(f\circ x-p_J\circ x )\|_{L^2([a,b])} \leq  C\, \|C_x\|_{\op} \, J^{-m}\, \|f\|_{H^m(X)}\:.
    \end{equation}
    Plugging Eq.~\eqref{eq:prop_5_3} into Eq.~\eqref{eq:prop_5_1} leads to the desired bound.
\end{proof}
 
Several of the results to follow require $p\circ x$ to be of a fixed regularity for all polynomials $p$. Based on Remark~\ref{fixed_reg}, $p \circ x$ is of the same regularity for all polynomials $p$.
With Remark~\ref{fixed_reg}, we apply Lemma~\ref{lem:polyapproxindeg} and provide an upper bound for the third term in Eq.~\eqref{eq:triangle_ineq}, $\|p_{J,K}^*\circ x -\projK(p_{J,K}^*\circ x)\|_{L^2([a,b])}$, in the following corollary.

\begin{corollary}\label{cor:polyprojest}
Suppose $x\in C^s([a,b])$, then 
\begin{equation}\label{eq:polyprojest}
    \|p_{J,K}^*\circ x -\projK(p_{J,K}^*\circ x)\|_{L^2([a,b])}
    \leq C K^{-s}\|p_{J,K}^*\circ x\|_{H^s([a,b])},\quad
    J=1,2,\dots\:.
\end{equation}
\end{corollary}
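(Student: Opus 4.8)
The plan is to apply Lemma~\ref{lem:polyapproxindeg} directly with the substitutions $u = p_{J,K}^*\circ x$, $\Omega = [a,b]$, $d=1$, and $m=s$. The hypothesis of the corollary — that $p\circ x \in H^s([a,b])$ for every polynomial $p$ — guarantees in particular that $p_{J,K}^*\circ x \in H^s([a,b])$, since $p_{J,K}^*$ is itself a polynomial (it lies in $\mathbb{P}_J(X)$ by construction in Eq.~\eqref{eq:p_JK_def}). Thus $u$ is a legitimate element of $H^s([a,b])$ and Lemma~\ref{lem:polyapproxindeg} applies verbatim.

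Concretely, I would first observe that $\projK$, which was defined as the $L^2$ projection from $\mathcal{H}$ onto $\mathbb{P}_K([a,b])$, coincides with $\projJ$ in the statement of Lemma~\ref{lem:polyapproxindeg} up to the relabeling of the degree index (the lemma uses $J$ as the max degree, here the role is played by $K$). Then, invoking the lemma with $u = p_{J,K}^*\circ x$ and $m = s \in (0,r]$ yields
\begin{equation}
\|p_{J,K}^*\circ x - \projK(p_{J,K}^*\circ x)\|_{L^2([a,b])} \leq C\, K^{-s}\, \|p_{J,K}^*\circ x\|_{H^s([a,b])},
\end{equation}
with $C$ independent of $K$ and of the particular function $p_{J,K}^*\circ x$. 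Since the hypotheses hold for each fixed $J = 1,2,\dots$, the bound holds for all such $J$, which is exactly the claimed estimate.

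There is essentially no obstacle here: the corollary is a bookkeeping application of the already-established polynomial approximation estimate, and the only point requiring a word of justification is that the membership hypothesis $p\circ x\in H^s$ is being used precisely to license applying $H^s$-norm to $p_{J,K}^*\circ x$ on the right-hand side. One minor subtlety worth noting in the write-up is that the constant $C$ does not depend on $J$ either — this is automatic because the dependence on $J$ enters only through the function $p_{J,K}^*\circ x$, and the constant in Lemma~\ref{lem:polyapproxindeg} is uniform over all functions in $H^s([a,b])$; it is the norm $\|p_{J,K}^*\circ x\|_{H^s([a,b])}$ on the right that carries whatever $J$-dependence there is, to be controlled later in the analysis.
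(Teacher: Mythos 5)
Your proposal is correct and is precisely the argument the paper intends: the corollary is stated without a separate proof because it is a direct application of Lemma~\ref{lem:polyapproxindeg} with $u=p_{J,K}^*\circ x$, $m=s$, and the projection degree played by $K$, with the hypothesis $p\circ x\in H^s([a,b])$ (cf.\ Remark~\ref{fixed_reg}) supplying the required membership. Your added remark that the constant $C$ is independent of $J$ and $K$ matches the observation the paper makes immediately after the corollary.
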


We note that since the constant $C$ in Eq.~\eqref{eq:polyprojest} comes from Lemma~\ref{lem:polyapproxindeg}, $C$ is independent of $K$ and $p_{J,K}^*$. Therefore, we next show that $\|p_{J,K}^*\circ x\|_{H^s([a,b])}$ is bounded from above by a constant independent of $J$ and $K$. To bound  $\|p_{J,K}^*\circ x\|_{H^s([a,b])}$, we will make use of two intermediate polynomials,
\begin{equation}
  p_J := \argmin_{p\in \mathbb{P}_J(X)}\|f-p\|_{L^2(X)} 
  \quad\text{and}\quad
  q_J :=\argmin_{p\in \mathbb{P}_J(X)} \|f\circ x - p\circ x \|_{L^2([a,b])},
\end{equation} 
where $p_J$ is the best $L^2$ polynomial approximation on $X$ and $q_J$ is the best polynomial approximation under composition with $x$.
Proposition \ref{prop:pjk_converges_to_qj} states that, $p^*_{J,K}$ converges to $q_J$ as $K$ increases, and the proof is given in Appendix \ref{sec:appendix}. We will also show in Lemma \ref{lem:q_JfollowsP_J} that $q_J\circ x$ converges to $p_J\circ x$ as $J$ increases. Since $p_J$ converges to $f$ as $J$ increases by Lemma \ref{lem:polyapproxindeg}, we will use these facts to bound $\|p_{J,K}^*\circ x\|_{H^s([a,b])}$ in Proposition \ref{prop:R3_bound}. 
\begin{proposition}\label{prop:pjk_converges_to_qj}
Under the assumption that the polynomial $q_J$ is uniquely defined, $p_{J,K}^*\in H^s(X)$ converges to $q_J\in H^s(X)$ in Sobolev norm as $K$ increases for any $s\geq 0$ and bounded domain $X$. 
\end{proposition}
\begin{proof}
This is a restatement of Proposition \ref{prop:coefficientconvergence}, the proof of which is in Appendix \ref{sec:appendix}.
\end{proof}
By expressing in terms of the test function basis $\{\psi_k\}$, we note that $q_J$ solves the minimization problem:
\begin{equation}\label{eq:qj_min}
\min_{p\in \mathbb{P}_J} \sum_{k=0}^\infty |\langle f\circ x - p \circ x,\psi_k\rangle|^2
\end{equation}
and $p^*_{J,K}$ solves a truncated version:
\begin{equation}\label{eq:pjk_min}
\min_{p\in \mathbb{P}_J} \sum_{k=0}^K |\langle f\circ x - p \circ x,\psi_k\rangle|^2
\end{equation}
It is clear that the minimal value of Eq.~\eqref{eq:pjk_min} converges to the one of Eq.~\eqref{eq:qj_min} as $K\to\infty$. However, it is necessary to show that the minimizers converge as well. The convergence proof of the minimizers, i.e., the polynomials, is given in Appendix~\ref{sec:appendix}, which makes use of strong convexity arguments of the objective functions in Eqs.~\eqref{eq:qj_min} and \eqref{eq:pjk_min}.

Next, to show that $q_J\circ x$ converges to $p_J\circ x$, we need a generalized reverse Sobolev inequality as stated in the following Proposition~\ref{prop:generalized_reverse_sobolev}, which is a single variable simplification of Proposition~\ref{prop:mv-generalized_reverse_sobolev} stated in Appendix~\ref{appendix:recurrence_relations} as  and proved therein. 

\begin{proposition}[Generalized reverse Sobolev inequality]
\label{prop:generalized_reverse_sobolev}
Suppose that $x\in C^s([a,b])$, $u$ is a $J$-th degree polynomial, then there exists a constant $C$ such that
\begin{equation}
\|u\circ {x}\|_{H^{s}([a,b])}\leq {C} J^{2s} \|u\circ {x}\|_{L^2([a,b])}.
\end{equation}
\end{proposition}
\begin{proof}
    See the proof of Proposition~\ref{prop:mv-generalized_reverse_sobolev} in Appendix~\ref{appendix:recurrence_relations}.
\end{proof}

\begin{lemma}
\label{lem:q_JfollowsP_J} Define $r_J = q_J-p_J\in \mathbb{P}_J(X)$. Let $f\in H^m(X)$, $x\in C^s([a,b])$, and $s$ be such that $s<\frac{m}{2}$.
Suppose that the composition operator $C_x$ is bounded in the $L^2$ sense, then there exists a constant $C$ such that 
\begin{equation}
    \|r_J\circ x\|_{H^s([a,b])}\leq C \|C_x\|_{\textup{op}}  J^{2s-m} \|f\|_{H^m(X)}\:.
\end{equation}
\end{lemma}
\begin{proof}From the definition of $r_J$ and the triangle inequality, we have
\begin{equation}
    \begin{alignedat}{2}
    \|r_J\circ x\|_{L^2([a,b])} &\leq \|f\circ x - q_J \circ x \|_{L^2([a,b])} + \|f\circ x - p_J \circ x \|_{L^2([a,b])}\\
    &\leq \|f\circ x - p_J \circ x \|_{L^2([a,b])} + \|f\circ x - p_J \circ x \|_{L^2([a,b])}\\
    & \leq 2\|C_x\|_{\text{op}}\|p_J - f\|_{L^2(X)}
    \leq 2C\|C_x\|_{\text{op}}J^{-m}\|f\|_{H^m(X)},
    \end{alignedat}
\end{equation}
where the definition of $p_J$, the boundedness of $C_x$, and Lemma~\ref{lem:polyapproxindeg} are used. The result then follows from Proposition~\ref{prop:generalized_reverse_sobolev}. 
\end{proof}

\begin{proposition}
\label{prop:q_jcircx-bound}
Let $f\in H^m(X)$, $x\in  C^s([a,b])$, and $s$ be such that $s<\frac{m}{2}$. Suppose that the composition operator $C_x$ is bounded in both the $L^2$ and $H^s$ sense, then we have 
\begin{equation}
   \|q_J\circ x\|_{H^s([a,b])}\leq {C}J^{2s-m}\|f\|_{H^m(X)} + {C}J^{e(s,m)}\|f\|_{H^m(X)} + \|f\circ x\|_{H^s([a,b])}.
\end{equation}
\end{proposition}

\begin{proof}
By the triangle inequality,
\begin{align}
 \|q_J\circ x\|_{H^s([a,b])}&\leq  \|q_J \circ x- p_J\circ x\|_{H^s([a,b])} + \|p_J\circ x - f\circ x \|_{H^s([a,b])} + \|f\circ x\|_{H^s([a,b])}\nonumber\\
&\leq \|r_J\circ x\|_{H^s([a,b])} +  \|C_x\|_{\text{op}}\|p_J - f \|_{H^s(X)} + \|f\circ x\|_{H^s([a,b])}.
 \end{align}
 We apply Lemma \ref{lem:q_JfollowsP_J} to the first term, and the second term is bounded by Lemma \ref{lem:SobolevPolyest}.
\end{proof}

\begin{proposition}\label{prop:R3_bound}
Let $f\in H^m(X)$, $x\in C^s([a,b])$, and $s$ be such that $s<\frac{m}{2}$. Suppose that the composition operator $C_x$ is bounded in the $L^2$ and $H^s$ sense. Then, for some $\tilde{C} >0$, 
\begin{equation}
    \|p^*_{J,K}\circ x\|_{H^s([a,b])}\leq \|f\circ x\|_{H^s([a,b])}+\tilde{C}\:,
\end{equation}
and thus
\begin{equation}
\|p_{J,K}^*\circ x -\projK(p_{J,K}^*\circ x)\|_{L^2([a,b])}\leq C K^{-s}(\|f\circ x\|_{H^s([a,b])}+\tilde{C})\:.
\end{equation}
\end{proposition}

\begin{proof}

By the triangle inequality,
\begin{equation}
\label{eq:newthingtobound}
\begin{alignedat}{2}
    \|p^*_{J,K}\circ x\|_{H^s([a,b])}
    &\leq 
    \|(q_J \circ x - p^*_{J,K}\circ x)\|_{H^s([a,b])}
     + \|q_J \circ x\|_{H^s([a,b])}
\end{alignedat}
\end{equation}
where $\|(q_J \circ x - p^*_{J,K}\circ x)\|_{H^s([a,b])}\rightarrow 0$ as $K\rightarrow \infty$ by Proposition \ref{prop:pjk_converges_to_qj} and continuity of the composition operator. We bound $\|q_J \circ x\|_{H^s([a,b])}$ by applying Proposition \ref{prop:q_jcircx-bound}.

Since $s<\frac{m}{2}$, $J^{2s-m}$ and $J^{e(s,m)}$ decrease as $J$ increases, hence there exists some constant $\tilde{C}>0$, independent of $J$ and $K$, such that $\|p^*_{J,K}\circ x\|_{H^s([a,b])} \leq \|f\circ x\|_{H^s([a,b])}+\tilde{C}$, proving the first statement. Plugging this into Eq.~\eqref{eq:polyprojest} in Corollary~\ref{cor:polyprojest} then leads to the second statement.
\end{proof}

With the three terms on the right-hand side of Eq.~\eqref{eq:triangle_ineq} bounded, the following theorem is then a direct consequence of Corollary~\ref{cor:subspacenormest}, Proposition~\ref{prop:minproblem}, and Proposition~\ref{prop:R3_bound}.

\begin{theorem}\label{thm:convergence theorem}
Let $f\in H^m(X)$, $x\in C^s([a,b])$, and $s$ be such that $s<\frac{m}{2}$. Suppose that the composition operator $C_x$ is bounded in both the $L^2$ and $H^s$ sense.
The approximation error 
\begin{equation}
\begin{alignedat}{2}
    \|f\circ x - p_{J,K}^*\circ x\|_{L^2([a,b])} \leq&  
    C K^{-{s}}\|f\circ x\|_{ H^s([a,b])}
    + C \, \|C_x\|_{\op}\, J^{-m} \,  \|f\|_{H^m(X)}\\
    &+ C K^{-s}(\|f\circ x\|_{H^s([a,b])}+\tilde{C})\:.
\end{alignedat}
\end{equation}
\end{theorem}

\subsection{Error estimates on the approximate solution from weak-SINDy}
\label{sec:Lipschitz argument}
With the convergence of the consistency error shown in Theorem~\ref{thm:convergence theorem},
we show in the following proposition that the difference between the original and approximate solutions is governed by the consistency error in a sufficiently small time interval.

\begin{proposition}\label{prop:Lipschitz} Consider the ODE given in Eq.~\eqref{eq:ode} on time interval $[a,b]$.
Suppose that the polynomial $p=p_{J,K}^*$ defined in Eq.~\eqref{eq:p_JK_def} satisfies
\begin{equation}\label{eq:small_consistency_error}
    \|f\circ x - p\circ x\|_{L^2([a,b])}\leq \varepsilon,
\end{equation}
for some $\varepsilon>0$. Let $L_{[a,\beta]}$ denote the Lipschitz constant of $p$ on some time interval $[a,\beta]\subseteq[a,b]$ and $\hat{x}$ be the approximate solution in Eq.~\eqref{eq:approx_ODE}. Suppose that $[a,\beta]$ is sufficiently small such that $(\beta-a)L_{[a,\beta]}<1$, then 
\begin{equation}
\|x-\hat{x}\|_{L^\infty ([a, \beta])}\leq 
\frac{(\beta-a)^{1/2}}{1-(\beta-a) L_{[a,\beta]}}\,\varepsilon\:.
\end{equation}

\end{proposition}

\begin{proof}
By the definition of Lipschitz constant $L_{[a,\beta]}$, we have
\begin{equation}\label{eq:Lipschitz_bound}
\|p \circ x - p \circ \hat{x} \|^2_{L^2([a,\beta])}
\leq \int^\beta_a L_{[a,\beta]}^2 |x(s)-\hat{x}(s)|^2 ds
\leq L_{[a,\beta]}^2\|x-\hat{x}\|^2_{L^\infty([a,\beta])}(\beta-a).
\end{equation}

The claim is then proved via a similar analysis as in Proposition~\ref{prop:ODE_sol_bound}, i.e. by H\"{o}lder's inequality,
\begin{align}
\|x-\hat{x}\|_{L^\infty ([a, \beta])}
&\leq (\beta-a)^{1/2}\|f\circ x -p \circ \hat{x}\|_{L^2([a,\beta])}\nonumber \\
&\leq (\beta-a)^{1/2}\left(\|f\circ x - p \circ x \|_{L^2([a,\beta])} 
+ \|p \circ x - p \circ \hat{x} \|_{L^2([a,\beta])}\right) \label{eq:alphabeta} \\
&\leq (\beta-a)^{1/2}\, \varepsilon + (\beta-a) L_{[a,\beta]} \|x-\hat{x}\|_{L^\infty([a,\beta])}\:, \nonumber 
\end{align}
where the last inequality follows from Eq.~\eqref{eq:small_consistency_error}, $[a,\beta]\subseteq[a,b]$, and Eq.~\eqref{eq:Lipschitz_bound}.
\end{proof}

\section{weak-SINDy approximation to ODE systems and an application to POD discretization}
\label{sec:system of odes}
In this section, we extend the convergence theorems in Section~\ref{sec:scalar_ODE_analysis} for scalar ODEs to ODE systems. Our results readily generalize to the multi-variable setting as many of the proof techniques rely on Hilbert space methods (see Remark \ref{remark:single-multi-remark}). As an application, we show how weak-SINDy techniques can be applied to POD system inference as considered in \cite{qian2022reduced}. 

\subsection{Analysis of weak-SINDy approximation to ODE systems}
\label{sec:ode_system_analysis}

In this section, we will extend the error analysis in Section~\ref{sec:ODE case} to the case of systems of ODEs. 
Here $i$ is used as a component-wise index, e.g., $x_i$ denotes the $i$-th component of $\mathbf{x}\in\mathbb{R}^N$. Consider the ODE system
\begin{equation}\label{system}
\dot{\mathbf{x}}(t)= f(\mathbf{x}(t)),\qquad \mathbf{x}(a) = \boldsymbol{\eta}\in \mathbb{R}^N,\qquad \text{for } t\in[a,b]\:, 
\end{equation}
where $\mathbf{x}(t):=[x_1(t), \dots, x_N(t)]^T$ with $x_i\colon [a,b]\to X_i\subseteq\mathbb{R}$, $i=1,\dots,N$. The $i$-th component of $f\colon X^N \to \mathbb{R}^N$ is denoted as $f_i\colon X^N \to \mathbb{R}$, $i=1,\dots,N$, with $X^N:=\bigotimes_{i=1}^N X_i$ denoting the Cartesian product of $X_i$.
While the test functions $\psi_k\colon[a,b]\to\mathbb{R}$ remain identical to the one considered in Section~\ref{sec:ODE case}, here we consider the set of basis functions to be $\{\varphi_{\mathbf{j}}\}_{\mathbf{j}=0}^J$ that spans $\mathbb{P}_J(X^N)$, the space of polynomials up to degree $J$ on $X^N$, where $\mathbf{j}\in\mathbb{N}^N$ is a multi-index. Here $J$ refers to the max degree of a polynomial.

\begin{remark}
\label{remark:single-multi-remark}
In this section, the domain $X^N$ defined above is no longer the range of the trajectory as in Section~\ref{sec:ODE case}. 
This choice allows for definitions of the function spaces and the relevant polynomials $p_J, q_J$ and $p^*_{J,K}$ on $X^N$, which are used in the following analysis.
\end{remark}

The weak-SINDy method approximates the ODE system in a component-wise manner. Specifically, for each component $f_i$ of the vector-valued function $f$, the weak-SINDy approximation is given by
\begin{equation}\label{eq:p_iJK_def}
    p_{i,J,K}^*:= 
    \argmin_{p\in \mathbb{P}_J(X^N)}
    \|\projK(f_i\circ \mathbf{x} - p\circ \mathbf{x})\|_{L^2([a,b])}\:.
\end{equation}
As in the scalar ODE case, the minimization problem in Eq.~\eqref{eq:p_iJK_def} can be written in the least-squares form, i.e.,
\begin{equation}\label{eq:ODE_system_LS}
\textstyle
    p_{i,J,K}^* = \sum_{|\mathbf{j}|_\infty\leq J}\bm{w}_{\mathbf{j}}^{(i)} \cdot \varphi_{\mathbf{j}}\:,\quad\text{with}\quad 
    \bm{w}^{(i)} := \argmin_{\bm{w}}\|\bm{G}\bm{w} - \bm{b}^{(i)}\|^2_2\:,
\end{equation}
where
$
    \bm{G}_{k,\,\mathbf{j}} := \ip{\varphi_{\mathbf{j}}(\mathbf{x}(\cdot))}{\psi_k}
$ 
and 
$
    \bm{b}^{(i)}_k := \ip{\dot{x}_i}{\psi_k} = - \ip{x_i}{\dot{\psi}_k}.
$

To this end, we bound the difference between the approximate and original solutions following the same approach as in Proposition~\ref{prop:ODE_sol_bound}.
Here, a few lemmas that were stated/proved in the multi-variable setting in Section~\ref{sec:ODE case} can be directly applied.

\begin{proposition}
 Suppose that the system \eqref{system} has a solution $\mathbf{x}:[a,b]\rightarrow X^N \subseteq \mathbb{R}^N$. 
 Let $\hat{\mathbf{x}}:[a,b]\rightarrow X^N$ be a solution to the ODE system with dynamics $p\colon X^N\to\mathbb{R}^N$, i.e.,
 \begin{equation}\label{eq:approx_ODE_sys}
     \dot{\hat{\mathbf{x}}}(t) = p(\hat{\mathbf{x}}(t)),\quad \quad \hat{\mathbf{x}}(a) = \boldsymbol{\eta}\:.
 \end{equation} 
     Then, 
$         \|x_i - \hat{x}_i\|_{L^\infty([a,b])} \leq 
         (b-a)^{1/2}\|f_i\circ\hat{\mathbf{x}} - p_i\circ\mathbf{x}\|_{L^2([a,b])}
         $
     for $i = 1,\ldots, N$.

 \end{proposition}

Using the same argument as in Section~\ref{sec:scalar_ODE_analysis} leads to an estimate on the component-wise approximation error $\|f_i\circ \mathbf{x} - p_{i,J,K}^*\circ \mathbf{x}\|_{L^2([a,b])}$ as given in Theorem~\ref{multivar convergence theorem}. The generalization from single-variable to multi-variable functions is rather straightforward, given that the main tools for polynomial approximation error estimations, Lemmas~\ref{lem:polyapproxindeg} and \ref{lem:SobolevPolyest}, are already stated in the multi-variable form and the generalized reverse Sobolev inequality in the multi-variable setting is readily stated in Proposition~\ref{prop:mv-generalized_reverse_sobolev} and proved in Appendix~\ref{appendix:recurrence_relations}.

\begin{theorem}\label{multivar convergence theorem}
Suppose $f_i\in H^m(X^N)$, $\mathbf{x}\in C^s([a,b])$ and $s$ be such that $s<\frac{m}{2}$. Further, suppose that the composition operator $C_x$ is a bounded operator in both the $L^2$ and $H^s$ sense, then the approximation error \begin{equation}
\begin{alignedat}{2}
    \|f_i\circ \mathbf{x} - p_{i,J,K}^*\circ \mathbf{x}\|_{L^2([a,b])} &\leq  
    C K^{-{s}}\|f_i\circ \mathbf{x}\|_{{H^s([a,b])}}
    + C \, \|C_x\|_{\op}\, J^{-m} \,  \|f_i\|_{H^m(X^N)}\\
    &+ C K^{-s}(\|f_i\circ \mathbf{x}\|_{H^s([a,b])}+\tilde{C})\:.
\end{alignedat}
\end{equation}
\end{theorem}

With this bound on the component-wise approximation error, we proceed with the following proposition to bound the difference between the original and approximate solutions following the same strategy as in Proposition~\ref{prop:Lipschitz}.

\begin{proposition} 
Consider ODE system \eqref{system} on time interval $[a,b]$.
Suppose that the polynomial $p_i=p_{i,J,K}^*$ defined in Eq.~\eqref{eq:p_iJK_def} satisfies
\begin{equation}
    \|f_i\circ \mathbf{x} - p_i\circ \mathbf{x}\|_{L^2([a,b])}\leq \varepsilon,
\end{equation}
for some $\varepsilon>0$.
Let $L_{[a,\beta]}$ denote the Lipschitz constant of $p_i$ on some time interval $[a,\beta]\subseteq[a,b]$ and $\hat{\mathbf{x}}$ be the approximate solution in Eq.~\eqref{eq:approx_ODE_sys}. Suppose that $[a,\beta]$ is sufficiently small such that $(\beta-a)L_{[a,\beta]}<1$, then 
\begin{equation}
\|x_i-\hat{x}_i\|_{L^\infty ([a, \beta])}\leq 
\frac{(\beta-a)^{1/2}}{1-(\beta-a) L_{[a,\beta]}}\,\varepsilon\:.
\end{equation}

\end{proposition}

\subsection{POD Background} \label{subsec:POD_discretization}
 Suppose that we have the following PDE
\[\partial_t u = F(u(x,t))\] for some function $F$ with a unique solution $u\colon\Omega\times [0,T]\to\mathbb{R}$. Discretization of the domain of $u$ offers a way of approximating a PDE as an ODE system. However, depending on the necessary amount of points in the discretization these systems can be very high dimensional. To circumvent this high-dimensionality, one can choose to instead use a POD discretization. For a good overview of POD refer to \cite{luchtenburg2009introduction, holmes2012turbulence,sirovich1987turbulence}.  Recently, as seen in \cite{qian2022reduced}, a non-intrusive method was devised to identify a system of ODEs over POD space. In \cite{qian2022reduced}, it was assumed that $F$ has a representative system of ODEs that was at most quadratic in the temporal POD modes. Here we exemplify that the weak-SINDy technique can act as a non-intrusive method to create a surrogate system of ODEs over POD space. We start with a brief review of POD.

The goal of POD is to efficiently represent (project) a solution $u(x,t)$ to a PDE in a finite number of modes so that,
\[u(x,t) \approx \sum_{i=1}^N s_i(t)u_i(x).\]
 Here, we assume that $x\in \Omega\subset\joinrel\subset \mathbb{R}^n$ is compact and $t\in [0,T]$. In this section, we call $\{u_i\}_{i=1}^N$ the spatial modes and $\{s_i\}_{i=1}^N$ the temporal modes. To do this, we define an operator kernel $R(x,y)$ by 
\begin{equation}\label{eq:R-formula}
R(x,y):=\frac{1}{T}\int_0^T u(x,t)\otimes u(y,t)\,dt\:,
\end{equation}
which can be considered as the time-average of $u(x,t)\otimes u(y,t)$. If this function $R(x,y)$ is continuous,  thus square integrable over the compact space $\Omega\times \Omega$, we have that $R$ is the kernel of a Hilbert-Schmidt integral operator. We can then define an operator $\mathcal{R}$ as
\begin{equation}\label{eq:R-operator_formula}
[\mathcal{R}v](x)=\int_{\Omega} R(x,y) v(y) \,dy \:,
\end{equation}
which is self-adjoint since $R(x,y) = R(y,x)$. Therefore, the spectral theorem guarantees that there exists an orthonormal basis $\{u_i(x)\}_{i=1}^\infty$ of eigenfunctions of $\mathcal{R}$, which are considered as the spatial modes of $u(x,t)$. 
By orthogonality of $\{u_i(x)\}_{i=1}^\infty$, the temporal modes can be defined as
\begin{equation}\label{eq:temporal-modes}
s_i(t) = \int_\Omega u(x,t) u_i(x)\, dx = \langle u(x,t),u_i(x)\rangle_\Omega,
\end{equation}
where the inner product is taken over $\Omega$. In this paper, we refer to the temporal modes defined by Eq.~\eqref{eq:temporal-modes} as the \emph{exact} temporal modes.

In the next sections, we will exemplify the use of the weak-SINDy technique in finding a surrogate system of ODEs that govern the temporal POD modes. We split this into two sections, one in which weak-SINDy uses the exact temporal modes as input data, and another in which proxy temporal modes are defined and used as input data. The difference between the two modes are demonstrated in the numerical experiments in Section~\ref{sec:Numerical Experiments}.

\subsection{Generating a surrogate ODE system for the exact POD modes}
\label{v1}

With the POD modes defined in Section~\ref{subsec:POD_discretization}, the solution can be expanded as $u(x,t) = \sum_{i=1}^\infty s_i(t)u_i(x)$. From Eq.~\eqref{eq:temporal-modes}, we have 
\begin{equation}
    \dot{s_i}(t)=\ip{\partial_t u(\cdot, t)}{u_i}_{\Omega}=\ip{F(u(\cdot, t))}{u_i}_{\Omega},\quad i=1, 2, \dots\:.
\end{equation}
Assuming that the evolution of the first $N$ temporal modes is governed by an ODE system with some dynamics $f\colon\mathbb{R}^N\to\mathbb{R}^N$, i.e.,
\begin{equation}\label{eq:exact_POD_system}
\dot{\mathbf{s}}(t)= f(\mathbf{s}(t))\:
\quad\text{with}\quad
\mathbf{s}(t):=\begin{bmatrix}s_1(t)& \cdots & s_N(t)\end{bmatrix}^T\:,
\end{equation}
then, when $f$ is either unknown or expensive to evaluate, the weak-SINDy method can be applied to construct a surrogate model for $f$ from data $\mathbf{s}$ as discussed in Section~\ref{sec:ode_system_analysis}.
 
We note that $f$ is \emph{not} an approximation of the differential operator $F$ that governs the solution $u$. However, constructing an approximation of $f$ allows us to model the temporal modes $\mathbf{s}(t)$, while the spatial modes $u_i(x)$ remain constant.

Applying the weak-SINDy method to Eq.~\eqref{eq:exact_POD_system} leads to the surrogate model
\begin{equation}\label{eq:PODmodel}
\textstyle
\dot{s_i}^{\dagger} = \sum_{\mathbf{j}=0}^J\bm{w}_{\mathbf{J}}^{(i)} \cdot \varphi_{\mathbf{j}}(\mathbf{s}^{\dagger})\:,\quad i = 1,\dots, N,
\end{equation}
where the weights $\bm{w}_{\mathbf{J}}^{(i)}$ are computed as given in Eq.~\eqref{eq:ODE_system_LS}. 
In the remainder of this paper, we refer to the solutions ${s_i}^{\dagger}$ of Eq.~\eqref{eq:PODmodel} as the \emph{surrogate} temporal modes, which can be used to construct an approximate PDE solution $u(x,t)\approx u^{\dagger}(x,t):=\sum_{i=1}^N s_i^\dagger (t) u_i(x)$.

\subsection{Generating a surrogate ODE system for proxy POD modes}\label{v2}
When the exact temporal modes $\mathbf{s}(t)$ are unavailable, they may be approximated by proxy temporal modes $\mathbf{s}^*(t)$ governed by the following equation
\begin{equation}
\textstyle
\dot{s}_i^*
=\left \langle \partial_t\left(\sum_{\ell=1}^N s_\ell^*(t) u_\ell\right), u_i\right \rangle_\Omega  
= \left\langle F\left(\sum_{\ell=1}^N s_\ell^*(t) u_\ell\right), u_i\right \rangle_\Omega\:, \quad i=1, \dots, N\:,  
\end{equation}
which can be written as the ODE system
\begin{equation}\label{eq:proxy_POD_system}
\textstyle
    \dot{\mathbf{s}}^* = g(\mathbf{s}^*)\:
    \quad\text{with}\quad 
    g_i := \left\langle F\left(\sum_{\ell=1}^N s_\ell^*(t) u_\ell\right), u_i\right \rangle_\Omega\:. 
\end{equation}
Similar to the case of using exact POD modes, applying the weak-SINDy method to Eq.~\eqref{eq:proxy_POD_system} leads to a surrogate model for the proxy POD modes.  

\begin{remark}
    We note that the exact and proxy POD modes are equivalent at the limit when the expansion degree $N\to\infty$. 
\end{remark}

\section{Numerical Experiments}\label{sec:Numerical Experiments}

In this section, we demonstrate the accuracy of surrogate models generated by the weak-SINDy method for several ODEs and verify the theoretical results proved in Sections~\ref{sec:ODE case} and \ref{sec:system of odes}. 
In sections~\ref{sec:smooth_ODE} and \ref{sec:sobolev_ODE}, we apply weak-SINDy to scalar ODEs with various regularity. In section~\ref{sec:POD_PDE}, we consider the case when weak-SINDy is applied to a system of ODEs from POD approximations of a PDE, as discussed in Section~\ref{sec:system of odes}.
The ODEs considered in this section are solved using the {\tt{odeint}} solver from the {\tt{scipy}} package in Python with a very restrictive maximum time step size ($10^{-6}$) to minimize the effect of numerical discretization errors.
The numerical solutions, $x(t)$, are then used to compute the matrix $\mathbf{G}$ and vector $\mathbf{b}$ defined in Eq.~\eqref{weak-SINDy_form}, where the integration-by-parts technique in Eq.~\eqref{eq:integration_by_parts} is used to avoid evaluations of $\frac{dx}{dt}$, and the integrals are performed using the function {\tt{simpson}} in the {\tt{scipy}} package \cite{2020SciPy-NMeth}.
The linear system \eqref{weak-SINDy_form} is then solved using the least squares solver {\tt{lstsq}} provided in the {\tt{numpy}} Python package \cite{harris2020array}.
In these numerical tests, monomials are used as the basis functions $\{\varphi_j\}_{j=0}^J$.
As for the test functions, we consider both the Legendre polynomials and the Fourier basis to demonstrate that the convergence results in the test space hold for any orthogonal test functions.
The choice of test functions $\{\psi_k\}_{k=0}^K$ will be specified in each experiment. 

\subsection{Smooth scalar ODE}\label{sec:smooth_ODE}
We first consider a simple example of a scalar ODE
\begin{equation}\label{eq:scalarODE_smooth}
    \dot{x} = f(x) := e^{-2x},\qquad x(0) = 0\:,
\end{equation}
over the time interval $[0,1]$, which has a smooth analytic solution $x_a(t) = \frac{1}{2} \ln(2t+1)$. 
To be consistent to other numerical tests in this section, we used a numerical solution $x(t)$ to Eq.~\eqref{eq:scalarODE_smooth} on a uniform grid in $t$ with step size $10^{-4}$ as the data for constructing surrogate models for Eq.~\eqref{eq:scalarODE_smooth}.

\subsubsection{Approximation errors}\label{sec:approximation_error_Legendre}
In this section, we use the normalized Legendre polynomials as test functions and explore the effect of the degrees $J$ and $K$ of the basis and test functions to various approximation errors of the resulting surrogate model.
To demonstrate the convergence properties of approximation errors, we first recall the main triangle inequality \eqref{eq:triangle_ineq} used in the analysis in Section~\ref{sec:ODE case},
\begin{align*}
\underbrace{\|f\circ x - p_{J,K}^*\circ x\|_{L^2([a,b])}}_{\text{(L)}}
\leq& 
\underbrace{\|f\circ x - \projK(f\circ x)\|_{L^2([a,b])}}_{\text{(R1)}}
+ \underbrace{\|\projK(f\circ x-p^*_{J,K}\circ x)\|_{L^2([a,b])}}_{\text{(R2)}}\\
&+\underbrace{\|p^*_{J,K}\circ x - \projK(p^*_{J,K}\circ x)\|_{L^2([a,b])}}_{\text{(R3)}},
\end{align*}
and explore how the choices of $J$ and $K$ affect the magnitude of each term.

Figure~\ref{fig:JKchanges} reports the total approximation error (L) in solid blue lines and the upper bound (R1)$+$(R2)$+$(R3) in dashed orange lines for $K=5,\,10,\,20$ and $J=1,\dots,30$.
In each figure, a vertical black dashed line is plotted at $J=K$. 
Figure~\ref{fig:all_three} shows the magnitudes of (R1), (R2), and (R3) under the same setting in green, blue, and red lines, respectively.

\begin{figure}[h]
    \centering
    \includegraphics[scale = 0.38]{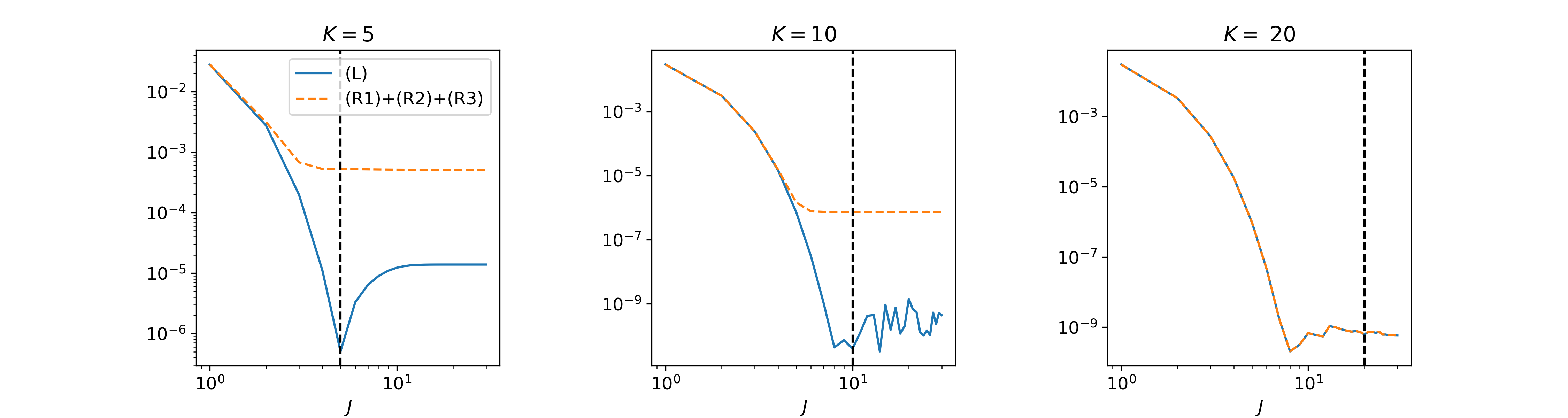}
    \caption{The total approximation error (L) [solid blue lines] and the upper bound (R1)$+$(R2)$+$(R3) [dashed orange lines] plotted versus the degree of basis functions $J=1,\dots,30$, for various degrees of test functions $K=$ 5, 10, and 20. A vertical black dashed line is plotted at $J=K$. }
    \label{fig:JKchanges}
\end{figure}
\begin{figure}[h]
    \centering
    \includegraphics[scale = 0.38]{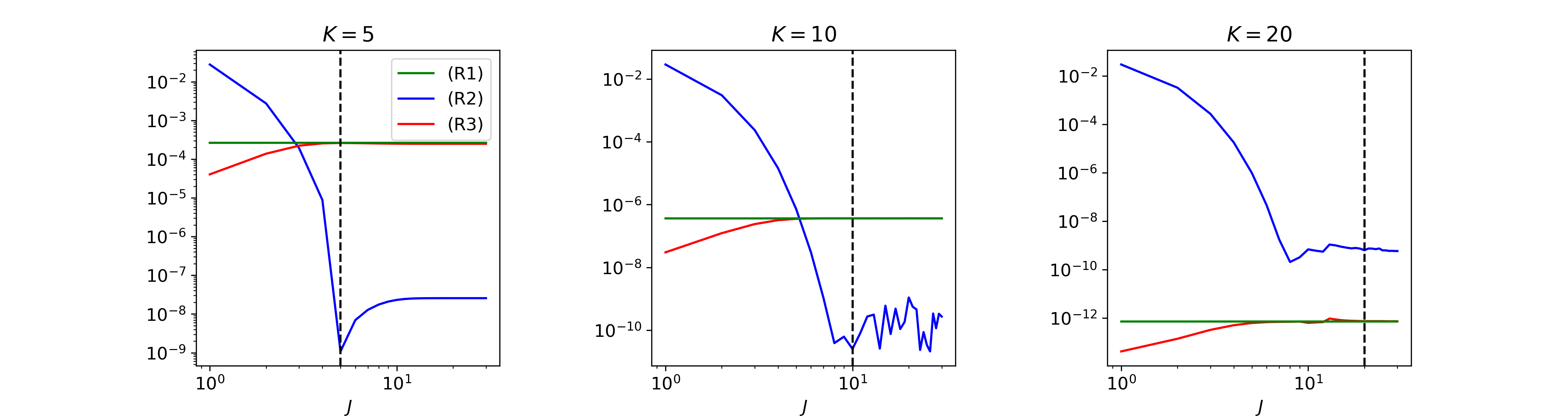}
    \caption{The error terms (R1), (R2), and (R3) plotted versus the degree of basis functions $J=1,\dots,30$ in green, blue, and red lines, respectively, for various degrees of test functions $K=$ 5, 10, and 20. A vertical black dashed line is plotted at $J=K$. }
    \label{fig:all_three}
\end{figure}

The results in Figure~\ref{fig:JKchanges} shows spectral convergence of the total approximation error (L) when $J\leq K$. We observe that the approximation error stops improving as $J$ increases when $J>K$, since the matrix $\mathbf{G}$ no longer satisfies the full column rank assumption considered in Section~\ref{subsec:assumption} and thus the linear system Eq.~\eqref{weak-SINDy_form} becomes an under-determined system.
From Figure~\ref{fig:all_three}, we notice that (L) closely reflects the approximation error in the test space (R2), which decays spectrally in this smooth example as stated in Proposition~\ref{prop:minproblem}. In the case when $K=20$, the value of (R2) stops decreasing at around $10^{-9}$ due to numerical integration errors. When the value of $K$ is low, the upper bound is dominated by (R1) and (R3), and the values of which decrease when $K$ grows as indicated in Lemma~\ref{lem:polyapproxindeg} and Proposition~\ref{prop:R3_bound}.

\subsubsection{Approximation error with Fourier test functions}\label{sec:approximation_error_Fourier}
In this section, we repeat the tests in Section~\ref{sec:approximation_error_Legendre} with the test functions chosen to be part of the standard Fourier basis given by $\{\psi_k(t)\} := \{1\}\cup\{ \sqrt{2}\cos(2\pi k t)\}_{k=1}^K\cup\{\sqrt{2}\sin(2\pi k t) \}_{k=1}^{K}$. 

Figure~\ref{fig:fourier} shows the values of the total approximation error (L), the individual terms (R1), (R2), (R3), and the upper bound (R1)$+$(R2)$+$(R3) for $J=1,\dots,30$ and $K=10$, which results in $2K+1=21$ Fourier test functions.

\begin{figure}[h]
    \centering
     \begin{subfigure}{.5\textwidth}
    \includegraphics[width=.9\linewidth]{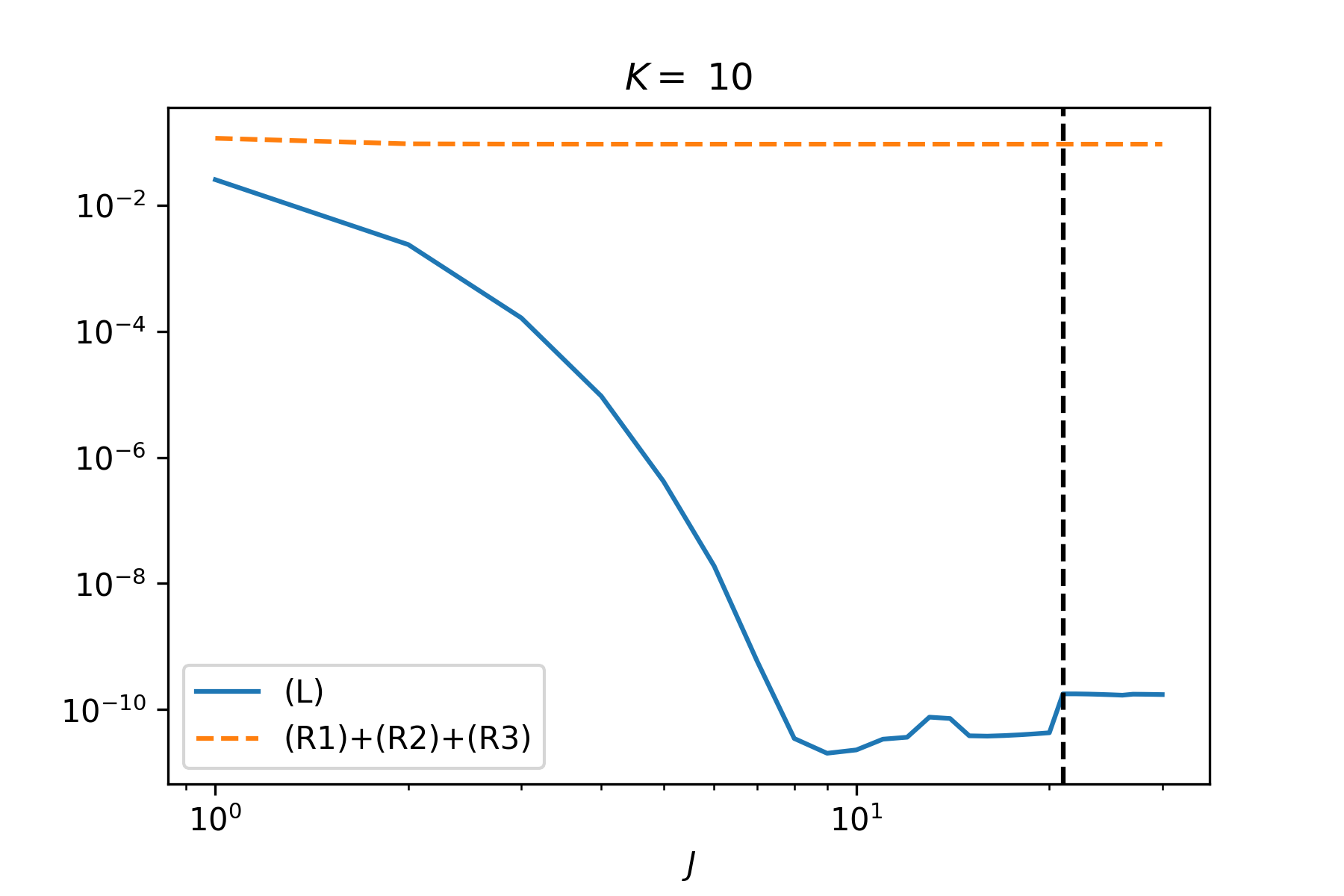}
    \caption{Total approximation error and upper bound}
    \end{subfigure}%
    \begin{subfigure}{.5\textwidth}
    \includegraphics[width=.9\linewidth]{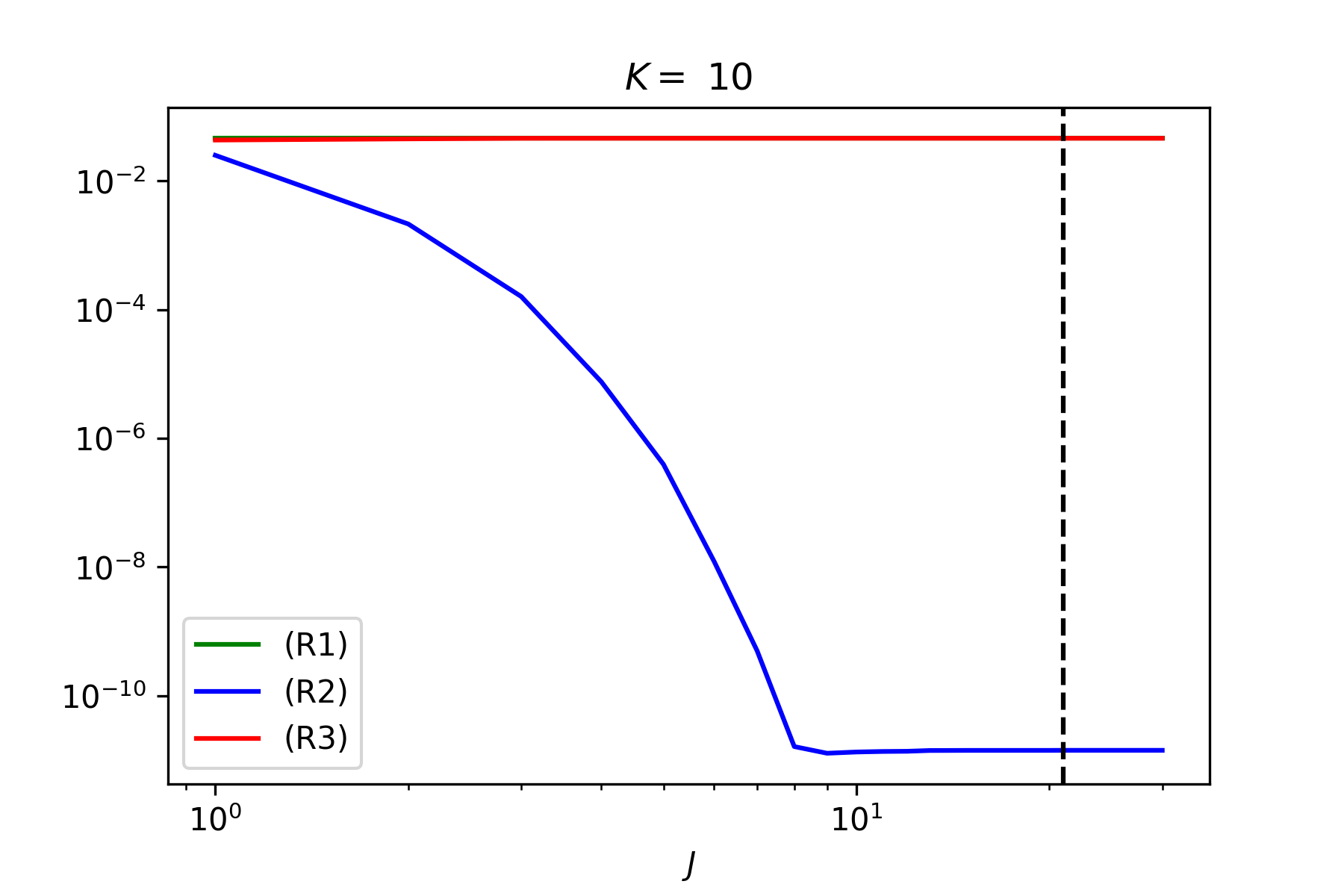}
    \caption{Individual error terms}
    \end{subfigure}
    \caption{Total approximation error (L), the individual terms (R1), (R2), (R3), and the upper bound (R1)$+$(R2)$+$(R3) plotted versus the degree of basis functions $J=1,\dots,30$ with the degree of Fourier basis $K=10$, which results in $2K+1=21$ Fourier test functions.}
    \label{fig:fourier}
\end{figure}

From the results in Figure~\ref{fig:fourier}, we observe that the total error (L) still closely resembles the behavior of the projection error (R2) in the test space. In the case of Fourier test functions, the value of (R2) still converges spectrally when $J\leq K$ as when using Legendre polynomials as test functions (cf. Figure~\ref{fig:all_three}), since the convergence rate of (R2) is independent to the choice of test functions, as shown in Proposition~\ref{prop:minproblem}.
We also note that in Figure~\ref{fig:fourier}, (R2) saturates at around $10^{-11}$ due to numerical integration errors.
Since the dynamics $f$ in Eq.~\eqref{eq:scalarODE_smooth} is not periodic, expansions with Fourier test functions converge much slower than the ones using Legendre polynomials, which significantly affects the values of (R1) and (R3). As shown in Figure~\ref{fig:fourier}, the error bound is still dominated by (R1) and (R3) even when $2K+1=21$ Fourier test functions were used. The terms (R1) and (R3) are only expected to decrease when $K$ increases and are not expected decrease when $J$ increases.

\subsubsection{Convergence of the solution}\label{experiment1c}
In this section, we verify the convergence property in Proposition~\ref{prop:Lipschitz}, which bounds the difference between $x$ and $\hat{x}$, the solutions to the ODE Eq.~\eqref{eq:scalarODE_smooth} and the associated surrogate model $\dot{\hat{x}}=p^*(\hat{x})$, respectively.
Specifically, we demonstrate that, in a time interval $[a,b]$ that satisfies $L(b-a)<1$ with $L$ the Lipschitz constant of $p^*$, the error $\|\hat{x} - x\|_{L^{\infty}([a,b])}$ is indeed bounded by $\sfrac{\varepsilon (b-a)^{1/2}}{(1-L(b-a))}$, where $\varepsilon = \|f\circ x - p\circ x\|_{L^2[a,b]}$ as given in Proposition~\ref{prop:Lipschitz}.
Here the test functions are chosen to be the Fourier basis as in Section~\ref{sec:approximation_error_Fourier}. We focus on the case that $K=20$ and $J=5$ with the approximate polynomial $p_{J=5, K=20}^*$ from weak-SINDy denoted as $p^*$ for simplicity. In this experiment, the solution to Eq.~\eqref{eq:scalarODE_smooth} was calculated over the time interval $[0,3]$ using a uniform grid with step-size $3\times 10^{-4}$.

To demonstrate the bound in Proposition~\ref{prop:Lipschitz}, we set a threshold value $s \leq 1$ and compute $\tau\in [0,3]$ such that $\tau\,L = s$, where the Lipschitz constant $L$ of $p^*$ is computed over the range of the analytic solution $x(t)= \frac{1}{2}\ln(2t+1)$ over time interval $[0,3]$. 
With $\varepsilon = \|f\circ x - p^*\circ x\|_{L^2([0,3])}$, the error bound over time frame $[0,\tau]$ is then given by $\sfrac{\varepsilon\cdot \tau^{1/2}}{(1-s)}$. 

Figure~\ref{fig:Linf} shows the computed solution error $|x(t)-\hat{x}(t)|$ and the upper bound provided by Proposition~\ref{prop:Lipschitz} over time interval $[0,\tau]$. 
Here the upper bound is calculated with $\varepsilon\approx 0.002$, $s=0.8$, and $\tau\approx 0.4$.
The result validates the upper bound while suggesting that the estimate may not be sharp.
 
\begin{figure}[h]
    \centering
    \includegraphics[scale = 0.45]{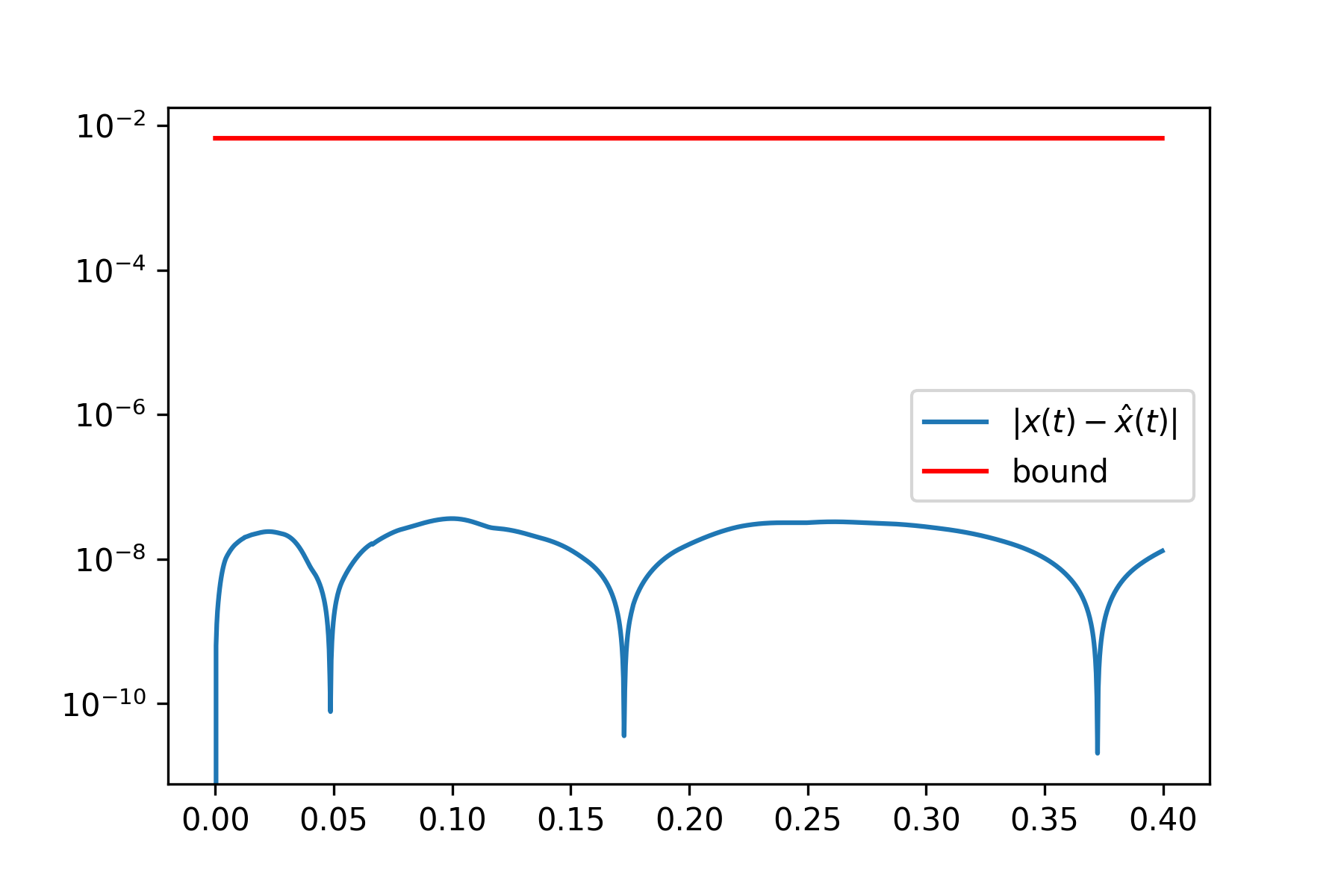}
    \caption{Solution error $|x(t)-\hat{x}(t)|$ (blue) and the estimate (red) given in Proposition~\ref{prop:Lipschitz} are plotted on time interval  $[0,0.4]$.}
    \label{fig:Linf}
\end{figure}

\subsection{Scalar ODE -- varying regularity}
\label{sec:sobolev_ODE} 
We next consider scalar ODEs of the form
\begin{equation}\label{eq:sobolev_ode}
\dot{x}(t) = g_\alpha(x(t)),\quad x(0) = 2, \quad\text{with}\quad 
g_\alpha(x) = \mathbf{1}_{[-\frac{\pi}{2},\frac{\pi}{2}]}\cos^\alpha(x)-\frac{1}{2}\:,
\end{equation}
where $\mathbf{1}$ denotes the indicator function and $\alpha\geq0$ is a tunable parameter for the regularity of $g_\alpha$. Specifically, it can be shown that $g_\alpha\in H^{\alpha+\sfrac{1}{2}}(X)$ for $\alpha\geq0$. 
To ensure that the solution $x$ exercises the non-smooth part of the dynamics $g_\alpha$, we solve Eq.~\eqref{eq:sobolev_ode} on time interval $[0,2]$ on a uniform grid in $t$ with step size $10^{-4}$.  

To highlight one of the significant changes the regularity of $g_\alpha$ has on the projection error of weak-SINDy, we calculate $\|\projK(g_\alpha\circ x-p_{J,K}^*\circ x )\|_{L^2([0,2])}$ ((R2) term in Section~\ref{sec:approximation_error_Legendre}) as $J$ increases. 
Here Legendre polynomials were used for both the test functions $\{\psi_k\}_{k=0}^{20}$ and the basis functions $\{\varphi_j\}_{j=0}^J$, $J\in \{1,2\ldots, 20\}$.
For each $\alpha \in \{1,2,3,4\}$, Figure~\ref{fig:function-plots} plots $g_\alpha$ over the domain $[-2, 2]$. Figure~\ref{fig:varying_alpha-L2 diff} plots the projection error (R2) at $K = 20$ and varying basis function degree $J= 1,2,\ldots, 20$ for each $\alpha$, with additional dashed lines indicating the expected convergence rate in $J$ from Proposition~\ref{prop:minproblem}.  

The observed results confirms that the convergence rate for the projection error given in Proposition~\ref{prop:minproblem}.  

\begin{figure}[h]
\centering
\begin{subfigure}{.5\textwidth}
  \centering
  \includegraphics[width=.9\linewidth]{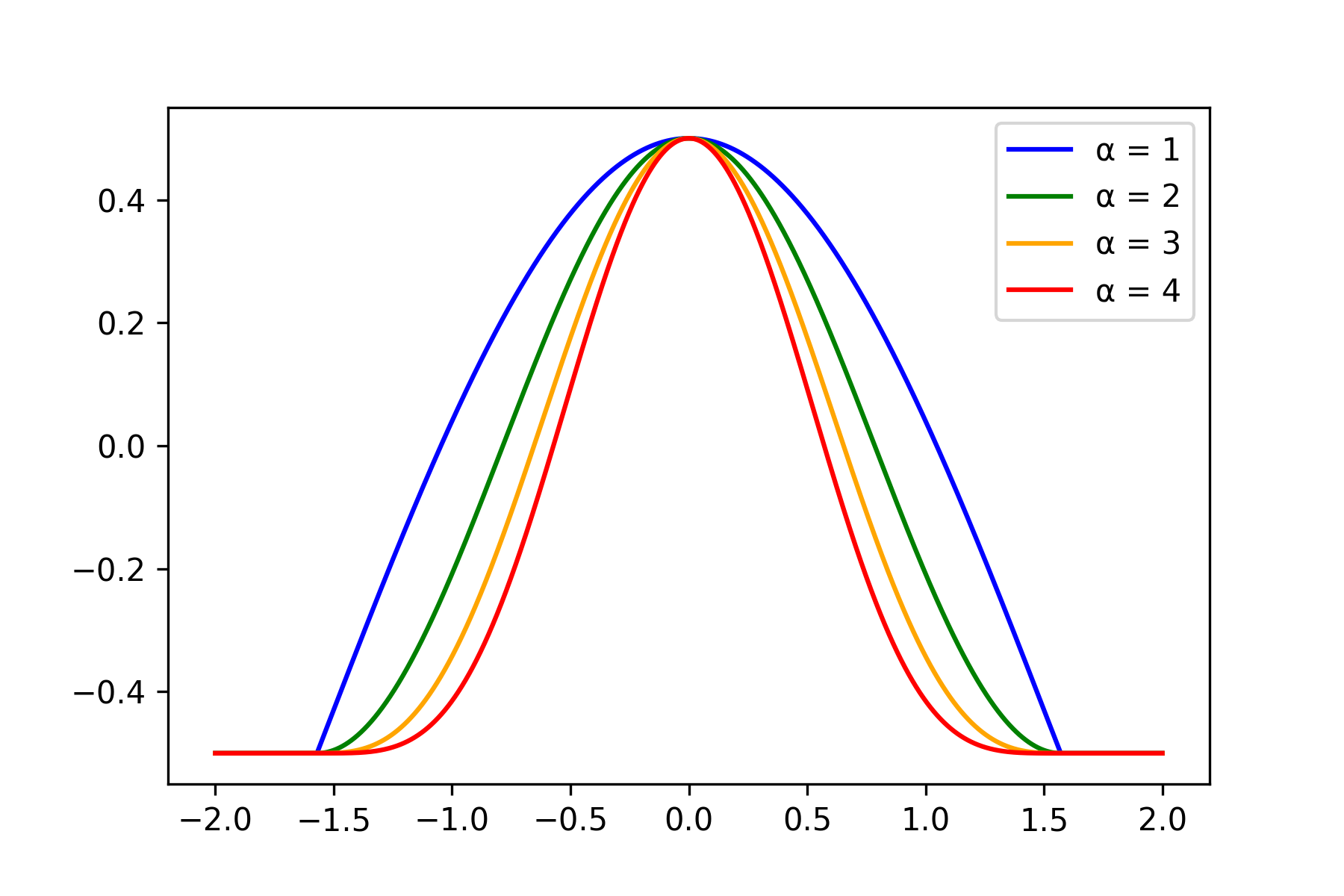}
  \caption{Dynamics $g_\alpha$}
  \label{fig:function-plots}
\end{subfigure}%
\begin{subfigure}{.5\textwidth}
  \centering
  \includegraphics[width=.9\linewidth]{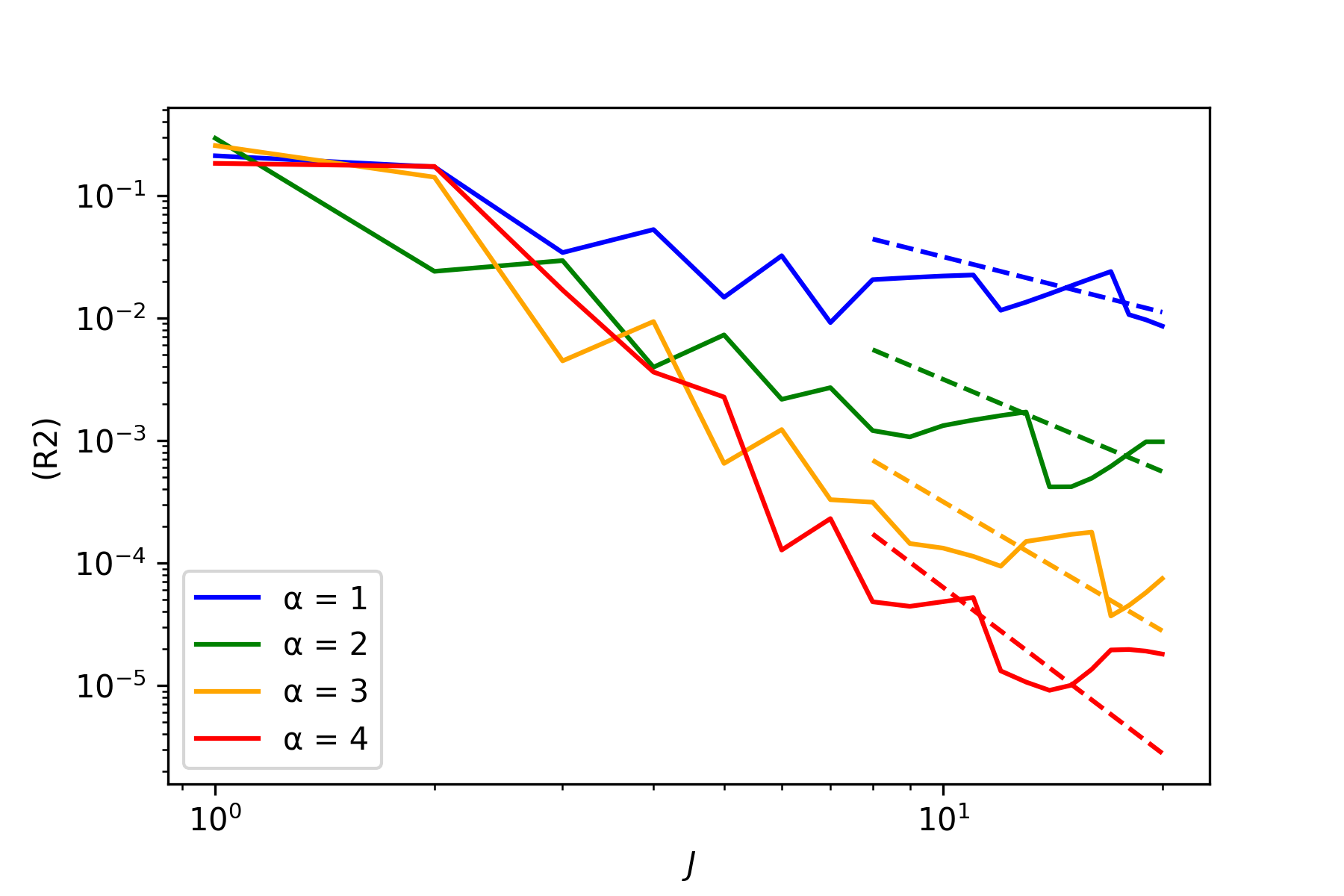}
  \caption{Projection error (R2) for each $g_\alpha$}
  \label{fig:varying_alpha-L2 diff}
\end{subfigure}
\caption{Figure~\ref{fig:function-plots} plots $g_\alpha$ over the domain $[-2, 2]$ for $\alpha \in \{1,2,3,4\}$. Figure~\ref{fig:varying_alpha-L2 diff} plots the projection error (R2) at $K = 20$ and varying degree $J= 1,2,\ldots, 20$ for each $\alpha$, with dashed lines indicating the expected convergence rate of (R2) in $J$ from Proposition~\ref{prop:minproblem}.  
}

\label{fig:fig2}
\end{figure}

\subsubsection{Approximation Errors -- varying regularity}\label{ex:approx_errors_var_reg}
In this section, we carry out the same computations as in Section~\ref{sec:approximation_error_Legendre} but on Eq.~\eqref{eq:sobolev_ode} with $\alpha = 1$ and $\alpha = 4$. Figures~\ref{fig:alpha1_errorbound} and \ref{fig:alpha4_errorbound} reports the total approximation error (L) in solid blue lines and the upper bound (R1)$+$(R2)$+$(R3) in dashed orange lines for $K=5,\,10,\,20$ and $J=1,\dots,30$. In each figure, a vertical black dashed line is plotted at $J=K$. Figures~\ref{fig:alpha1_all_three} and \ref{fig:alpha4_all_three} shows the magnitudes of (R1), (R2), and (R3) under the same setting in green, blue, and red lines, respectively. Figures~\ref{fig:alpha1_all_three} and \ref{fig:alpha4_all_three} also feature a purple dashed line to indicate the expected convergence rates. The discussion of Section \ref{sec:approximation_error_Legendre} carries over into this section as well. 

\begin{figure}[h]
    \centering
    \includegraphics[scale = 0.38]{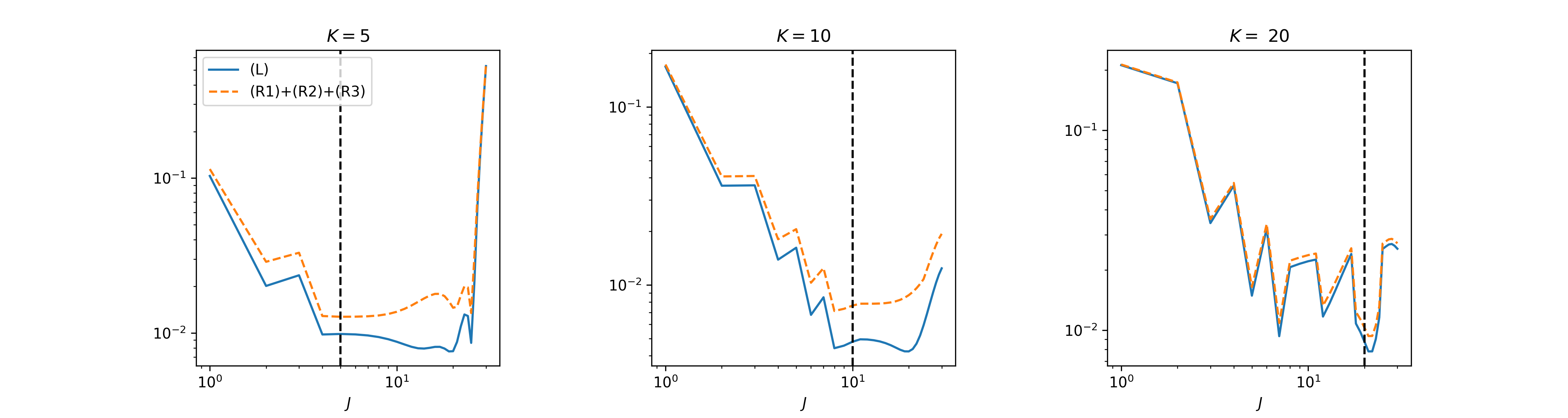}
    \caption{For $\alpha = 1$, the total approximation error (L) [solid blue lines] and the upper bound (R1)$+$(R2)$+$(R3) [dashed orange lines] are plotted versus the degree of basis functions $J=1,\dots,30$, for various degrees of test functions $K=$ 5, 10, and 20. A vertical black dashed line is plotted at $J=K$.}
    \label{fig:alpha1_errorbound}
\end{figure}

\begin{figure}[h]
    \centering
    \includegraphics[scale = 0.38]{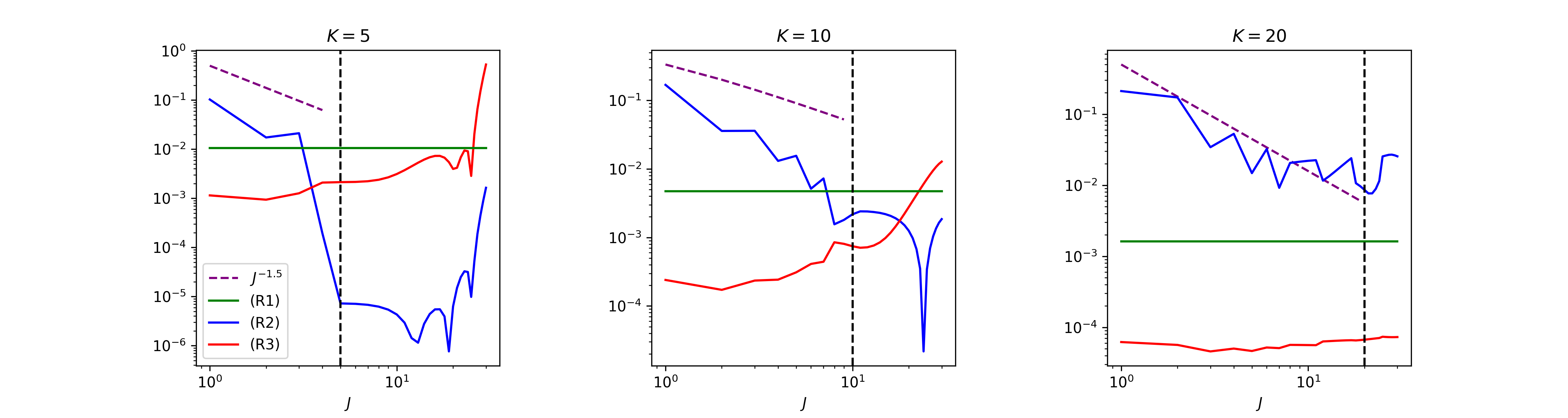}
    \caption{With $\alpha = 1$ the error terms (R1), (R2), and (R3) are plotted versus the degree of basis functions $J=1,\dots,30$ in green, blue, and red lines, respectively, for various degrees of test functions $K=$ 5, 10, and 20. A vertical black dashed line is plotted at $J=K$ and a purple dashed line indicates the expected convergence rate.}
    \label{fig:alpha1_all_three}
\end{figure}

\begin{figure}[h]
    \centering
    \includegraphics[scale = 0.38]{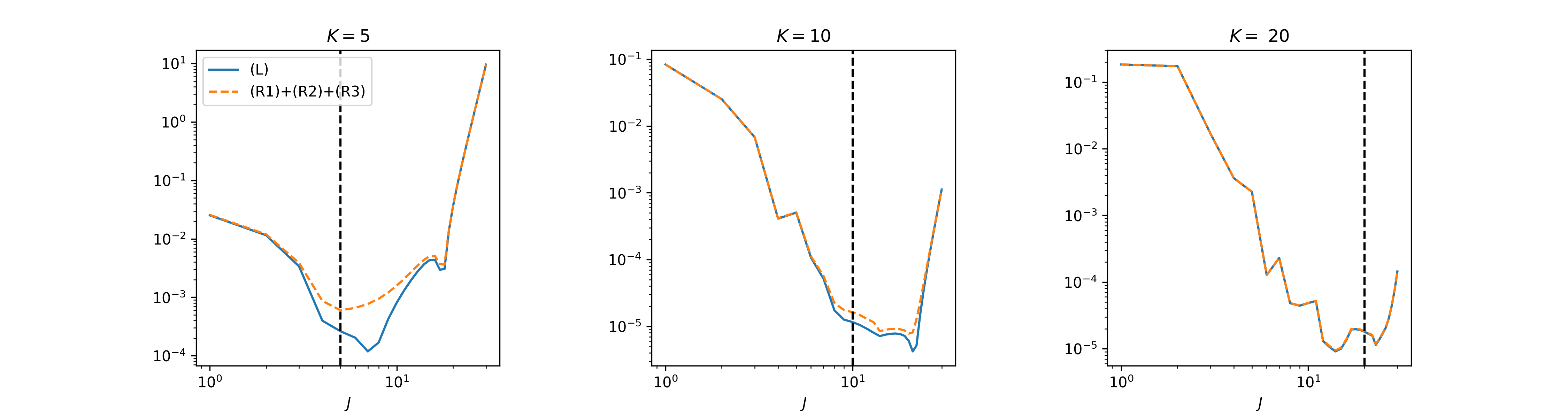}
    \caption{For $\alpha = 4$, the total approximation error (L) [solid blue lines] and the upper bound (R1)$+$(R2)$+$(R3) [dashed orange lines] are plotted versus the degree of basis functions $J=1,\dots,30$, for various degrees of test functions $K=$ 5, 10, and 20. A vertical black dashed line is plotted at $J=K$.}
    \label{fig:alpha4_errorbound}
\end{figure}

\begin{figure}[h]
    \centering
    \includegraphics[scale = 0.38]{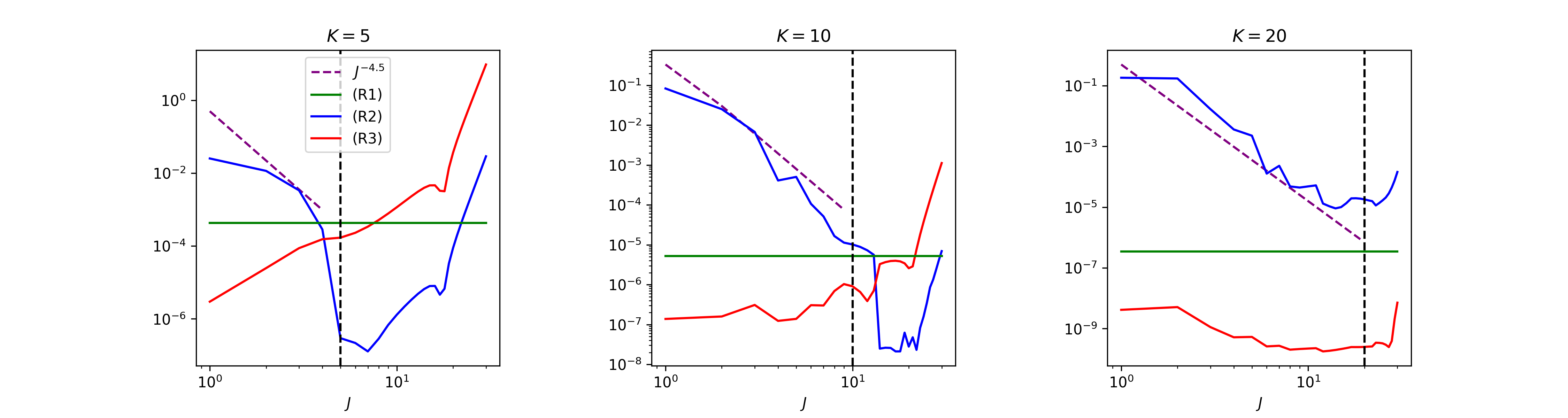}
    \caption{With $\alpha = 4$ the error terms (R1), (R2), and (R3) are plotted versus the degree of basis functions $J=1,\dots,30$ in green, blue, and red lines, respectively, for various degrees of test functions $K=$ 5, 10, and 20. A vertical black dashed line is plotted at $J=K$ and a purple dashed line indicates the expected convergence rate.}
    \label{fig:alpha4_all_three}
\end{figure}

\subsection{Weak-SINDy surrogate for POD discretization of a PDE}\label{sec:POD_PDE}
In this section we consider a one-dimensional diffusion equation: 
\begin{equation}\label{eq:1D_diffusion}
\partial_t u  = \beta(x) \partial_{xx}u 
\end{equation}
with initial and boundary conditions given by
\begin{equation}\label{eq:1D_diffusion_bdry_cond}
u(x, 0) = x + \sin(2\pi x) + 1, \qquad u(0, t) = 1, \qquad u(1,t) = 2, 
\end{equation}
over the domain $\Omega\times [0,T] = [0,1]\times [0,10]$. In each experiment, the solution $u(x,t)$ was numerically solved for using a FTCS scheme \cite{leveque2002finite}. The intervals $[0,1]$ and $[10,10]$ were divided into uniform grids with step size of $10^{-2}$ and $10^{-3}$ respectively. Step-sizes were chosen to maintain stability of the FTCS scheme with the chosen diffusivities. With the numerical solution $u(x,t)$, the spatial POD modes are calculated by solving a discretized version of Eq.~\eqref{eq:R-operator_formula} as a generalized eigenvalue problem using the \texttt{eigh} function in \texttt{scipy.linalg}.
The exact temporal modes are computed using Eq.~\eqref{eq:temporal-modes}, where the integrals are performed using {\tt{scipy.integrate.simpson}}. 
Again, the ODEs considered in this section are solved using the {\tt{scipy.integrate.odeint}} solver with a maximum time step size ($10^{-5}$) to minimize the effect of numerical discretization errors. For consistency between experiments, all POD approximations were done using two spatial modes.

We apply the weak-SINDy method to construct surrogate models for the exact and proxy temporal modes in Sections~\ref{sec:true_podmodes_constant_beta} and \ref{sec:proxy_podmodes_constant_beta}, respectively for Eq.~\eqref{eq:1D_diffusion} with constant diffusivity factor $\beta$.
In Section~\ref{sec:true_podmodes_varying_beta}, we construct a weak-SINDy surrogate model for the exact temporal modes of Eq.~\eqref{eq:1D_diffusion} with $\beta$ a discontinuous function in space.
In each experiment, a Fourier test basis  $\{\psi_k(t)\} := \{1\}\cup\{ \sqrt{2}\cos(2\pi k t)\}_{k=1}^K\cup\{\sqrt{2}\sin(2\pi k t) \}_{k=1}^{K}$ for $K=40$ and a monomial projection basis $\{\varphi_{\vec{j}}(x)\} := \{x_1^{j_1}\cdot \ldots \cdot x_N^{j_N}\}_{\vec{j}=(j_1,\ldots, j_N)}$ was used to generate the weak-SINDy surrogate models given by Eq.~\eqref{eq:PODmodel}.

\subsubsection{Exact POD modes of 1D diffusion equation with constant diffusivity}\label{sec:true_podmodes_constant_beta}
In this experiment, we consider Eq.~\eqref{eq:1D_diffusion} with a constant $\beta(x) := 5\times 10^{-3}$. Applying the weak-SINDy method to the exact temporal modes with max basis degree $J=1$ leads to the following coupled polynomial ODE model
\begin{equation}\label{eq:surrogate_model_pod_constant_beta}
\begin{alignedat}{2}
  \dot{s}_0 &=  7.56\times 10^{-4} -1.76\times 10^{-3}\,s_0 \, -1.09\times10^{-2}\, s_1\, -1.18\times 10^{-10}\, s_0s_1\\
  \dot{s}_1 &=   1.35\times 10^{-2} -3.14\times 10^{-2}\,s_0 \,  -1.96\times 10^{-1}\, s_1 -2.12\times 10^{-9}\,s_0s_1\:.
\end{alignedat}
\end{equation}
We denote the solution to Eq.~\eqref{eq:surrogate_model_pod_constant_beta} as $s_i^\dagger$, which is referred to as the surrogate temporal modes as introduced in Section \ref{v1}. 

In Figure~\ref{fig: diffusion solution and POD approxs}, we plot the original solution $u$, the POD reconstruction from spatial mode $u_i$ and exact POD mode $s_i$, and the surrogate solution $u^\dagger$ from spatial mode $u_i$ and surrogate temporal mode $s_i^\dagger$.

\begin{figure}[h]
\centering
\subfloat[Original solution $u$]{%
\centering\label{fig:original_soln}\includegraphics[width=0.3\textwidth]{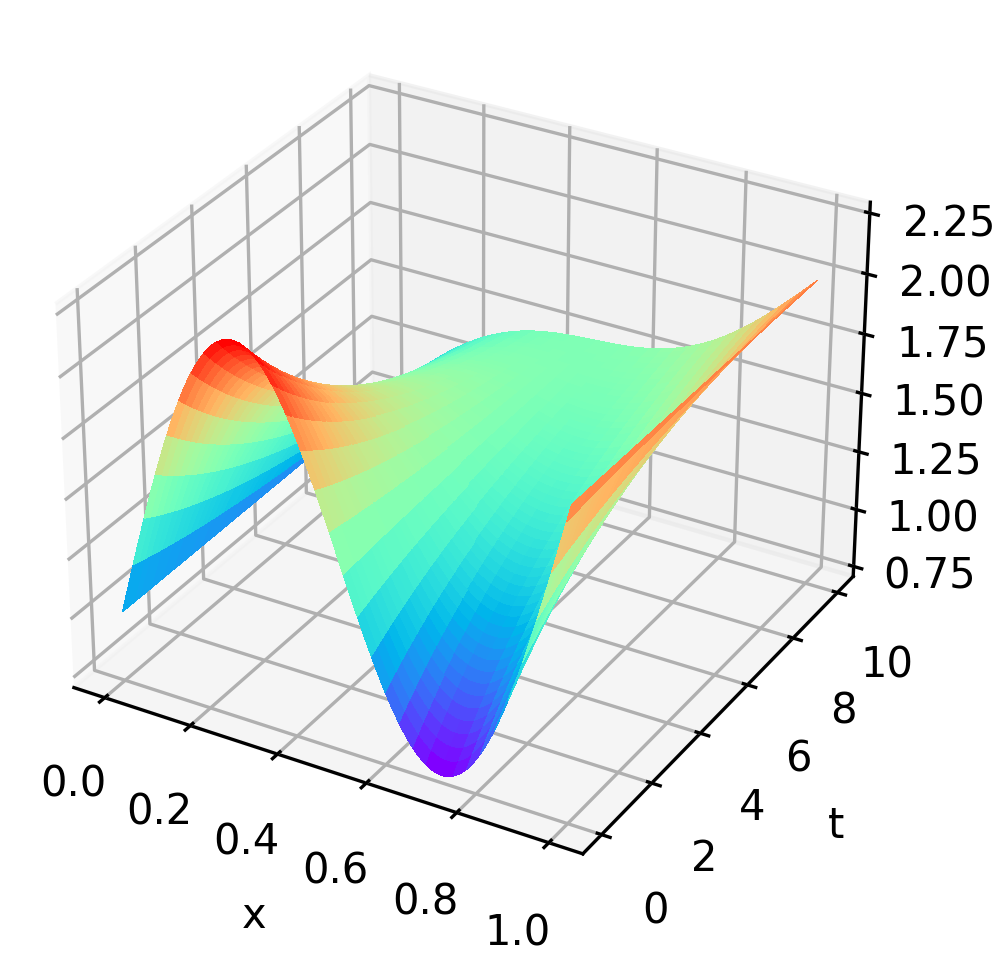}}
\quad
\subfloat[POD reconstruction of $u$]{%
\centering \label{fig:original_pod}\includegraphics[width=0.3\textwidth]{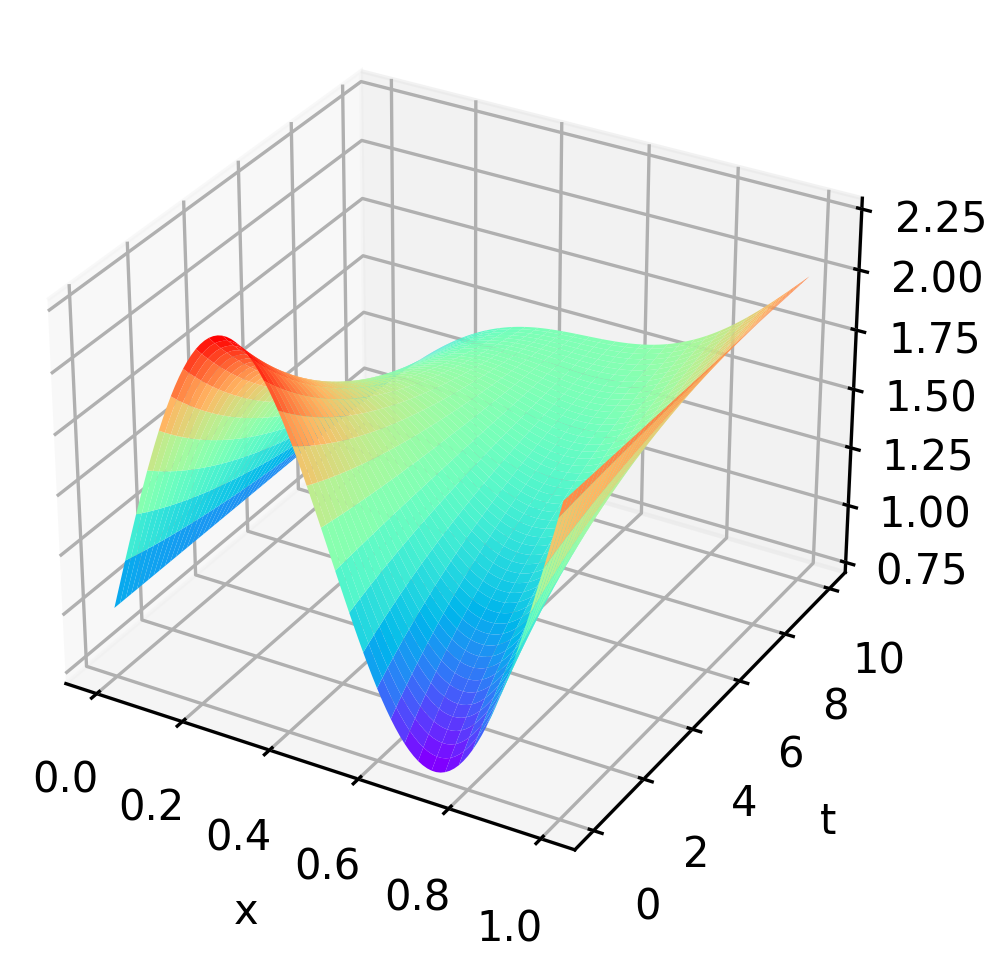}}
\quad
\subfloat[Surrogate POD approximation $u^\dagger$]{%
\centering\label{fig:pod_surrogate}\includegraphics[width=0.3\textwidth]{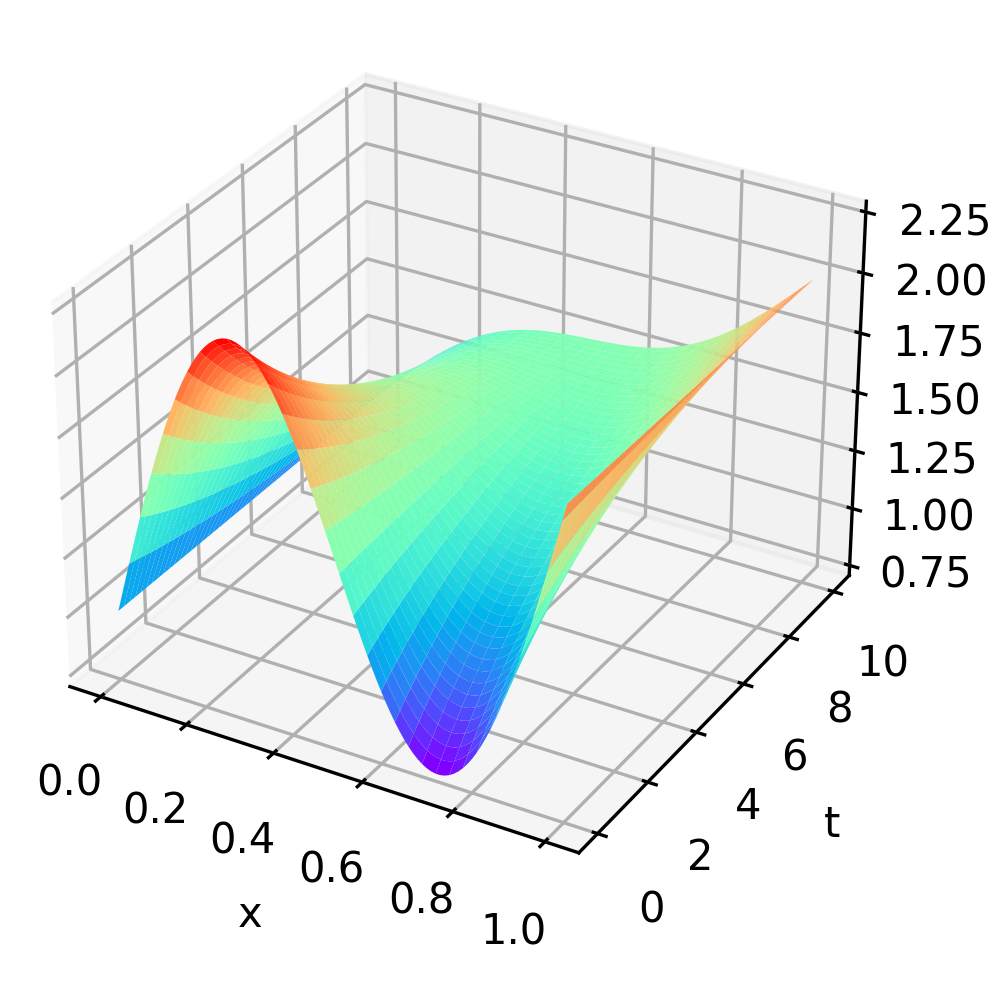}}
\caption{The original solution $u(x,t)$ to Eq.~\eqref{eq:1D_diffusion} with constant $\beta$, its POD reconstruction, and a POD approximation $u^\dagger$ from the surrogate temporal modes $s_i^\dagger$. }
\label{fig: diffusion solution and POD approxs}
\end{figure}

In Figure \ref{fig:Error_diff}, we plot the heat maps for the error between $u$ and its POD reconstruction and the error between the POD reconstruction and the surrogate solution $u^\dagger$ at each $x$ and $t$ in a logarithmic scale. 
For this tame example, we see that the POD reconstruction nearly replicates the original solution with a small number of modes, and the surrogate model provided by the weak-SINDy method gives accurate approximations to the exact temporal modes.

\begin{figure}[h]
\centering
\subfloat[Error between the POD reconstruction and $u(x,t)$]{%
\centering\label{fig:error - pod and original}\includegraphics[width=0.35\textwidth]{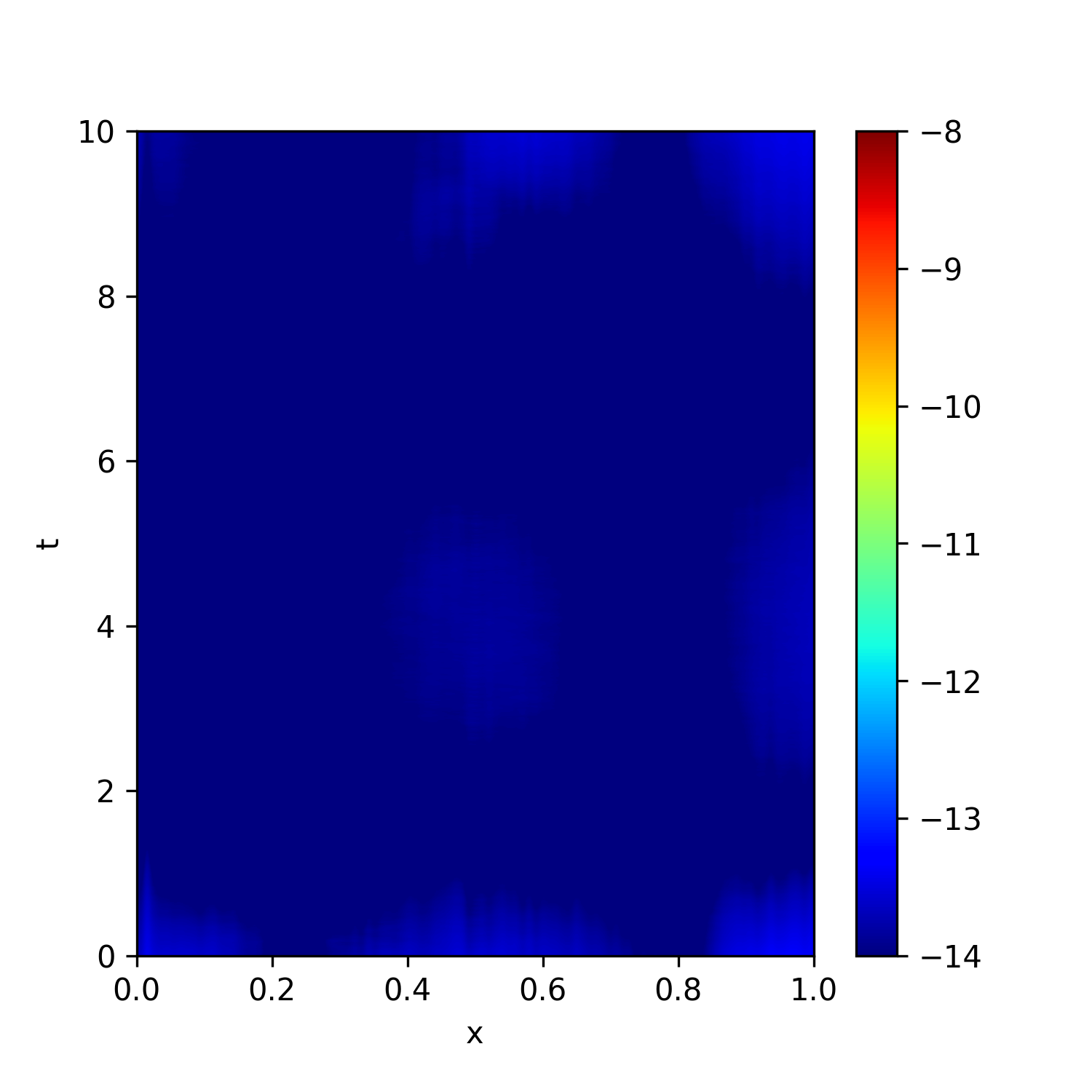}}
\qquad
\subfloat[Error between the surrogate POD approximation $u^\dagger$ and the POD reconstruction of $u$]{%
\centering\label{fig:error - pod and surrogate} \includegraphics[width=0.35\textwidth]{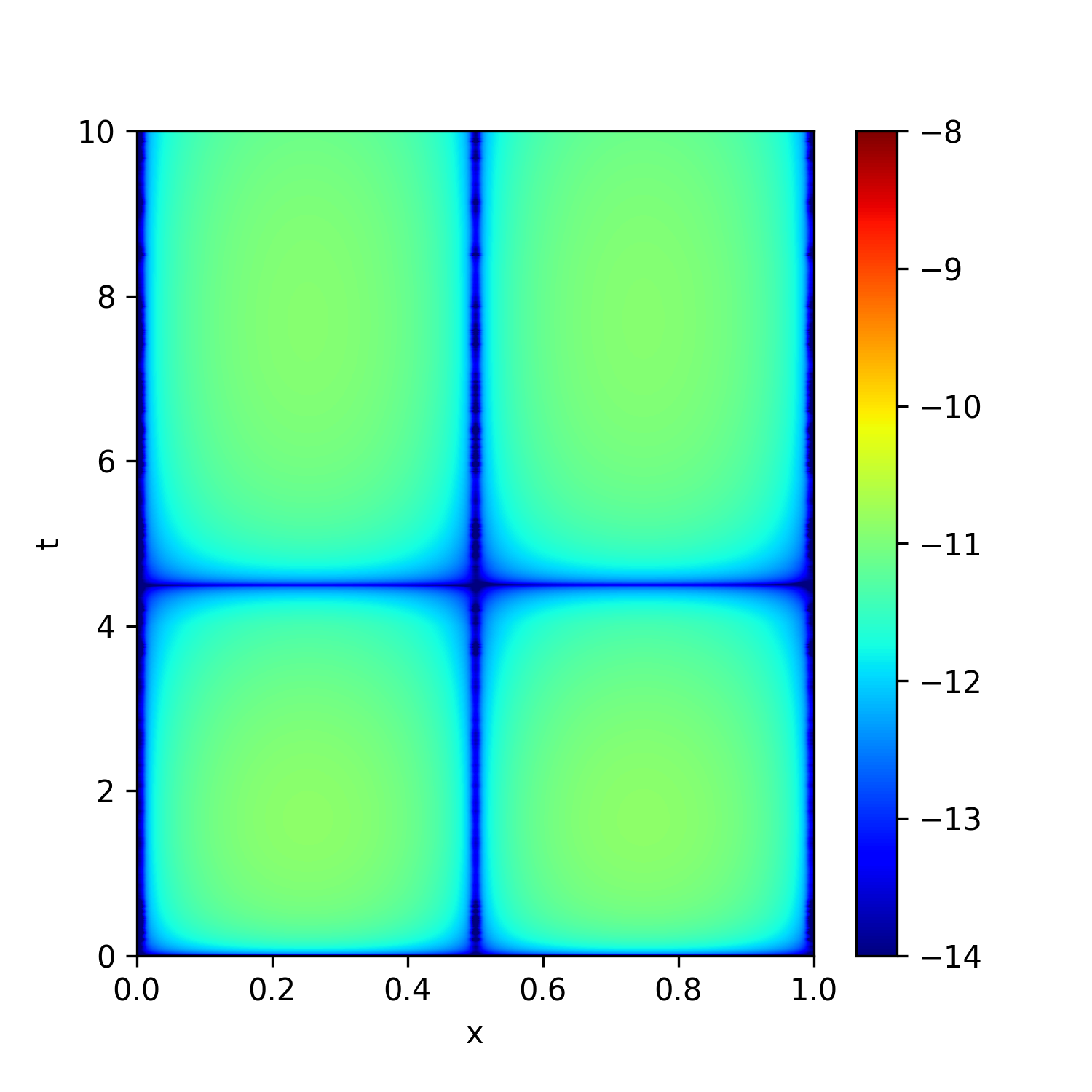}}
\caption{
The error between $u$ and its POD reconstruction (Figure~\ref{fig:error - pod and original}) and the error between the POD reconstruction and the surrogate solution $u^\dagger$ (Figure~\ref{fig:error - pod and surrogate}) at each $x$ and $t$ in a logarithmic scale. }
\label{fig:Error_diff}
\end{figure}

\subsubsection{Proxy POD modes of 1D diffusion equation with constant diffusivity}
\label{sec:proxy_podmodes_constant_beta}
In this section, we will perform the same experiment as in Section~\ref{sec:true_podmodes_constant_beta} but using the proxy temporal modes defined in Section~\ref{v2}. The proxy temporal modes are obtained by solving the ODE system
\begin{equation}
\label{proxypodeq}
\textstyle
\dot{s}_i^*= \beta \cdot \left \langle \sum_{\ell = 1}^N s_\ell^*(t)\partial_{xx}u_\ell(x), u_i\right\rangle_\Omega\:,\quad i=1,\dots,N\:,
\end{equation}
with the initial condition given in Section~\ref{sec:true_podmodes_constant_beta}. 
Here the derivatives are calculated via fourth order finite differences. 
Applying the weak-SINDy method to the proxy temporal modes with max basis degree $J=1$ leads to the following coupled polynomial ODE model 
 
\begin{equation}\label{eq:surrogate_model_proxy_pod_constant_beta}
\begin{alignedat}{2}
  \dot{s}_0 &=  7.65\times 10^{-4} -1.79\times 10^{-3}\,s_0\, -1.11\times 10^{-2}\, s_1\, -1.20\times 10^{-10}\, s_0s_1\\
  \dot{s}_1 &=  1.34\times 10^{-2}  -3.14 \times 10^{-2}\,s_0\, -1.96\times 10^{-1}\, s_1\, -2.08\times 10^{-9}\, s_0s_1 \:.
\end{alignedat}
\end{equation}
The solutions to Eq.~\eqref{eq:surrogate_model_proxy_pod_constant_beta} are again denoted by ${s}_i^\dagger$, which are the surrogate temporal modes from the proxy POD modes $s_i^*$.

In Figure~\ref{fig:proxy_error_comparison}, we plot the heat maps for the error between $u$ and the proxy POD reconstruction from proxy temporal modes $\mathbf{s}^*$ and the error between the proxy POD reconstruction and the surrogate solution $u^\dagger$ at each $x$ and $t$ in a logarithmic scale. 
As opposed to the case for exact temporal modes considered in Figure~\ref{fig:Error_diff}, here the proxy POD reconstruction error is observed to increase over time and is much greater than the surrogate error from weak-SINDy method, i.e., the surrogate POD approximation $u^\dagger$ is of similar quality to the proxy POD reconstruction from $\mathbf{s}^*$, which justifies the use of weak-SINDy method for constructing surrogate models for proxy POD modes.

\begin{figure}[h]
\centering
\subfloat[Error between the proxy POD reconstruction and $u(x,t)$]{%
\centering\label{fig:proxy_pod_U_error}
\includegraphics[width=0.35\textwidth]{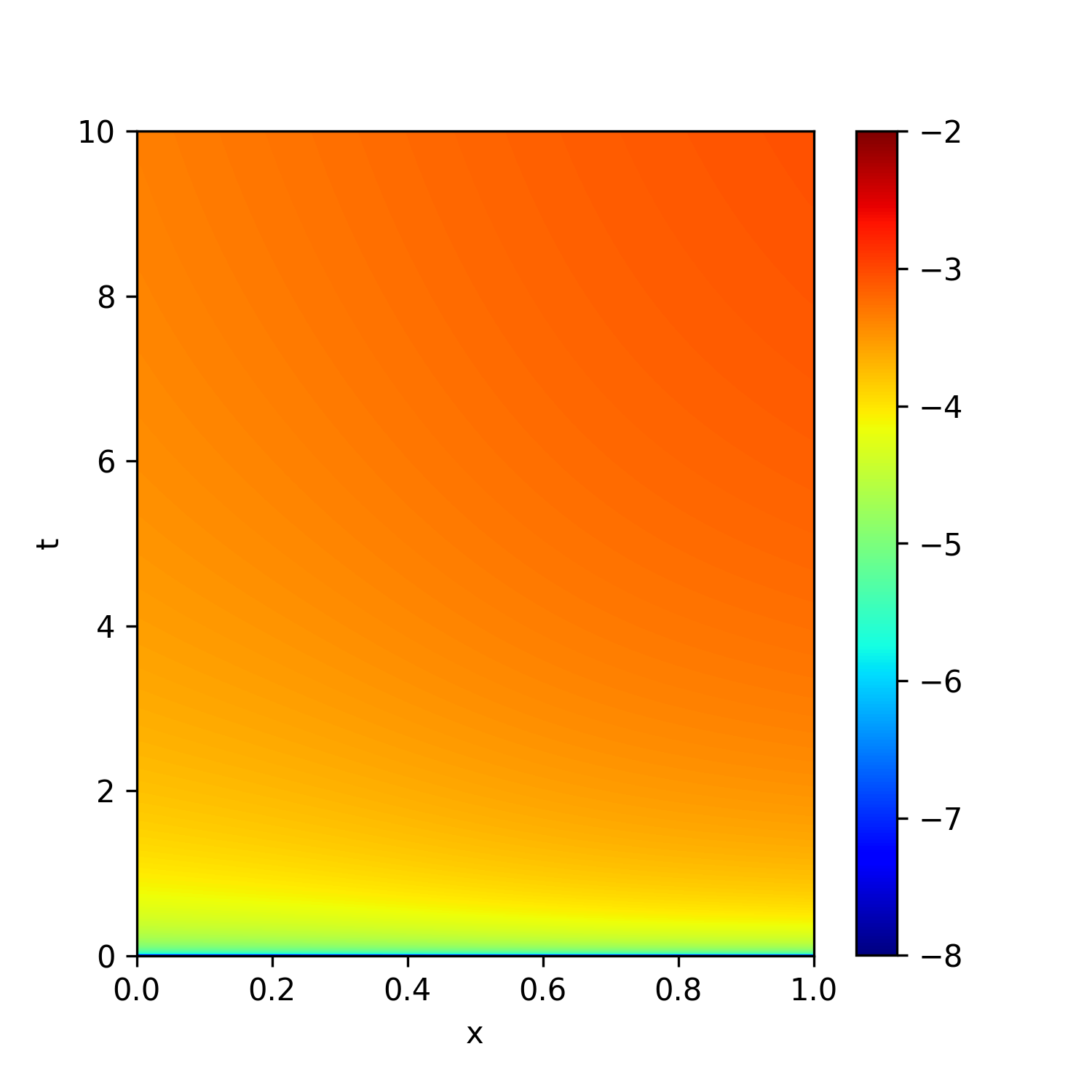}}
\qquad
\subfloat[Error between the surrogate POD approximation $u^\dagger$ and the POD reconstruction of $u$]{%
\centering\label{fig:proxy_to_proxyproxy_error} \includegraphics[width=0.35\textwidth]{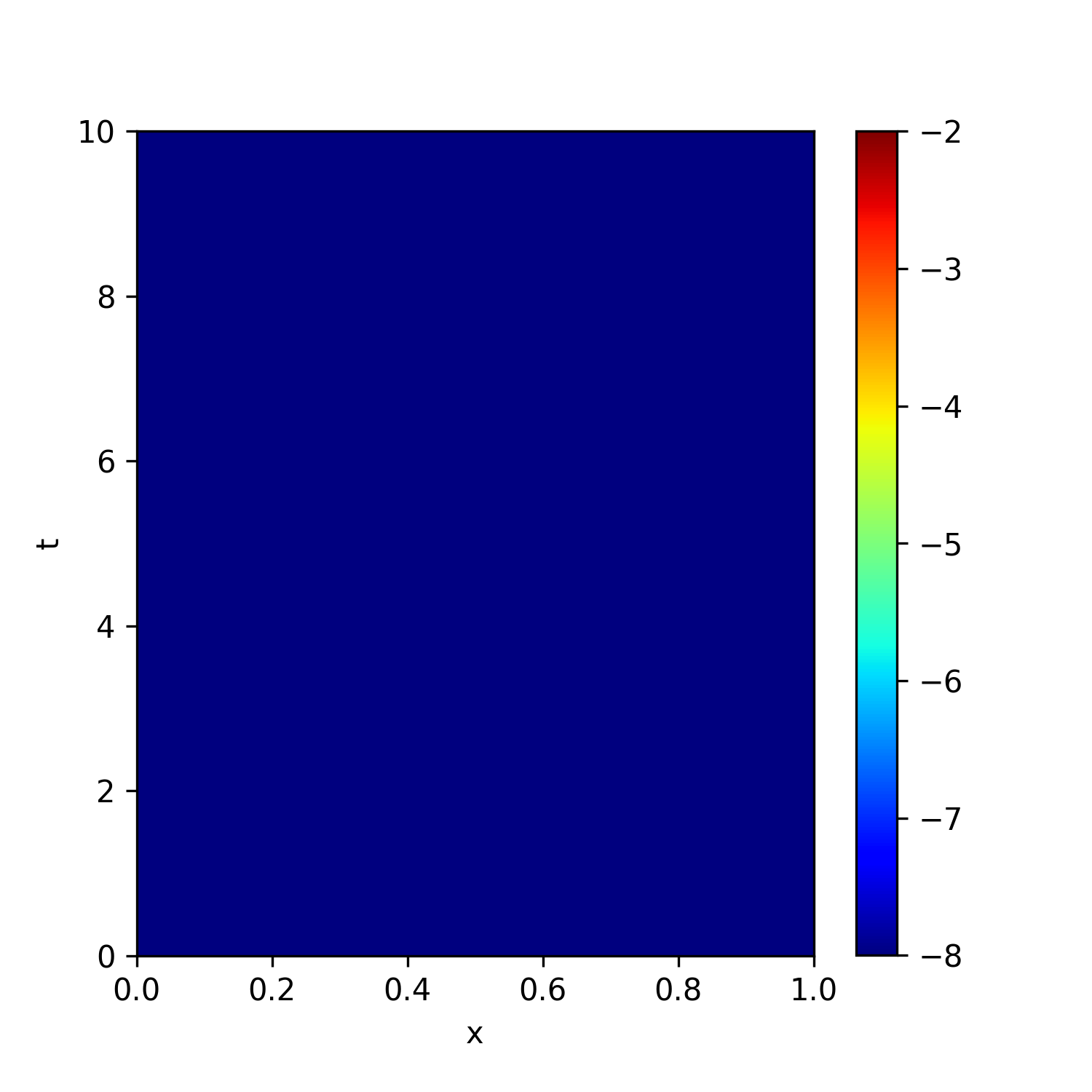}}
\caption{The error between $u$ and its proxy POD reconstruction (Figure~\ref{fig:proxy_pod_U_error}) and the error between the proxy POD reconstruction and the surrogate solution $u^\dagger$ (Figure~\ref{fig:proxy_to_proxyproxy_error}) at each $x$ and $t$ in a logarithmic scale. }
\label{fig:proxy_error_comparison}
\end{figure}

\subsubsection{Exact POD modes of 1D diffusion equation with discontinuous diffusivity}
\label{sec:true_podmodes_varying_beta}

In this section, we explore the case when the diffusivity factor $\beta$ is a discontinuous function in $x$, specifically we consider
\begin{equation}\label{eq:discont_beta}
\beta(x) = 5\times 10^{-3}\cdot H_{0.5}(x)\:
\quad\text{with}\quad
H_{0.5}(x)=
    \begin{cases}
        1 & \text{if } x \leq 0.5\\
        0 & \text{if } x >0.5
    \end{cases}\:.
\end{equation}
In Figure~\ref{fig:POD_modes_discont_beta}, we plot the first two spatial and temporal modes for the solution $u(x,t)$ to Eq.~\eqref{eq:1D_diffusion} with $\beta(x)$ given by Eq.~\eqref{eq:discont_beta}. 
As shown in the figure, the exact temporal modes are continuous while the spatial modes exhibit a discontinuity, which is due to the discontinuity of $\beta$ in space.

\begin{figure}[h]
\centering \includegraphics[width = .8\textwidth]{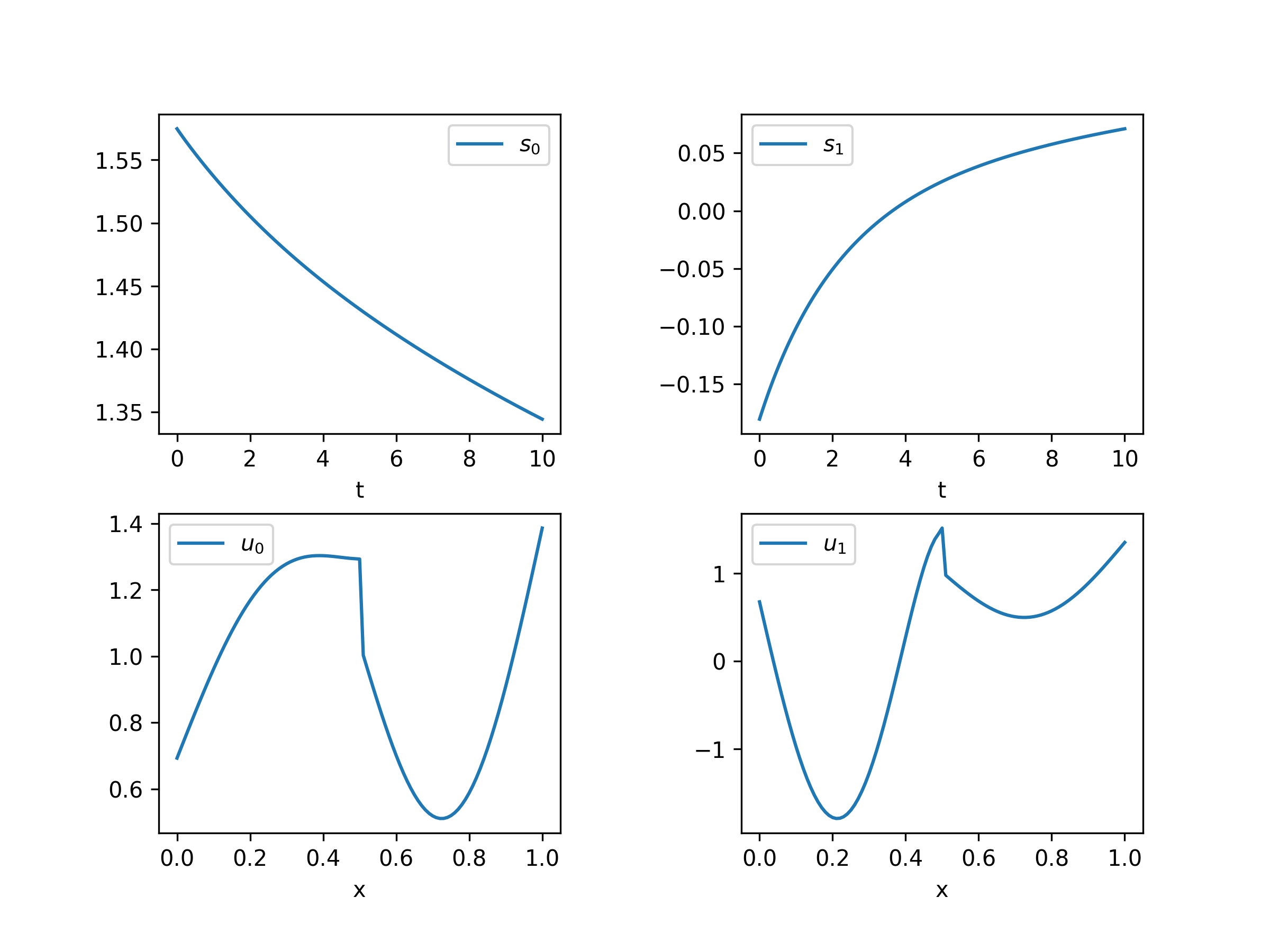}
\caption{The first two spatial and temporal modes of the solution $u(x,t)$ to Eq.~\eqref{eq:1D_diffusion} with $\beta(x)$ given by Eq.~\eqref{eq:discont_beta}. As expected, since $\beta(x)$ is spatially discontinuous, we see a discontinuity only in the spatial modes.}
\label{fig:POD_modes_discont_beta}
\end{figure}

We apply the weak-SINDy method and generate two surrogate models for the first two exact temporal modes (shown in Figure~\ref{fig:POD_modes_discont_beta}). 
The two surrogate models are of max degree one $(J=1)$ and three $(J=2)$, respectively. 
Figure~\ref{fig:discont_beta_originalsoln_pod_approx_surrogates} shows the original solution $u$ with discontinuous $\beta$ in Eq.~\eqref{eq:discont_beta}, the POD reconstruction of $u$, and the surrogate solutions $u^\dagger$ from surrogate models with $J=1$ and $J=2$.

\begin{figure}[h]
\centering
\subfloat[Original solution $u$]{%
\centering\label{fig:original_soln_discont_beta}\includegraphics[width=0.2\textwidth]{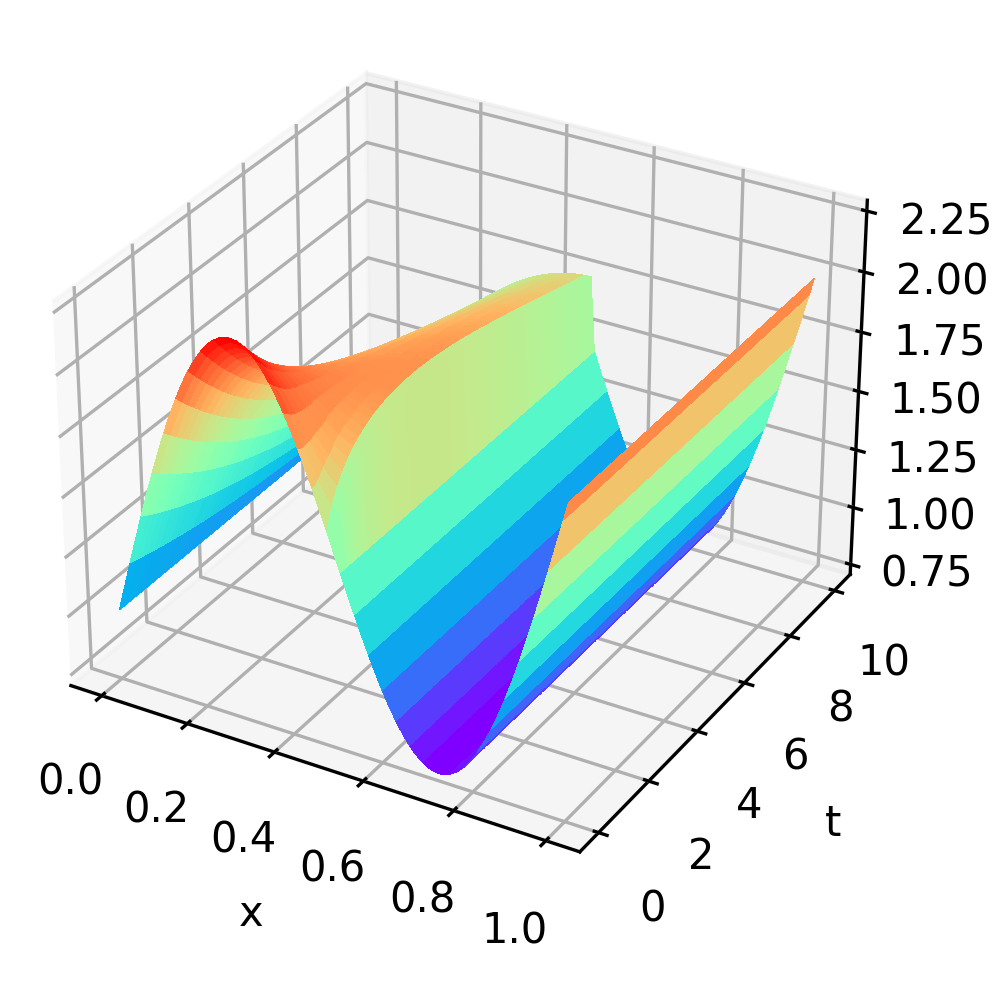}}
\qquad
\subfloat[POD reconstruction of $u$]{%
\centering\label{fig:PODapprox_discont_beta}\includegraphics[width=0.2\textwidth]{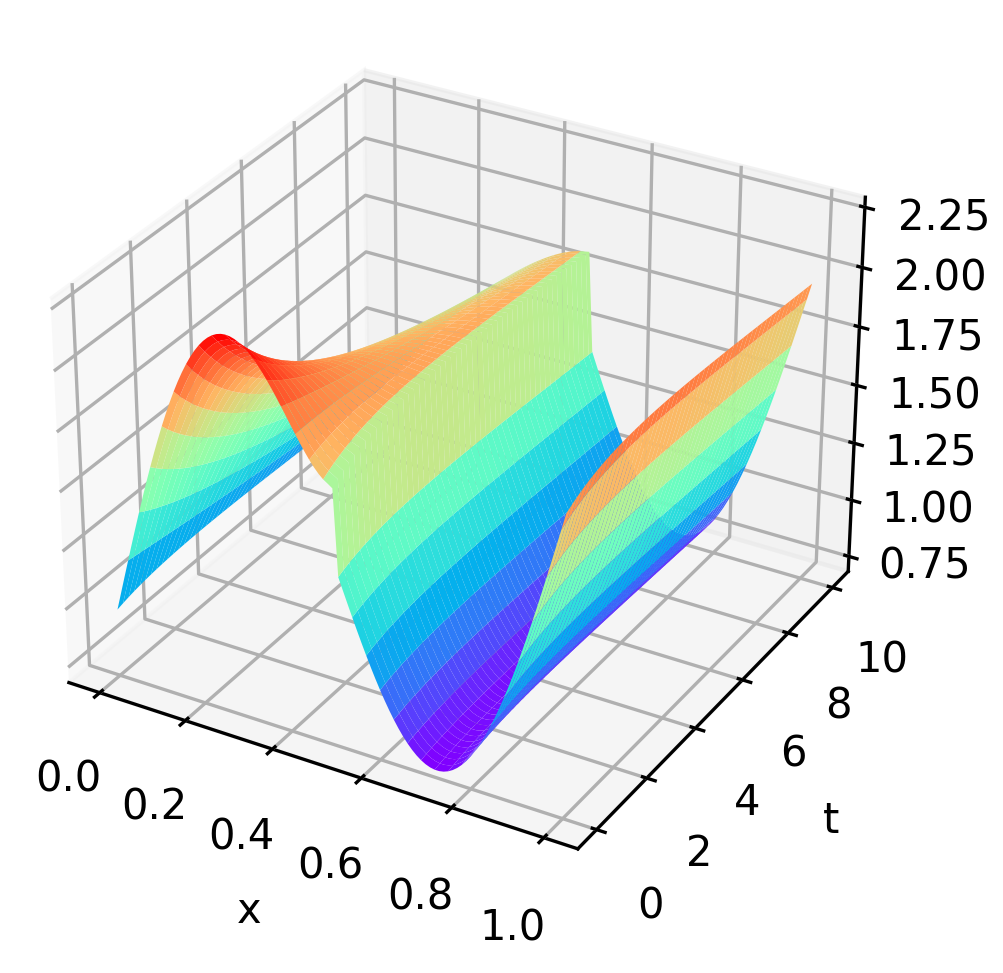}}
\qquad
\subfloat[Surrogate solution $u^\dagger$ with $J=1$]{%
\centering\label{fig:deg1_surrogate_discont_beta}\includegraphics[width=0.2\textwidth]{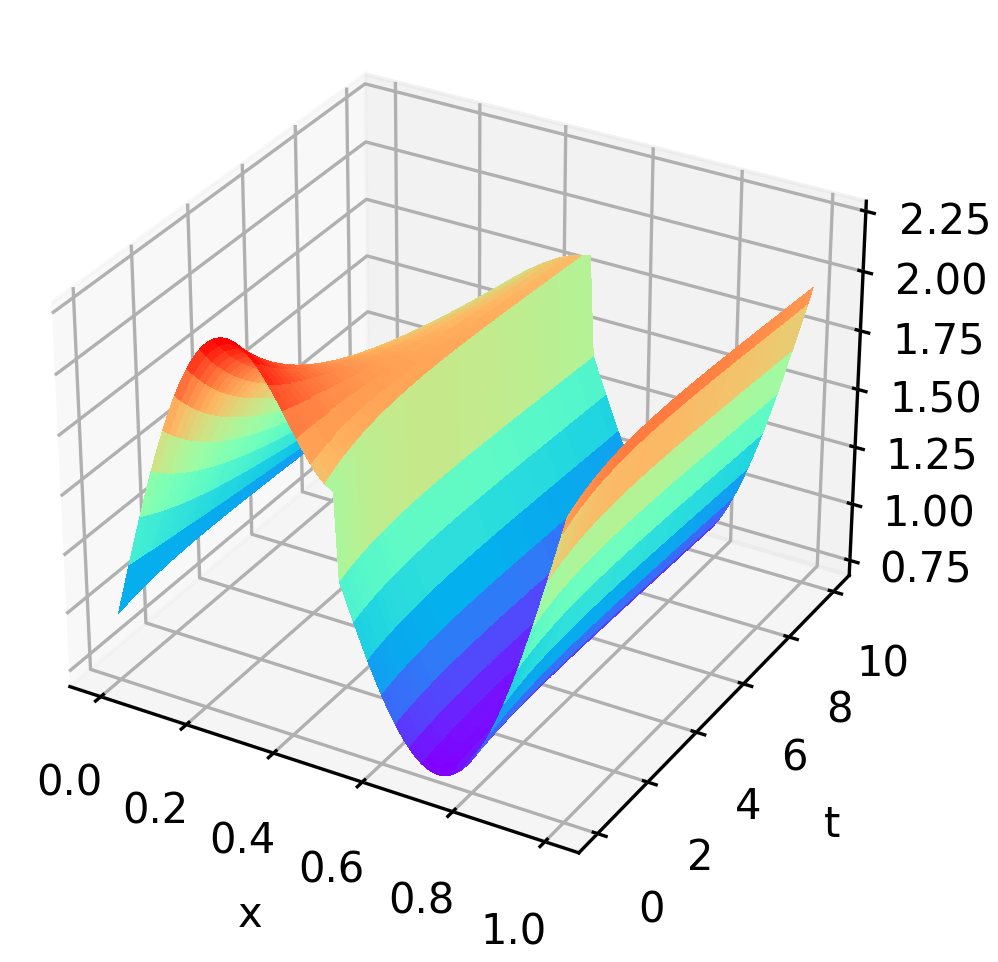}}
\qquad
\subfloat[Surrogate solution $u^\dagger$ with $J=2$]{%
\centering\label{fig:deg2_surrogate_discont_beta}\includegraphics[width=0.2\textwidth]{numerical_experiments/discont_beta_1deg_surrogate.png}}

\caption{The original solution $u(x,t)$ to Eq.~\eqref{eq:1D_diffusion} with discontinuous $\beta$ in Eq.~\eqref{eq:discont_beta}, the POD reconstruction of $u$, and POD approximations $u^\dagger$ from surrogate temporal modes $s_i^\dagger$ with max degree $J=1$ and $J=2$. }
\label{fig:discont_beta_originalsoln_pod_approx_surrogates}
\end{figure}

In Figure~\ref{fig:error_discont_beta_originalsoln_pod_approx_surrogates}, we plot the error between $u$ and its POD reconstruction (Figure~\ref{fig:error_PODapprox_discont_beta}), the error between the POD reconstruction of $u$ and $u^\dagger$ with $J=1$ (Figure~\ref{fig:error_deg1_surrogate_discont_beta}), and the error between the POD reconstruction of $u$ and $u^\dagger$ with $J=2$ (Figure~\ref{fig:error_deg2_surrogate_discont_beta}).
It can be observed from Figure~\ref{fig:error_PODapprox_discont_beta} that the POD reconstruction is not nearly accurate as in the case for constant $\beta$, due to the discontinuity of the solution in space.
However, the results in Figures~\ref{fig:error_deg1_surrogate_discont_beta} and \ref{fig:error_deg2_surrogate_discont_beta} indicate that the difference between the POD reconstruction and the surrogate solution $u^\dagger$ is still fairly small, especially when the max degree increases from $J=1$ to $J=2$, as expected from the error estimate in Theorem~\ref{multivar convergence theorem}.

\begin{figure}
\centering
\subfloat[Error between $u$ and its POD reconstruction]{%
\centering\label{fig:error_PODapprox_discont_beta}\includegraphics[width=0.27\textwidth]{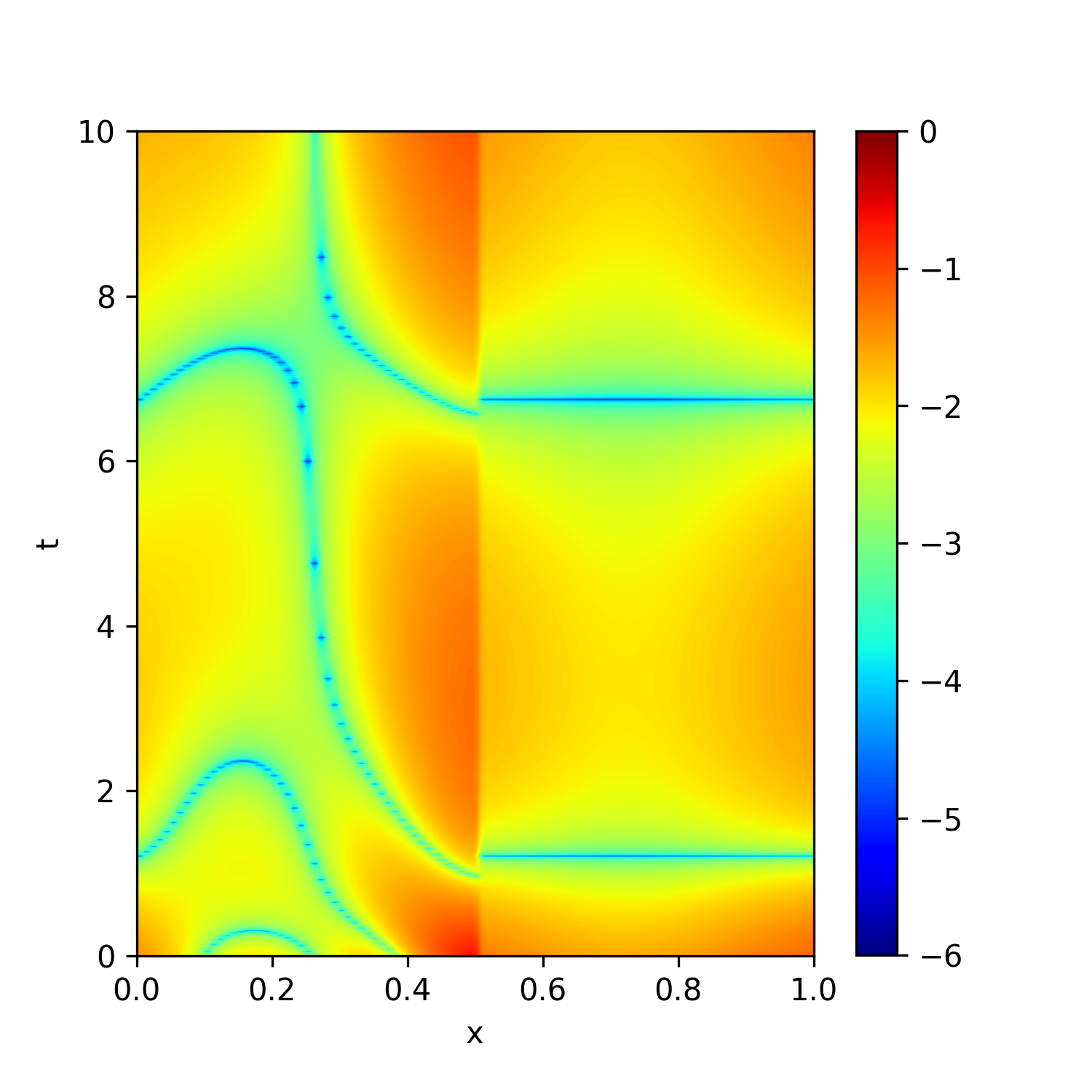}}
\qquad
\subfloat[Error between the POD reconstruction and $u^\dagger$ with $J=1$]{%
\centering\label{fig:error_deg1_surrogate_discont_beta}\includegraphics[width=0.27\textwidth]{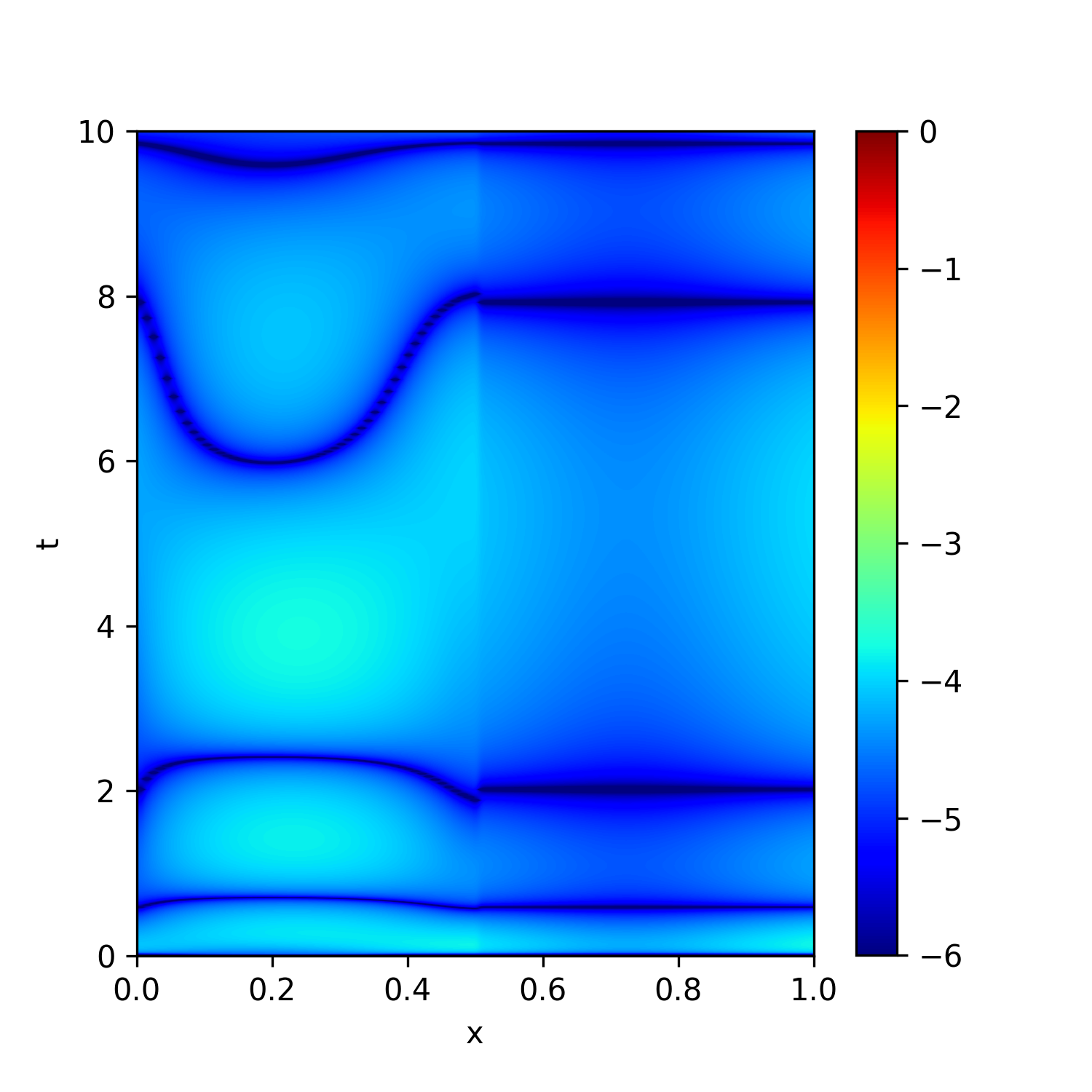}}
\qquad
\subfloat[Error between the POD reconstruction and $u^\dagger$ with $J=2$]{%
\centering\label{fig:error_deg2_surrogate_discont_beta}\includegraphics[width=0.27\textwidth]{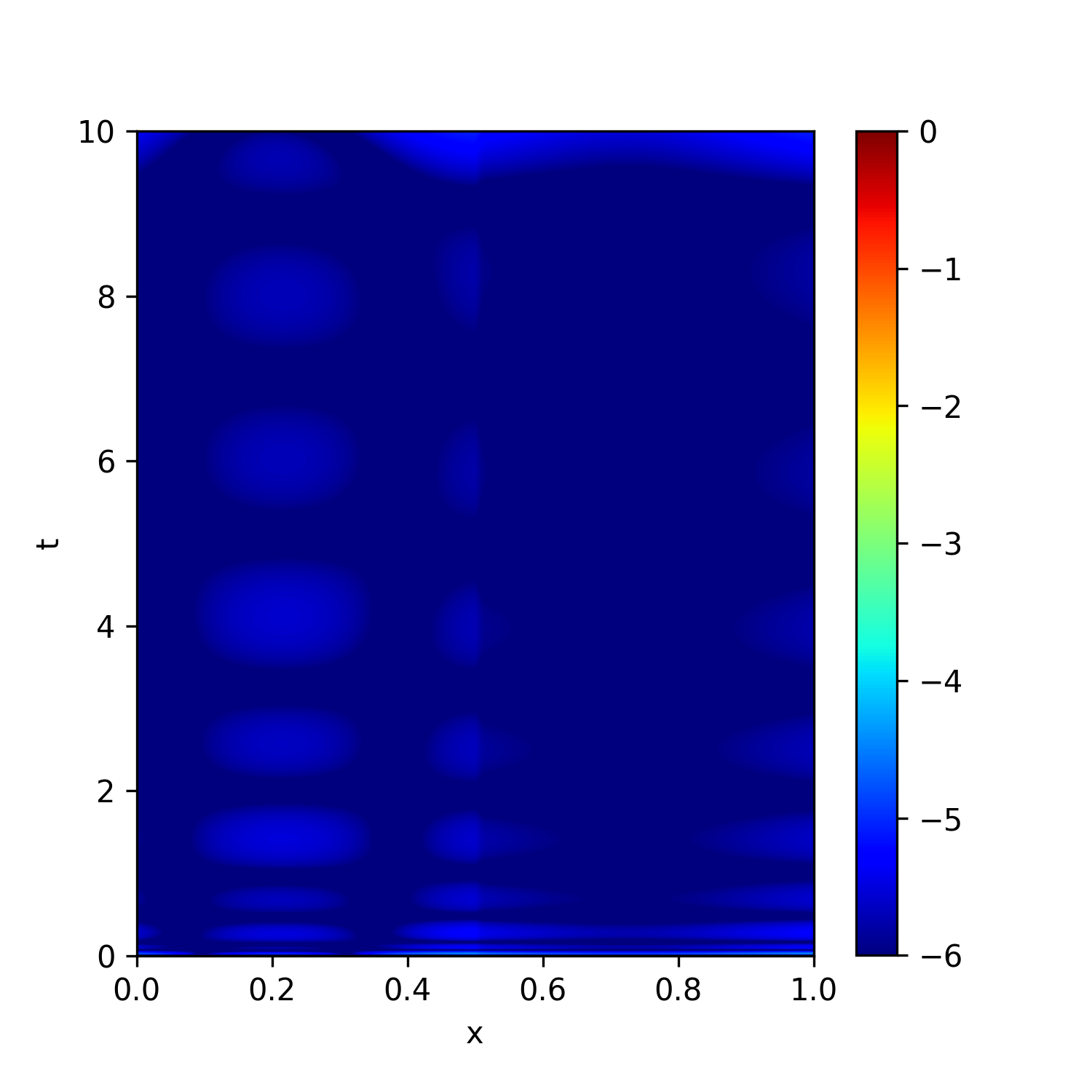}}

\caption{The error between $u$ and its exact POD reconstruction (Figure~\ref{fig:error_PODapprox_discont_beta}) for discontinuous $\beta$ and the error between the exact POD reconstruction and the approximate solution $u^\dagger$ from surrogate models with $J=1$ and $J=2$ (Figures~\ref{fig:error_deg1_surrogate_discont_beta} and \ref{fig:error_deg2_surrogate_discont_beta}) at each $x$ and $t$ in a logarithmic scale. }
\label{fig:error_discont_beta_originalsoln_pod_approx_surrogates}
\end{figure}

\section{Conclusions}
In this paper, error estimates for the surrogate models generated by the weak-SINDy technique with polynomial projection basis are provided. 
It was proved that (i) the dynamics of surrogate models generated by the weak-SINDy technique converges to the underlying dynamics as the basis degree increases and (ii) the surrogate solutions are reasonably close to the original solutions. 
The error estimates also provide a convergence rate of the surrogate dynamics that depends on the regularity of the dynamics and the resulting solutions. 
The analysis relies on the assumption that weak-SINDy procedure leads to a linear system with a unique solution, implying that the number of test functions $K$ needs to be greater than the number of projection basis $J$, which is verified in the numerical results.
Numerical experiments of applying the weak-SINDy technique to generate surrogate models for POD approximations to PDEs were performed.
The reported results confirm that, when the exact POD modes are unavailable (as in most practical scenarios), the error introduces by POD approximation often outweigh the surrogate modeling error, justifying the use of weak-SINDy surrogate models for POD approximation.

\section{Acknowledgements}
The authors would like to sincerely thank Konstantin Pieper for many illuminating conversations and the referees for a careful reading of the manuscript, which has greatly improved the quality. 

\appendix
\section{Proof of Proposition \ref{prop:pjk_converges_to_qj}}
\label{sec:appendix}
In this section we give the full details to show that the polynomial $p^*_{J,K}$ converges to the polynomial $q_J$. As the proof will rely on the interplay of the matrix and functional form of the minimization problems, we will define the following maps for convenience. 

\begin{definition} Let $x:[a,b]\rightarrow X$, $f:X\rightarrow \mathbb{R}$, $\{\psi_k\}_{k=0}^K$ be a finite subcollection of a polynomial orthonormal basis for $L^2([a,b])$ and $\projK$ be projection onto $\operatorname{span}\{\psi_k\}_{k=0}^K$.  Given a polynomial $p\in \mathbb{P}_J(X)$ we define the maps \[\chi:\mathbb{R}^{J+1}\rightarrow \mathbb{P}_J(X),\quad \quad [a_0, \ldots ,  a_J]^\top \in \mathbb{R}^{J+1}\mapsto \mathbb{P}_J(X)\ni a_0+a_1x + \ldots + a_{J}x^J,\]
\[\rho_{f,K}:\mathbb{P}_{J}(X)\rightarrow \mathbb{R}, \quad \quad \mathbb{P}_J(X)\ni p \mapsto \|\projK(f\circ x - p \circ x)\|^2_{L^2([a,b])},\]
\[\rho_{f,\infty}:\mathbb{P}_{J}(X)\rightarrow \mathbb{R}, \quad \quad \mathbb{P}_J(X)\ni p \mapsto \|f\circ x - p \circ x\|^2_{L^2([a,b])},\]
and 
\[M_{f,K}:\mathbb{R}^{J+1}\rightarrow \mathbb{R}, \quad \quad M_{f,K}:= \rho_{f,K} \circ \chi\]

\[M_{f,\infty}:\mathbb{R}^{J+1}\rightarrow \mathbb{R}, \quad \quad M_{f,\infty}:= \rho_{f,\infty} \circ \chi.\]

\end{definition}
Note that the coefficients of $p_{J,K}^*$, as a vector, is a minimizer of $M_{f,K}$ and the coefficients of $q_J$, as a vector, is a minimizer for $M_{f,\infty}$.

\begin{lemma}\label{lem:coefficientbound} If $p_{J,K}^*$ is unique for all $K$ and a fixed $J\in \mathbb{N}$, the sequence of vectors $\chi^{-1}(p^*_{J,K})$ is bounded in $\mathbb{R}^{J+1}$.
\end{lemma}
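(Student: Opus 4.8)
The plan is to argue by contradiction via a rescaling-and-compactness argument. Write $\bm{w}_K := \chi^{-1}(p_{J,K}^*)\in\mathbb{R}^{J+1}$ for the (unique) minimizer of $M_{f,K}$, and abbreviate $g := f\circ x$ and $h_j := x^j$ for $j=0,\dots,J$, both regarded as elements of $L^2([a,b])$ (this is part of the standing assumptions, and for $h_j$ follows from boundedness of $C_x$). Since $\chi(\bm{w})\circ x = \sum_{j=0}^J w_j h_j$ and $\projK$ is linear, the map $M_{f,K}$ can be written explicitly as
\[
M_{f,K}(\bm{w}) = \Big\|\projK g - \sum_{j=0}^J w_j\, \projK h_j\Big\|_{L^2([a,b])}^2 .
\]

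First I would extract linear independence from the uniqueness hypothesis: if $\sum_{j=0}^J c_j h_j = 0$ in $L^2([a,b])$ for some $\bm{c}\neq\bm{0}$, then $\sum_j c_j \projK h_j = \projK\big(\sum_j c_j h_j\big)=0$ for every $K$, so $\bm{w}_K + t\bm{c}$ would minimize $M_{f,K}$ for all $t\in\mathbb{R}$, contradicting uniqueness of $p_{J,K}^*$. Hence $\{1,x,\dots,x^J\}$ is linearly independent in $L^2([a,b])$; in particular the linear map $\bm{w}\mapsto\sum_j w_j h_j$ is injective on $\mathbb{R}^{J+1}$. (This also tacitly rules out degenerate cases such as $x$ constant, for which $p_{J,K}^*$ would not be unique once $J\geq 1$.)

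Now suppose, for contradiction, that $\{\bm{w}_K\}_K$ is unbounded, so there is a subsequence with $r_n := \|\bm{w}_{K_n}\|\to\infty$. By compactness of the unit sphere of $\mathbb{R}^{J+1}$, after passing to a further subsequence we may assume $\bm{u}_n := \bm{w}_{K_n}/r_n \to \bm{u}$ with $\|\bm{u}\| = 1$. Since $\bm{w}_{K_n}$ minimizes $M_{f,K_n}$ and $\projK[K_n]$ is an orthogonal projection, $M_{f,K_n}(\bm{w}_{K_n})\leq M_{f,K_n}(\bm{0}) = \|\projK[K_n] g\|_{L^2}^2 \leq \|g\|_{L^2}^2$. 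Dividing the displayed formula for $M_{f,K_n}(\bm{w}_{K_n})$ by $r_n^2$ gives
\[
\Big\| \tfrac{1}{r_n}\projK[K_n] g - \sum_{j=0}^J (\bm{u}_n)_j\, \projK[K_n] h_j \Big\|_{L^2([a,b])}^2 \leq \frac{\|g\|_{L^2([a,b])}^2}{r_n^2} \longrightarrow 0 .
\]
Passing to the limit: the first term tends to $0$ in $L^2$ since $\|\projK[K_n] g\|_{L^2}\leq\|g\|_{L^2}$ and $r_n\to\infty$; and because $\{\psi_k\}$ is a complete polynomial orthonormal basis for $L^2([a,b])$, $\projK[K_n] h_j\to h_j$ in $L^2$ for each $j$, so with $(\bm{u}_n)_j\to\bm{u}_j$ we get $\sum_j (\bm{u}_n)_j \projK[K_n] h_j \to \sum_j \bm{u}_j h_j$ in $L^2$. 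Therefore $\big\|\sum_{j=0}^J \bm{u}_j h_j\big\|_{L^2([a,b])} = 0$ with $\bm{u}\neq\bm{0}$, contradicting the linear independence of $\{1,x,\dots,x^J\}$. Hence $\{\chi^{-1}(p_{J,K}^*)\}_K$ is bounded.

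The step I expect to be the main obstacle is the first one — correctly identifying that uniqueness of $p_{J,K}^*$ for \emph{all} $K$ is exactly what forces $\{1,x,\dots,x^J\}$ to be linearly independent in $L^2([a,b])$ (and thereby excludes the degenerate $x$); once that is in hand, the remainder is a routine rescaling argument that only needs the $L^2$-convergence $\projK[K] h_j\to h_j$ guaranteed by completeness of the polynomial orthonormal basis.
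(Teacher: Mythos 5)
Your proof is correct, but it takes a genuinely different route from the paper's. The paper works directly with the normal equations: writing $\bm{G}^K$ for the matrix with entries $\ip{\varphi_j\circ x}{\psi_k}$, it notes that once $\bm{G}^{K_0}$ has full column rank (the uniqueness hypothesis), the quadratic forms $\bm{w}^\top(\bm{G}^K)^\top\bm{G}^K\bm{w}$ are nondecreasing in $K$, so the smallest eigenvalue of $(\bm{G}^K)^\top\bm{G}^K$ stays bounded away from zero for $K\geq K_0$; combined with the uniform bounds $\|\bm{b}^K\|_2\leq\|f\circ x\|_{L^2([a,b])}$ and $\|(\bm{G}^K)^\top\|_2\leq\bigl(\sum_j\|\varphi_j\circ x\|^2_{L^2([a,b])}\bigr)^{1/2}$ (both consequences of orthonormality of the $\psi_k$, i.e.\ Bessel's inequality), this gives the explicit estimate $\|\bm{w}^K\|_2\leq \|(\bm{G}^K)^\top\|_2\,\|\bm{b}^K\|_2/\lambda_{\min}((\bm{G}^{K})^\top\bm{G}^{K})$. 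Your argument is instead a soft rescaling-and-compactness argument: you use only the one-sided comparison $M_{f,K}(\bm{w}_K)\leq M_{f,K}(\bm{0})\leq\|f\circ x\|^2_{L^2([a,b])}$, compactness of the unit sphere in $\mathbb{R}^{J+1}$, completeness of the test basis (so $\mathcal{P}_{K}(x^j)\to x^j$ in $L^2$), and the linear independence of $\{1,x,\dots,x^J\}$ in $L^2([a,b])$, which you correctly extract from the uniqueness hypothesis. The two approaches rest on the same underlying fact --- uniqueness is equivalent to eventual full column rank of $\bm{G}^K$, which is equivalent to linear independence of the $\varphi_j\circ x$ in $L^2$ --- but the paper exploits it through an eigenvalue estimate while you exploit it through a limit argument. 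The paper's version buys an explicit, computable bound on $\|\bm{w}^K\|_2$; yours is purely qualitative but arguably more robust, since it never forms normal equations and would survive, for example, replacing monomials by a general projection basis or adding a convex regularizer. One cosmetic caveat: for $K<J$ the least-squares problem cannot have a unique minimizer, so the hypothesis ``unique for all $K$'' should really be read as ``unique for all $K\geq K_0$''; the paper's proof makes this explicit, and your subsequence argument (with $K_n\to\infty$) is insensitive to it.
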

\begin{proof}
Let the $(k,j)$-th entry of $\bm{G}^{K}\in\mathbb{R}^{(K+1)\times (J+1)}$ be $\ip{\varphi_j\circ x}{\psi_k}$.
Let $\bm{g}^{K+1}$ be the $(K+1)$-th row of the matrix $\bm{G}^{K+1}$, i.e. 
 \[\bm{G}^{K+1} = \left[ \begin{array}{c}\bm{G}^K \\ \bm{g}^{K+1}\end{array} \right].\]
Assume for some $K_0$ that $\bm{G}^{K_0}$ is of full rank (i.e. a unique minimizer is assumed). 
Now, 
\begin{align*}
    \bm{w}^\top(\bm{G}^{K_0+1})^\top(\bm{G}^{K_0+1})\bm{w} &= \bm{w}^\top(\bm{G}^{K_0})^\top(\bm{G}^{K_0})\bm{w}+ \bm{w}^\top(\bm{g}^{K_0+1})^\top(\bm{g}^{K_0+1})\bm{w} \\
    &\geq\bm{w}^\top(\bm{G}^{K_0})^\top(\bm{G}^{K_0})\bm{w}>0
\end{align*}
by the full rank assumption. Therefore, $(\bm{G}^{K})^\top(\bm{G}^{K})$ is positive definite for all $K\geq K_0$ and the eigenvalues are bounded away from zero for all $K\geq K_0$. For all $K\geq K_0$, let $\bm{w}^K\in \mathbb{R}^{J+1}$ be such that $(\bm{G}^K)^\top(\bm{G}^K)\bm{w}^K = (\bm{G}^K)^\top \bm{b}^K$ where $\bm{b}^K=[\ip{f\circ x}{\psi_0}, \ldots, \ip{f\circ x}{\psi_K}]^\top.$ Note, $\bm{w}^K = \chi^{-1}(p_{J,K}^*)$. 
\[
    \|(\bm{G}^K)^\top\bm{G}^K \bm{w}^K\|_2 = \|(\bm{G}^K)^\top \bm{b}^k\|_2
\]
Therefore, 
\[\|\bm{w}^K\|_2 \leq \frac{\|(\bm{G}^{K})^\top\|_2 \|\bm{b}^K\|_2}{\lambda_{\text{min}}((\bm{G}^K)^\top\bm{G}^K)}. \]
Here $\|\cdot \|_2$ refers to the vector $2$-norm and the induced matrix norm and $\lambda_{\text{min}}(\cdot)$ refers to the minimum eigenvalue. From the definitions of $\bm{G}^{K}$ and $\bm{b}^{K}$, it is straightforward to see that $\|(\bm{G}^{K})^\top\|_2 \|\bm{b}^K\|_2$ is bounded for all $K$ by 
$ \sqrt{\sum_{j=1}^J \|\varphi_j \circ x\|^2_{L^2([a,b])}} \cdot \|f\circ x\|_{L^2([a,b])}$ by orthogonality of the test functions $\{\psi_k\}$, which proves the claim.
\end{proof}

\begin{lemma}\label{lem:uniformconvgence_K} If $f \in H^{m}(X)$, $x\in C^s([a,b])$, and $C_x:H^s(X)\rightarrow H^s([a,b])$ is bounded, then $M_{f,K}$ converges to $M_{f,\infty}$ uniformly as $K\rightarrow\infty$ over compact subsets of $\mathbb{R}^{J+1}$. 
\end{lemma}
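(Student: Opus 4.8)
The plan is to reduce the claim to a pointwise estimate at each coefficient vector and then control the relevant Sobolev norm uniformly over a compact set. Fix $\bm p=[p_0,\dots,p_J]^\top\in\mathbb R^{J+1}$, write $p=\chi(\bm p)$, and set $h_{\bm p}:=f\circ x-p\circ x$. Since $\projK$ is the orthogonal $L^2([a,b])$ projection onto $\mathbb P_K([a,b])$, the Pythagorean identity $\|h_{\bm p}\|_{L^2}^2=\|\projK h_{\bm p}\|_{L^2}^2+\|h_{\bm p}-\projK h_{\bm p}\|_{L^2}^2$ gives
\[
|M_{f,K}(\bm p)-M_{f,\infty}(\bm p)|=\bigl|\rho_{f,K}(p)-\rho_{f,\infty}(p)\bigr|=\|h_{\bm p}-\projK h_{\bm p}\|_{L^2([a,b])}^2 .
\]
First I would note that, with $\sigma:=\min(\ell,s)>0$, the hypotheses $f\circ x\in H^\ell([a,b])$ and $p\circ x\in H^s([a,b])$ imply $h_{\bm p}\in H^\sigma([a,b])$, so the single-variable case of Lemma~\ref{lem:polyapproxindeg} applied on $\Omega=[a,b]$ (with $K$ in place of $J$) yields $\|h_{\bm p}-\projK h_{\bm p}\|_{L^2([a,b])}\le C\,K^{-\sigma}\,\|h_{\bm p}\|_{H^\sigma([a,b])}$, where $C$ is independent of $K$ and of $\bm p$.

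Next I would bound $\|h_{\bm p}\|_{H^\sigma([a,b])}$ uniformly over a compact set $\mathcal K\subset\mathbb R^{J+1}$. Writing $p\circ x=\sum_{j=0}^J p_j\,x^j$ and using that each $x^j\in H^s([a,b])\subseteq H^\sigma([a,b])$ by the standing hypothesis that $q\circ x\in H^s([a,b])$ for every polynomial $q$, the triangle inequality gives
\[
\|p\circ x\|_{H^\sigma([a,b])}\le\sum_{j=0}^J|p_j|\,\|x^j\|_{H^\sigma([a,b])}\le\Bigl(\max_{0\le j\le J}\|x^j\|_{H^\sigma([a,b])}\Bigr)\sum_{j=0}^J|p_j|,
\]
which is bounded by a constant $C_{\mathcal K}$ for all $\bm p\in\mathcal K$ since $\mathcal K$ is bounded. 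Hence $\sup_{\bm p\in\mathcal K}\|h_{\bm p}\|_{H^\sigma([a,b])}\le\|f\circ x\|_{H^\sigma([a,b])}+C_{\mathcal K}=:\widetilde C<\infty$.

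Combining the two steps, $\sup_{\bm p\in\mathcal K}|M_{f,K}(\bm p)-M_{f,\infty}(\bm p)|\le C^2\widetilde C^2\,K^{-2\sigma}\to0$ as $K\to\infty$, which is precisely uniform convergence on $\mathcal K$. I do not expect a real obstacle; the only points that need care are (i) that $\{\psi_k\}_{k=0}^K$ spans all of $\mathbb P_K([a,b])$, so $\projK$ is exactly the $L^2$-projection onto $\mathbb P_K([a,b])$ to which Lemma~\ref{lem:polyapproxindeg} applies verbatim, and (ii) that $\sigma=\min(\ell,s)>0$, so that $K^{-2\sigma}$ is a genuinely decaying rate — both of which are immediate from the hypotheses.
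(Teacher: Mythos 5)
Your proof is correct and follows essentially the same route as the paper: the Pythagorean identity reduces the difference to $\|h_{\bm p}-\projK h_{\bm p}\|_{L^2}^2$, Lemma~\ref{lem:polyapproxindeg} with exponent $\min(\ell,s)$ gives the $K^{-2\sigma}$ rate, and the $H^\sigma$ norm is bounded uniformly over the compact set of coefficient vectors. Your explicit expansion $\|p\circ x\|_{H^\sigma}\le\bigl(\max_j\|x^j\|_{H^\sigma}\bigr)\sum_j|p_j|$ merely makes precise the paper's terser remark that this norm is bounded on compact subsets.
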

\begin{proof}
By Lemma~\ref{lem:polyapproxindeg}
\begin{align*}
|M_{f,K}(\vec{p})-M_{f,\infty}(\vec{p})| &= \left|\|\projK(f\circ x - p)\|^2_{L^2([a,b])}- \|(f\circ x - p)\|^2_{L^2([a,b])}\right|\\
& = \|(f\circ x - p\circ x) - \projK(f\circ x - p\circ x) \|^2_{L^2([a,b])}\\
&\leq \frac{\|f\circ x - p\circ x\|^2_{H^{{s}}([a,b])}\cdot C}{K^{2s}} 
\end{align*}
for some constant $C$  independent of $f\circ x-p\circ x$ and $K$. In the above, $p=\chi(\vec{p})$. Note that the regularity of $f\circ x - p\circ x$ is the same for all polynomials our assumption. As our functions act on a compact subset, this assures that $\|f\circ x - p\circ x\|^2_{H^{s}([a,b])}\cdot C$ is bounded. Therefore, $|M_{f,K}(\vec{p})-M_{f,\infty}(\vec{p})|$ can be made arbitrarily small for any $\vec{p}$ by choosing $K$ large enough. 
\end{proof}
\begin{proposition}
Under the assumption that the maps $M_{f,K}$ and $M_{f,\infty}$ have unique minimizers, the maps $M_{f,K}$ and $M_{f,\infty}$ are strongly convex. 
\end{proposition}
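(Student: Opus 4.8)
The plan is to recognize that both $M_{f,K}$ and $M_{f,\infty}$ are \emph{quadratic} functions of the coefficient vector $\vec{p}\in\mathbb{R}^{J+1}$, each therefore with a constant Hessian; then to invoke the elementary fact that a convex quadratic which attains its minimum has a unique minimizer if and only if its Hessian is positive definite, which in turn is equivalent to strong convexity.

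First I would write out the quadratic structure explicitly. Since $\chi$ is linear and $\projK$ is a bounded linear operator on $L^2([a,b])$, the map $\vec{p}\mapsto \projK\!\left(f\circ x - \chi(\vec{p})\circ x\right)$ is affine from $\mathbb{R}^{J+1}$ into $L^2([a,b])$; squaring its $L^2$ norm and using the orthonormality of $\{\psi_k\}_{k=0}^K$ exactly as in the proof of Lemma~\ref{lem:coefficientbound} gives $M_{f,K}(\vec{p}) = \|\bm{G}^K\vec{p}-\bm{b}^K\|_2^2 = \vec{p}^\top(\bm{G}^K)^\top\bm{G}^K\vec{p} - 2(\bm{b}^K)^\top\bm{G}^K\vec{p} + \|\bm{b}^K\|_2^2$, a quadratic with constant Hessian $H_K := 2(\bm{G}^K)^\top\bm{G}^K \succeq 0$. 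Expanding $M_{f,\infty}(\vec{p}) = \|f\circ x - \sum_{j=0}^J p_j x^j\|_{L^2([a,b])}^2$ similarly exhibits it as a quadratic with constant Hessian $H_\infty := 2Q$, where $Q$ is the Gram matrix of the powers $\{x^j\}_{j=0}^J$ in $L^2([a,b])$, again positive semidefinite. In particular each map is convex and attains its infimum (a least-squares objective, respectively the squared distance from $f\circ x$ to a finite-dimensional subspace of $L^2$, always has a minimizer).

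Next I would use the uniqueness hypothesis to upgrade positive semidefiniteness to positive definiteness. For any convex quadratic $M(\vec{p}) = \vec{p}^\top H\vec{p} - 2\vec{c}^\top\vec{p} + d$ with $H\succeq 0$ that attains its minimum at some $\vec{p}^*$, the first-order condition gives $H\vec{p}^* = \vec{c}$; if $H$ were singular, picking $0\neq v\in\ker H$ and using $Hv=0$ together with $\vec{c}^\top v = (\vec{p}^*)^\top Hv = 0$ shows $M(\vec{p}^* + tv) = M(\vec{p}^*)$ for every $t\in\mathbb{R}$, contradicting uniqueness of the minimizer. Hence the hypothesis forces $H_K\succ 0$ and $H_\infty\succ 0$. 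Finally, for such a quadratic $\nabla^2 M \equiv 2H \succeq 2\lambda_{\min}(H)\,I$ with $\lambda_{\min}(H)>0$, which is precisely $\mu$-strong convexity with $\mu = 2\lambda_{\min}(H)$; applying this to $M_{f,K}$ with $H = (\bm{G}^K)^\top\bm{G}^K$ and to $M_{f,\infty}$ with $H = Q$ completes the proof.

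The content here is essentially bookkeeping rather than a genuine difficulty: the only points requiring care are confirming that each functional truly attains its minimum (so that the failure of positive-definiteness yields an entire \emph{line} of minimizers rather than the empty minimizer set) and that the Hessian is genuinely constant — both of which are immediate once the quadratic form is displayed. Everything else reuses the matrix identities from the proof of Lemma~\ref{lem:coefficientbound} together with standard finite-dimensional convex analysis.
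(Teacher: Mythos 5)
Your proof is correct, and for $M_{f,K}$ it is essentially the paper's argument: both identify the constant Hessian $2(\bm{G}^K)^\top\bm{G}^K$ from the matrix form $M_{f,K}(\vec{p})=\|\bm{G}^K\vec{p}-\bm{b}^K\|_2^2$ and use uniqueness of the minimizer to force positive definiteness (the paper via the full-rank statement in the proof of Lemma~\ref{lem:coefficientbound}, you via the cleaner observation that a kernel direction of the Hessian would produce an entire line of minimizers). Where you genuinely diverge is $M_{f,\infty}$: the paper never touches its Hessian, but instead writes the strong-convexity inequality for each $M_{f,K}$ with modulus $m_k=\lambda_{\min}((\bm{G}^K)^\top\bm{G}^K)$, notes that these moduli are increasing and bounded away from zero, and passes to the limit $K\to\infty$ using $\|\projK g\|_{L^2}^2\to\|g\|_{L^2}^2$; you instead exhibit $M_{f,\infty}$ directly as a quadratic whose Hessian is the Gram matrix $Q$ of $\{x^j\}_{j=0}^J$ in $L^2([a,b])$ and apply the same uniqueness-implies-definiteness argument to it. Your route has the advantage of actually invoking the stated hypothesis that $M_{f,\infty}$ has a unique minimizer (the paper's limit argument only ever uses uniqueness for the finite-$K$ problems, and its modulus for $M_{f,\infty}$ is inherited rather than intrinsic), and it requires you to check, as you do, that each functional attains its minimum — trivial here since both are least-squares distances to finite-dimensional (hence closed) subspaces. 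The paper's route, in exchange, delivers a single modulus $M=\min_k m_k$ valid uniformly in $K$ and for the limit, which is a slightly stronger packaging of the same facts; by Parseval your $\lambda_{\min}(Q)$ is the limit of the paper's $m_k$, so the two moduli are consistent.
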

\begin{proof}
To prove that $M_{f,K}$ is a strongly convex map, we note that 
\begin{equation}\label{eq:matrixform}
    M_{f,K}(\vec{p})= \|\projK(f\circ x-p\circ x) \|^2_{L^2} = \|\bm{G}^{K}\vec{p}-\bm{b}^K\|_2^2\, \quad p= \chi^{-1}(\vec{p}).
\end{equation}
Here it is clear that $2(\bm{G}^{K})^\top(\bm{G}^K)$ is the Hessian for $M_{f,K}$. Assuming the uniqueness of minimizers, the proof of Lemma~\ref{lem:coefficientbound} shows that $m_k=\lambda_{\min}((\bm{G}^{K})^\top(\bm{G}^K))$ is an increasing sequence bounded away from zero. This is enough to show strong convexity for all $M_{f,K}$ by using the matrix form of $M_{f,K}$ shown in Equation \eqref{eq:matrixform}. With strong convexity of $M_{f,K}$ established for all $K$, the proof of strong convexity for $M_{f,\infty}$ is done by a limit argument.
\end{proof}
\begin{lemma}\label{lem:strongconvexitylemma}
If $g:B\subset\mathbb{R}^{J+1}\rightarrow \mathbb{R}$ is strongly convex then there exists a constant $C$ such that if $x^*=\argmin_{x\in B}(g)$ and $\varepsilon>0$ then $|g(y)-g(x^*)|\leq \varepsilon$ implies $\|y-x^*\|^2_2\leq C\varepsilon$.
\end{lemma}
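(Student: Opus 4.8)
The plan is to use the definition of strong convexity directly. Recall that $g$ being strongly convex with parameter $M>0$ means that for all $y$ in $B$ (assuming $B$ convex, or at least a neighborhood of $x^*$) we have the inequality
\[
g(y) \geq g(x^*) + \nabla g(x^*)^\top (y-x^*) + \tfrac{M}{2}\|y-x^*\|_2^2 .
\]
Since $x^* = \argmin_{x\in B} g$, the first-order optimality condition gives $\nabla g(x^*)^\top(y-x^*)\geq 0$ for all feasible $y$ (and equals zero when $x^*$ is interior). Dropping this nonnegative term yields $g(y) - g(x^*) \geq \tfrac{M}{2}\|y-x^*\|_2^2$. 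Combining with the hypothesis $|g(y)-g(x^*)|\leq \varepsilon$, and noting $g(y)-g(x^*)\geq 0$ so that $g(y)-g(x^*) = |g(y)-g(x^*)|\leq\varepsilon$, we obtain $\tfrac{M}{2}\|y-x^*\|_2^2 \leq \varepsilon$, i.e. $\|y-x^*\|_2^2 \leq (2/M)\varepsilon$. Setting $C = 2/M$ finishes the proof.

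First I would fix notation: let $M>0$ be the strong-convexity modulus of $g$, so that $g - \tfrac{M}{2}\|\cdot\|_2^2$ is convex. Then I would state the standard quadratic lower bound at the minimizer (either quoting it as the well-known characterization of strong convexity, or deriving it in one line from convexity of $g-\tfrac M2\|\cdot\|^2$ together with $\nabla g(x^*) = 0$ if $x^*$ is interior, or with the variational inequality $\nabla g(x^*)^\top(y-x^*)\ge 0$ if $x^*$ lies on the boundary of $B$). After that the argument is just the chain of inequalities above with $C := 2/M$.

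The only mild subtlety — and the step I would be most careful about — is whether the first-order term $\nabla g(x^*)^\top(y-x^*)$ can be discarded. If $B$ is convex and $x^*$ is a global minimizer over $B$, this term is nonnegative for every $y\in B$, so dropping it is valid; if $x^*$ is an interior minimizer the term simply vanishes. In the application in this paper, $g$ is one of the maps $M_{f,K}$ or $M_{f,\infty}$, which are quadratic with positive-definite Hessian on all of $\mathbb{R}^{J+1}$, so $x^*$ is the unique unconstrained minimizer and $\nabla g(x^*) = 0$ exactly; hence no obstruction arises. I would add one sentence noting this so the lemma is applied cleanly downstream. Beyond that point there is no real obstacle — the statement is essentially a restatement of the defining inequality of strong convexity, and the proof is three lines.
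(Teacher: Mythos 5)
Your proof is correct and follows essentially the same route as the paper's: both derive the quadratic growth bound $g(y)-g(x^*)\geq \frac{M}{2}\|y-x^*\|_2^2$ from the strong-convexity inequality at the minimizer and then set $C=2/M$ (the paper phrases the optimality step via ``$0$ is a subdifferential at $x^*$'' where you use the gradient/variational inequality, a cosmetic difference). Your extra remark about boundary versus interior minimizers is a small but welcome clarification that the paper omits.
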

\begin{proof}
If $g$ is a strongly convex function this is equivalent to 
\[g(y) \geq g(x) + \ip{s_x}{y-x} +\frac{\mu}{2} \|y-x\|^2\]
where $s_x$ is a sub-differential at $x$. As $g$ is strongly convex, it has a unique minimizer, call it $x^*$. At $x = x^*$
\[g(y) - g(x^*) \geq \ip{s_{x^*}}{y-x^*} + \frac{\mu}{2}\|y-x^*\|^2.\]
As $x^*$ is the minimum we have that $0$ is a sub differential. Hence, 
\[g(y) -g(x^*)  \geq \frac{\mu}{2}\|y-x^*\|^2\]

\end{proof}

\begin{proposition}\label{prop:coefficientconvergence} Under the assumption of unique minimizers of $M_{f,K}$ and $M_{f\,\infty}$, the coefficients of $p_{J,K}^*$ converge to the coefficients of $q_J$, i.e. \[\lim_{K\rightarrow \infty}\chi^{-1}(p^*_{J,K}) = \chi^{-1}(q_J).\] 
\end{proposition}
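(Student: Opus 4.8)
The plan is to combine the three ingredients already assembled: (1) the sequence of coefficient vectors $\chi^{-1}(p^*_{J,K})$ lies in a fixed bounded set of $\mathbb{R}^{J+1}$ (Lemma~\ref{lem:coefficientbound}); (2) $M_{f,K}\to M_{f,\infty}$ uniformly on compact subsets of $\mathbb{R}^{J+1}$ (Lemma~\ref{lem:uniformconvgence_K}); and (3) $M_{f,\infty}$ is strongly convex, so by Lemma~\ref{lem:strongconvexitylemma} closeness in objective value forces closeness to its unique minimizer $\chi^{-1}(q_J)$. I would argue by a standard $\argmin$-stability sandwich rather than extracting subsequences.

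First, fix $J$ and set $\bm{w}^K := \chi^{-1}(p^*_{J,K})$ and $\bm{w}^\infty := \chi^{-1}(q_J)$. By Lemma~\ref{lem:coefficientbound} there is a compact set $B\subset\mathbb{R}^{J+1}$, which I may take to be a closed ball containing $\bm{w}^\infty$ and every $\bm{w}^K$ for $K\ge K_0$. Let $\varepsilon_K := \sup_{\bm{w}\in B}|M_{f,K}(\bm{w}) - M_{f,\infty}(\bm{w})|$, which tends to $0$ by Lemma~\ref{lem:uniformconvgence_K}. Since $\bm{w}^K$ minimizes $M_{f,K}$ (over all of $\mathbb{R}^{J+1}$, hence in particular $M_{f,K}(\bm{w}^K)\le M_{f,K}(\bm{w}^\infty)$) and $\bm{w}^\infty$ minimizes $M_{f,\infty}$, the usual two-sided estimate gives
\begin{align*}
M_{f,\infty}(\bm{w}^K)
&\le M_{f,K}(\bm{w}^K) + \varepsilon_K
\le M_{f,K}(\bm{w}^\infty) + \varepsilon_K\\
&\le M_{f,\infty}(\bm{w}^\infty) + 2\varepsilon_K,
\end{align*}
so $0 \le M_{f,\infty}(\bm{w}^K) - M_{f,\infty}(\bm{w}^\infty) \le 2\varepsilon_K \to 0$.

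Second, apply Lemma~\ref{lem:strongconvexitylemma} to $g = M_{f,\infty}$ restricted to $B$ (which is strongly convex with the same modulus, and whose minimizer over $B$ is $\bm{w}^\infty$ since $\bm{w}^\infty$ is an interior minimizer of $M_{f,\infty}$): there is a constant $C$ with $\|\bm{w}^K - \bm{w}^\infty\|_2^2 \le C\,(2\varepsilon_K)$. Letting $K\to\infty$ yields $\bm{w}^K \to \bm{w}^\infty$, i.e. $\lim_{K\to\infty}\chi^{-1}(p^*_{J,K}) = \chi^{-1}(q_J)$.

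The main obstacle — and the point deserving care in the write-up — is the bookkeeping around the domains: Lemma~\ref{lem:strongconvexitylemma} is stated for a function on a set $B$ with $x^* = \argmin_{x\in B} g$, so I need to ensure that $\bm{w}^\infty$ is genuinely the minimizer of $M_{f,\infty}$ over the chosen $B$ (guaranteed by enlarging the ball from Lemma~\ref{lem:coefficientbound} if necessary, using that $M_{f,\infty}$ has a global minimizer which is its unique minimizer on any convex set containing it) and that $B$ indeed contains all $\bm{w}^K$ so that uniform convergence of $M_{f,K}$ there is available. One also wants $\bm{w}^K$ to satisfy $M_{f,K}(\bm{w}^K)\le M_{f,K}(\bm{w}^\infty)$, which is immediate since $\bm{w}^K$ is the global (hence $B$-constrained, as $\bm{w}^\infty\in B$) minimizer of $M_{f,K}$. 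Everything else is routine.
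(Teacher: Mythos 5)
Your proposal is correct and follows essentially the same route as the paper's proof: a compact ball containing all minimizers (from Lemma~\ref{lem:coefficientbound}), the standard two-sided $\argmin$-stability estimate using uniform convergence of $M_{f,K}$ to $M_{f,\infty}$ on that ball, and then Lemma~\ref{lem:strongconvexitylemma} to convert the objective-value gap into convergence of the coefficient vectors. Your single-chain inequality is just a tidier arrangement of the paper's two combined inequalities, and your remark about verifying that $\bm{w}^\infty$ minimizes $M_{f,\infty}$ over the chosen ball is a point the paper leaves implicit.
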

\begin{proof}
Let $B\subset \mathbb{R}^{J+1}$ be a compact ball that contains $\vec{p}_{J,K}=\chi^{-1}(p^*_{J,K})$ for all $K$ and $\vec{q}_J  = \chi^{-1}(q_J)$. Since $M_{f,K}$ converges uniformly to $M_{f,\infty}$ over compact sets, given an $\varepsilon>0$ there exists a $K_\varepsilon$ such that 
\[|M_{f,K}(\vec{p})-M_{f,\infty}(\vec{p})|<\varepsilon\]
for all $K\geq K_\varepsilon$ and $\vec{p}\in B$.
By minimality, for all $K\geq K_\varepsilon$ we have that 
\[M_{f,K}(\vec{p}_{J,K})\leq M_{f,K}(\vec{q}_J)\leq M_{f, \infty}(\vec{q}_J)+\varepsilon\]
and
\[M_{f,\infty}(\vec{q}_{J})\leq M_{f,\infty}(\vec{p}_{J,K})\leq M_{f,K}(\vec{p}_{J,K})+\varepsilon.\]
By combining the above inequalities, 
\begin{equation}\label{eq:triangleside1}
|M_{f,\infty}(\vec{q}_{J}) - M_{f,K}(\vec{p}_{J,K})|\leq \varepsilon.    
\end{equation}

By uniform convergence over $B$, since $\vec{p}_{J,K}\in B$ we have
\begin{equation}\label{eq:triangleside2}
  |M_{f,K}(\vec{p}_{J,K}) - M_{f,\infty}(\vec{p}_{J,K})|\leq \varepsilon  
\end{equation}
for $K\geq K_\varepsilon$.
Combining Equations Eq.~\eqref{eq:triangleside1} and Eq.~\eqref{eq:triangleside2} gives us,
\begin{equation}\label{eq:triangleside3}
|M_{f,\infty}(\vec{p}_{J,K}) - M_{f,\infty}(\vec{q}_J)|\leq 2\varepsilon.  
\end{equation}
An application of Lemma~\ref{lem:strongconvexitylemma} gives us the result.
\end{proof}

\section{A generalized reverse Sobolev inequality}
\label{appendix:recurrence_relations}
In this appendix, we give a proof of a generalized reverse Sobolev inequality (Proposition \ref{prop:mv-generalized_reverse_sobolev}), valid for polynomials (either multi-variable or single variable) under composition with a mapping $\mathbf{x}(t)\in C^s([a,b])$. The proof uses the approach taken in \cite{canuto1982approximation, canuto2007spectral} towards proving the standard reverse Sobolev inequality (see Proposition \ref{lem:polyinverseineq} for reference). 

The statement we seek to prove is the following:
\begin{proposition}[Generalized reverse Sobolev inequality]
\label{prop:mv-generalized_reverse_sobolev}
Suppose $\mathbf{x}\in C^s([a,b])$ and the rank assumption (assumption~(iv) in Section~\ref{subsec:assumption}) holds. If $u$ is a $J$-th degree polynomial, then there exists a constant $\tilde{C}$ such that
\begin{equation}
\|u\circ \mathbf{x}\|_{H^{s}([a,b])}\leq \tilde{C} J^{2s} \|u\circ \mathbf{x}\|_{L^2([a,b])}.
\end{equation}
\end{proposition}
This is a generalized version from the standard reverse Sobolev inequality for polynomials: 
\begin{proposition}[Polynomial reverse Sobolev inequality]\label{lem:polyinverseineq}
Suppose $u\in\mathbb{P}_J(\Omega)$, then there exists a constant $C$ such that, for $0\leq \nu \leq \mu$,
\begin{equation}
\|u\|_{H^\mu(\Omega)}\leq C J^{2(\mu-\nu)}\|u\|_{H^\nu(\Omega)}\:.
\end{equation}
\end{proposition}
\begin{proof}
The proof can be found in \cite[Lemma 2.4]{canuto1982approximation}. 
\end{proof}
In \cite{canuto1982approximation}, the standard reverse Sobolev inequality is proven by using some derived recurrence relations on the coefficients of a polynomial when expressed in a normalized Legendre polynomial basis. Using this specific polynomial basis, we get an inequality in terms of the coefficients which in turn is an inequality on the norms. Hence our proof rewrites Proposition \ref{lem:polyinverseineq} back in terms of the coefficients (Lemma \ref{lem:coeff-inequality}). We then get bounds on the derivatives of a polynomial $u$ composed with $\mathbf{x}$ using spectral bounds on an auxiliary matrix $\bm{M}_{\varphi, J, \mathbf{x}}$ related to $\bm{G}$ and Lemma \ref{lem:coeff-inequality}. The full inequality is then proven by expanding via multi-variable Fa\`{a} di Bruno formula (see \cite{fraenkel_1978}) and applying the assumption that $\mathbf{x}\in C^s([a,b])$ which provides bounds on the derivatives. 

To begin, we adopt the notation of \cite{canuto1982approximation} and state the relation between the expansion coefficients of a normalized Legendre series and the ones of its derivative.

\begin{notation}The symbol $\sum^{j_b}_{j=j_a}\,^\prime$ (with $j_b$ possibly infinite) will denote the summation over all integers $j$ such that $j_a\leq j \leq j_b$ and $j-j_a$ is even. 
\end{notation}

\begin{lemma}
    Let $\varphi_j(y) = \lambda_jL_j$ the $j$-th normalized Legendre polynomial defined on $[-1,1]$. Let $u(y) = \sum_{j=0}^\infty \hat{u}_{j}\varphi_j(y)$ and $\partial_y u(y) = \sum_{j=0}^\infty \hat{u}^{(1)}_{j}\varphi_j(y)$, then 
    \begin{equation}
    \label{eq:coeff_relation}
    \hat{u}_j^{(1)} = 2 \lambda_j \left(\sum^\infty_{p= j + 1}\,^\prime \lambda_p\hat{u}_p\right)\:.
    \end{equation}
\end{lemma}
\begin{proof}
The result follows from the equations contained Section 2.3.2 in \cite{canuto2007spectral} for the standard Legendre polynomials, which is then extended to the case of normalized Legendre polynomials in Equation (2.21) found in \cite{canuto1982approximation}.
\end{proof}

Using Equation~\eqref{eq:coeff_relation} in the multi-variable context allows for proving the following lemma. For the following lemma, we restate Proposition \ref{lem:polyinverseineq} in different terms. While Proposition \ref{lem:polyinverseineq} is written as an inequality of the polynomial norms, by orthonormality of the normalized Legendre polynomials, the proofs presented in \cite{canuto1982approximation, canuto2007spectral} can be interpreted as an inequality of the coefficients of a finite Legendre series (a polynomial) and the ones of its derivative. The lemma below is stated for multi-variable finite Legendre series with a multi-index indicated by a boldface letter. In the following, the notation $\hat{u}^{(q,i)}_{\bm{j}}$ refers to the coefficients of the $q$-th partial derivative of $u$ in the $i$-th coordinate and $|\bm{j}|_{\infty}$ refers to the max degree of a multi-index $\bm{j}$.
\begin{lemma}
\label{lem:coeff-inequality}
    Let $\varphi_{\bm{j}}(\bm{y})$ be a tensor product of $\lambda_{j_i} L_{j_i}(y)$ with $\lambda_{j_i} = \sqrt{j_i + \frac{1}{2}}$. Suppose that $u(\bm{y}) = \sum_{|\bm{j}|_{\infty}<J}\hat{u}_{\bm{j}} \varphi_{\bm{j}}(\bm{y})$ then 
    \begin{equation}
        \|D_i u\|^2_{L^2}\leq  CJ^4 \|u\|_{L^2}^2,
    \end{equation}
    i.e.,
    \begin{equation}
    \label{eq:first_derv_coeff}
    \sum_{|\bm{j}|_\infty <J} \hat{u}^{(1,i)}_{\bm{j}}\hat{u}^{(1,i)}_{\bm{j}}\leq C J^4 \sum_{|\bm{j}|_\infty <J} \hat{u}_{\bm{j}}\hat{u}_{\bm{j}}.
    \end{equation}
Further, 
 \begin{equation}
 \label{eq:q_derv_coeff}
 \sum_{|\bm{j}|_\infty <J} \hat{u}^{(q,i)}_{\bm{j}}\hat{u}^{(q,i)}_{\bm{j}}\leq C J^{4q} \sum_{|\bm{j}|_\infty <J} \hat{u}_{\bm{j}}\hat{u}_{\bm{j}}.
 \end{equation}
\end{lemma}
\begin{proof} As stated in \cite{canuto1982approximation} the result relies on the coefficient relations in Equation~\eqref{eq:coeff_relation} and follows the same structure as Lemma 2.1 in \cite{canuto1982approximation}. The general inequality \eqref{eq:q_derv_coeff} is a repeated application of \eqref{eq:first_derv_coeff}. 
\end{proof}
To extend the result to the case where the polynomial $u$ is composed with $\mathbf{x}$, we need the following definition. 
\begin{definition}
  Let $ \{\varphi_{\bm{j}}(\cdot)\}_{|\bm{j}|_{\infty}<J}$ be a basis of multi-variable normalized Legendre polynomials. Let $\bm{M}_{\varphi, J, \mathbf{x}}$ be the Gram matrix with entries given by 
\begin{equation}
    \bm{M}_{\varphi, J, \mathbf{x}}(\bm{j}, \bm{\ell}) = \ip{\varphi_{\bm{j}}(\mathbf{x}(t))}{\varphi_{\bm{\ell}}(\mathbf{x}(t))}_{L^2([a,b])},
\end{equation}     
    where some ordering has been placed on the multi-indices. 
\end{definition}
By definition of the Gram matrix $\bm{M}_{\varphi, J, \mathbf{x}}$, we have that if $u = \sum_{|\bm{j}|_{\infty}\leq J} \hat{u}_{j} \varphi_{\bm{j}}(\cdot)$ and $v =\sum_{|\bm{j}|_{\infty}\leq J} \hat{v}_{j} \varphi_{\bm{j}}(\cdot)$, then 
\begin{equation}
\ip{u\circ \mathbf{x}}{v\circ \mathbf{x}}_{L^2([a,b])} = \bm{u}^\top \bm{M}_{\varphi, J, \mathbf{x}} \bm{v},
\end{equation}
where $\bm{u}$ is a vector of coefficients $\hat{u}_{\bm{j}}$ and likewise for $\bm{v}$. In the following, we use the notation $C_{\mathbf{x}}\partial^q_i u:= (\partial^q_i u)\circ \mathbf{x}$ to avoid confusion with $\partial^q_i (u\circ \mathbf{x})$. 

\begin{lemma} 
\label{lem:matrix-derivative-bound}
Suppose that $\bm{M}_{\varphi, J, \mathbf{x}}$ is a positive definite matrix. If
$u(\bm{y}) = \sum_{|\bm{j}|_{\infty}<J}\hat{u}_{\bm{j}} \varphi_{\bm{j}}(\bm{y})$  and $\partial^q_i u(\bm{y}) = \sum_{|\bm{j}|_\infty <J} \hat{u}^{(q,i)}_{\bm{j}}\varphi_{\bm{j}}(\bm{y})$, then 
\begin{equation}
\|C_{\mathbf{x}} \partial^q_i u\|_{L^2([a,b])} \leq \sqrt{\frac{\lambda_{\text{max}}}{\lambda_{\text{min}}}} CJ^{2q}\|C_{\mathbf{x}} u\|_{L^2([a,b])},
\end{equation}
where $\lambda_{\text{max}}$ and $\lambda_{\text{min}}$ are the maximum and minimum eigenvalues of $\bm{M}_{\varphi, J, \mathbf{x}}$. 
\end{lemma}
\begin{proof} Let $\bm{u}$ and $\bm{u}_{(q,i)}$ be vectors with entries given by $\hat{u}_{\bm{j}}$ and $\hat{u}^{(q,i)}_{\bm{j}}$ respectively. By an application of Lemma \ref{lem:coeff-inequality} 
\begin{equation}
\|C_{\mathbf{x}} \partial^q_i u\|^2_{L^2([a,b])} = \bm{u}^\top_{(q,i)} \bm{M}_{\varphi, J, \mathbf{x}} \bm{u}_{(q,i)} \leq \lambda_{\text{max}} \bm{u}_{(q,i)}^\top \bm{u}_{(q,i)} \leq \lambda_{\text{max}} CJ^{4q}\bm{u}^\top\bm{u}.
\end{equation}
At the same time, 
\begin{equation}
\|C_{\mathbf{x}} u\|^2_{L^2([a,b])} = \bm{u}^\top \bm{M}_{\varphi, J, \mathbf{x}} \bm{u}
\geq \lambda_{\text{min}} \bm{u}^\top \bm{u}.
\end{equation}
Combining the above inequalities gives the intended result. 
\end{proof}

We now show that under the rank assumption (Section \ref{subsec:assumption}) we have that $\bm{M}_{\varphi, J, \mathbf{x}}$ is automatically positive definite. 
\begin{proposition}
\label{prop:M_posdef}
Under the rank assumption, $\bm{M}_{\varphi, J, \mathbf{x}}$ is positive definite. 
\end{proposition}
\begin{proof} Suppose $\bm{M}_{\varphi, J, \mathbf{x}}$ is not positive definite. There exists a nonzero vector $\bm{v}$ such that $\bm{v}^\top \bm{M}_{\varphi, J, \mathbf{x}}\bm{v} = 0$. Hence, $\|\nu\circ \mathbf{x}\|_{L^2([a,b])} = 0$ for $\nu(\bm{y}) = \sum_{\bm{j}} \hat{v}_{\bm{j}} \varphi_{\bm{j}}(\bm{y})$, where $\hat{v}_{\bm{j}}$ are the entries of $v$ and thus $\nu$ is a nonzero polynomial. Therefore, the existence of $\nu$ shows that the null space of $\bm{G}$ is non-empty and thus $\bm{G}$ cannot be full column rank which contradicts to the rank assumption. 
\end{proof} 

With the above in place, we can now prove the main result of this section. We first make a small remark on the strategy of the remaining part of our proof. 

\begin{remark} To get an idea of our proof strategy, consider the single variable version. We rewrite the derivative in terms of the composition operator. 
\begin{equation}
\frac{d}{dt}(u\circ x(t)) = C_{x}\left( \frac{d}{dy} u(y) \right) \dot{x}(t).\end{equation}
If we further differentiate we have
\begin{equation}
\frac{d^2}{dt^2}(u\circ x(t)) =  C_{x}\left( \frac{d}{dy} u(y) \right)  \ddot{x}(t) + C_{x}\left(\frac{d^2}{dy^2} u(y)\right)  (\dot{x}(t))^2.
\end{equation}
The generalization of the above is given by the multi-variable Fa\`{a} di Bruno formula. In the proof of Proposition~\ref{prop:mv-generalized_reverse_sobolev}, we bound the terms of the form $C_{\mathbf{x}} \partial^q_i u$ by applying Lemma \ref{lem:matrix-derivative-bound}, and the derivatives of $\mathbf{x}(t)$ are readily bounded from the regularity assumption of $\mathbf{x}(t)$.
\end{remark}

\begin{proof}[Proof of Proposition \ref{prop:mv-generalized_reverse_sobolev}] By applying the multi-variable Fa\`{a} di Bruno formula, we express $\frac{d^q}{dt^q}u(\mathbf{x})$ in terms of the sums and products of $C_{\mathbf{x}} \partial^q_i u$, $q\leq s$, and the time derivatives of $\mathbf{x}(t)$. Since each $x_i\in C^s([a,b])$, the time derivatives of $x_i$ are bounded over the compact interval. We apply both the bound in Lemma \ref{lem:matrix-derivative-bound} and the $L^\infty$ bound on the time derivatives of $x_i(t)$ for each $i$ to achieve the desired result when $s$ is an integer. This result can then be extended to the non-integer derivative case via standard interpolation arguments. 
\end{proof}

\bibliographystyle{plain}
\bibliography{refs}

\begin{thebibliography}{10}

\bibitem{brunton2016discovering}
Steven~L Brunton, Joshua~L Proctor, and J~Nathan Kutz.
\newblock Discovering governing equations from data by sparse identification of
  nonlinear dynamical systems.
\newblock {\em Proceedings of the National Academy of Sciences},
  113(15):3932--3937, 2016.

\bibitem{canuto2007spectral}
Claudio Canuto, M~Yousuff Hussaini, Alfio Quarteroni, and Thomas~A Zang.
\newblock {\em Spectral methods: fundamentals in single domains}.
\newblock Springer Science \& Business Media, 2007.

\bibitem{canuto1982approximation}
Claudio Canuto and Alfio Quarteroni.
\newblock Approximation results for orthogonal polynomials in sobolev spaces.
\newblock {\em Mathematics of Computation}, 38(157):67--86, 1982.

\bibitem{filters}
Alain {de Cheveigné} and Israel Nelken.
\newblock Filters: When, why, and how (not) to use them.
\newblock {\em Neuron}, 102(2):280--293, 2019.

\bibitem{dunford1988linear}
Nelson Dunford and Jacob~T Schwartz.
\newblock {\em Linear operators, part 1: general theory}, volume~10.
\newblock John Wiley \& Sons, 1988.

\bibitem{fraenkel_1978}
L.~E. Fraenkel.
\newblock Formulae for high derivatives of composite functions.
\newblock {\em Mathematical Proceedings of the Cambridge Philosophical
  Society}, 83(2):159–165, 1978.

\bibitem{harris2020array}
Charles~R. Harris, K.~Jarrod Millman, St{\'{e}}fan~J. van~der Walt, Ralf
  Gommers, Pauli Virtanen, David Cournapeau, Eric Wieser, Julian Taylor,
  Sebastian Berg, Nathaniel~J. Smith, Robert Kern, Matti Picus, Stephan Hoyer,
  Marten~H. van Kerkwijk, Matthew Brett, Allan Haldane, Jaime~Fern{\'{a}}ndez
  del R{\'{i}}o, Mark Wiebe, Pearu Peterson, Pierre G{\'{e}}rard-Marchant,
  Kevin Sheppard, Tyler Reddy, Warren Weckesser, Hameer Abbasi, Christoph
  Gohlke, and Travis~E. Oliphant.
\newblock Array programming with {NumPy}.
\newblock {\em Nature}, 585(7825):357--362, September 2020.

\bibitem{hastie2009elements}
Trevor Hastie, Robert Tibshirani, Jerome~H Friedman, and Jerome~H Friedman.
\newblock {\em The elements of statistical learning: data mining, inference,
  and prediction}, volume~2.
\newblock Springer, 2009.

\bibitem{holmes2012turbulence}
Philip Holmes, John~L Lumley, Gahl Berkooz, and Clarence~W Rowley.
\newblock {\em Turbulence, coherent structures, dynamical systems and
  symmetry}.
\newblock Cambridge university press, 2012.

\bibitem{kutz2016dynamic}
J~Nathan Kutz, Steven~L Brunton, Bingni~W Brunton, and Joshua~L Proctor.
\newblock {\em Dynamic mode decomposition: data-driven modeling of complex
  systems}.
\newblock SIAM, 2016.

\bibitem{leveque2002finite}
Randall~J LeVeque et~al.
\newblock {\em Finite volume methods for hyperbolic problems}, volume~31.
\newblock Cambridge university press, 2002.

\bibitem{ljung1998system}
Lennart Ljung.
\newblock System identification.
\newblock In {\em Signal analysis and prediction}, pages 163--173. Springer,
  1998.

\bibitem{luchtenburg2009introduction}
DM~Luchtenburg, BR~Noack, and M~Schlegel.
\newblock An introduction to the pod galerkin method for fluid flows with
  analytical examples and matlab source codes.
\newblock {\em Berlin Institute of Technology MB1, Muller-Breslau-Strabe}, 11,
  2009.

\bibitem{messenger2021weakpde}
Daniel~A Messenger and David~M Bortz.
\newblock Weak sindy for partial differential equations.
\newblock {\em Journal of Computational Physics}, 443:110525, 2021.

\bibitem{messenger2021weak}
Daniel~A Messenger and David~M Bortz.
\newblock Weak sindy: Galerkin-based data-driven model selection.
\newblock {\em Multiscale Modeling \& Simulation}, 19(3):1474--1497, 2021.

\bibitem{EEG}
Suresh Muthukumaraswamy.
\newblock High-frequency brain activity and muscle artifacts in meg/eeg: A
  review and recommendations.
\newblock {\em Frontiers in Human Neuroscience}, 7, 2013.

\bibitem{paulsen2016introduction}
Vern~I Paulsen and Mrinal Raghupathi.
\newblock {\em An introduction to the theory of reproducing kernel Hilbert
  spaces}, volume 152.
\newblock Cambridge University Press, 2016.

\bibitem{qian2022reduced}
Elizabeth Qian, Ionut-Gabriel Farcas, and Karen Willcox.
\newblock Reduced operator inference for nonlinear partial differential
  equations.
\newblock {\em SIAM Journal on Scientific Computing}, 44(4):A1934--A1959, 2022.

\bibitem{rosenfeld2022dynamic}
Joel~A Rosenfeld, Rushikesh Kamalapurkar, L~Gruss, and Taylor~T Johnson.
\newblock Dynamic mode decomposition for continuous time systems with the
  liouville operator.
\newblock {\em Journal of Nonlinear Science}, 32(1):1--30, 2022.

\bibitem{SCC.Rosenfeld.Kamalapurkar.ea2019a}
Joel~A. Rosenfeld, Rushikesh Kamalapurkar, Benjamin Russo, and Taylor~T.
  Johnson.
\newblock Occupation kernels and densely defined {L}iouville operators for
  system identification.
\newblock In {\em Proc. IEEE Conf. Decis. Control}, pages 6455--6460, December
  2019.

\bibitem{rosenfeld2019occupation}
Joel~A Rosenfeld, Benjamin Russo, Rushikesh Kamalapurkar, and Taylor~T Johnson.
\newblock The occupation kernel method for nonlinear system identification.
\newblock {\em arXiv preprint arXiv:1909.11792}, 2019.

\bibitem{russo2022liouville}
Benjamin~P Russo and Joel~A Rosenfeld.
\newblock Liouville operators over the hardy space.
\newblock {\em Journal of Mathematical Analysis and Applications},
  508(2):125854, 2022.

\bibitem{sirovich1987turbulence}
Lawrence Sirovich.
\newblock Turbulence and the dynamics of coherent structures. i. coherent
  structures.
\newblock {\em Quarterly of applied mathematics}, 45(3):561--571, 1987.

\bibitem{smagorinsky1963general}
Joseph Smagorinsky.
\newblock General circulation experiments with the primitive equations: I. the
  basic experiment.
\newblock {\em Monthly weather review}, 91(3):99--164, 1963.

\bibitem{tibshirani1996regression}
Robert Tibshirani.
\newblock Regression shrinkage and selection via the lasso.
\newblock {\em Journal of the Royal Statistical Society: Series B
  (Methodological)}, 58(1):267--288, 1996.

\bibitem{2020SciPy-NMeth}
Pauli Virtanen, Ralf Gommers, Travis~E. Oliphant, Matt Haberland, Tyler Reddy,
  David Cournapeau, Evgeni Burovski, Pearu Peterson, Warren Weckesser, Jonathan
  Bright, St{\'e}fan~J. {van der Walt}, Matthew Brett, Joshua Wilson, K.~Jarrod
  Millman, Nikolay Mayorov, Andrew R.~J. Nelson, Eric Jones, Robert Kern, Eric
  Larson, C~J Carey, {\.I}lhan Polat, Yu~Feng, Eric~W. Moore, Jake
  {VanderPlas}, Denis Laxalde, Josef Perktold, Robert Cimrman, Ian Henriksen,
  E.~A. Quintero, Charles~R. Harris, Anne~M. Archibald, Ant{\^o}nio~H. Ribeiro,
  Fabian Pedregosa, Paul {van Mulbregt}, and {SciPy 1.0 Contributors}.
\newblock {{SciPy} 1.0: Fundamental Algorithms for Scientific Computing in
  Python}.
\newblock {\em Nature Methods}, 17:261--272, 2020.

\bibitem{WEI1990177}
Musheng Wei.
\newblock {P}erturbation of the least squares problem.
\newblock {\em Linear Algebra and its Applications}, 141:177--182, 1990.

\bibitem{convergence_of_SINDy}
Linan Zhang and Hayden Schaeffer.
\newblock On the convergence of the sindy algorithm.
\newblock {\em Multiscale Modeling \& Simulation}, 17(3):948--972, 2019.

\end{thebibliography}

\end{document}